\DeclareMathOperator{\poly}{poly}
\DeclareMathOperator{\id}{id}
\DeclareMathOperator{\pr}{pr}
\DeclareMathOperator{\Der}{Der}
\DeclareMathOperator{\Bott}{{Bott}}
\DeclareMathOperator{\tot}{tot}
\newcommand{\xto}[1]{\xrightarrow{#1}}
\newcommand{\argument}{-}
\newcommand{\XX}{\mathscr{X}}
\newcommand{\XXa}{\XX}
\newcommand{\CC}{\mathbb{C}}
\newcommand{\ZZ}{\mathbb{Z}}
\newcommand{\kk}{\mathbb{K}}
\newcommand{\abs}[1]{\left\vert #1\right\vert}
\newcommand{\lie}[2]{\left[#1,#2\right]}
\newcommand{\pairing}[2]{\left\langle #1,#2\right\rangle}
\newcommand{\cinf}[1]{C^{\infty}(#1)}
\newcommand{\shuffle}[2]{\mathrm{Sh}({#1},{#2})}
\newcommand{\sections}[1]{\Gamma(#1)}
\newcommand{\mfg}{\mathfrak{g}}
\newcommand{\mfk}{\mathfrak{K}}
\newcommand{\frkg}{\mathfrak{g}}
\newcommand{\frkh}{\mathfrak{h}}
\newcommand{\anchor}{\rho}
\newcommand{\cL}{{\mathcal{L}}}
\newcommand{\cM}{\mathcal{M}}
\newcommand{\cF}{\mathcal{F}}
\newcommand{\OmegaAwedgeBs}[2]{\Omega_A^{#1}(\Lambda^{#2} B^\vee)}
\newcommand{\degree}[1]{\abs{#1}}
\newcommand{\totalabs}[1]{\parallel#1\parallel}
\newcommand{\LieDer}{\mathcal{L}}
\newcommand{\set}[1]{\left\{#1\right\}}
\newcommand{\minuspower}[1]{(-1)^{#1}}
\newcommand{\Sbullet}{S}
\newcommand{\quotientmapLB}{\pr_{\moduleB}}
\newcommand{\LADL}{{L}}
\newcommand{\LADA}{{A}}
\newcommand{\baL}[2]{[#1,#2]_{\LADL}}
\newcommand{\baA}[2]{[#1,#2]_{\LADA}}
\newcommand{\baBtoA}[2]{ \beta(#1,#2)}
\newcommand{\baB}[2]{[#1,#2]_{\scriptscriptstyle \moduleB}}
\newcommand{\baABtoB}[2]{\nabla^{\Bott}_{#1}{#2}}
\newcommand{\baBAtoA}[2]{\Delta_{#1}{#2}}
\newcommand{\anchorL}{\rho_{\scriptscriptstyle{L}}}
\newcommand{\anchorA}{\rho_{\scriptscriptstyle{A}}}
\newcommand{\anchorB}{\rho_{\scriptscriptstyle{B}}}
\newcommand{\moduleB}{{B}}
\newcommand{\embeddingi}{\mathrm{i}}
\newcommand{\LADAs}{{A}^\vee}
\newcommand{\moduleBs}{{B}^\vee}
\newcommand{\LADLs}{{L}^\vee}
\newcommand{\baseM}{M}
\newcommand{\splitting}{\mathrm{j}}
\newcommand{\rond}{\circ}
\newcommand{\dA}{d_{\LADA}}
\newcommand{\OmegaAdouble}[2]{{\Omega_{\LADA}^{#1}}{(\Lambda^{#2}\moduleB)}}
\newcommand{\binarybracket}[2]{\{#1,#2\}}
\newcommand{\trinarybracket}[3]{\{#1,#2,#3\}}
\newcommand{\Koszul}[1]{\epsilon(#1)}
\newcommand{\OmegaAsingle}[1]{\Omega_{\LADA}^{#1}}
\newcommand{\algdpbracket}[1]{\lambda_l\bigl(#1\bigr)}
\newcommand{\alphapminusone}{\rho_{l-1}}
\newcommand{\OmegaAB}{\Omega^\bullet_{\LADA}(\moduleB)}
\newcommand{\OmegaAwedgeB}{\Omega^\bullet_{\LADA}(\Lambda^\bullet \moduleB)}
\newcommand{\dL}{d_{\LADL}}
\newcommand{\QL}{Q_L}
\newcommand{\dAB}{d_{\LADA}^{\Bott}}
\newcommand{\structuresheaf}[1]{C^\infty_{#1}}
\newcommand{\qB}{\pr_{\moduleB}}
\newcommand{\pA}{\pr_{\LADA}}
\newcommand{\dbeta}{d_{\beta}}
\newcommand{\dBA}{d_{\moduleB}^{\Delta}}
\newcommand{\anadelta}{\Delta}
\newcommand{\hnatural}{h_\natural}
\newcommand{\sigmanatural}{\sigma_\natural}
\newcommand{\taunatural}{\tau_\natural}
\newcommand{\smiletaunatural}{\breve{\tau}_\natural}
\newcommand{\smilehnatural}{\breve{h}_\natural}
\newcommand{\contraction}[1]{{\iota}_{#1}}
\newcommand{\productinA}{ \cdot_A}
\newcommand{\productinB}{\cdot_B}
\newcommand{\LieDerivative}{{L}}
\newcommand{\barlambda}{\overline{\lambda}}
\newcommand{\cE}{\mathcal{E}}
\newcommand{\diffopoo}[1]{\mathscr{D}^{\leqslant 1}(#1)}
\newcommand{\dual}{^\vee}
\newcommand{\into}{\hookrightarrow}
\newcommand{\onto}{\twoheadrightarrow}
\newcommand{\etendu}[1]{#1_{\natural}}
\newcommand{\dgpoly}{\big(\tot \Omega^{\bullet}_A(\Lambda^\bullet(L/A)),\dAB\big)}
\newcommand{\frakg}{\mathfrak{g}}
\newcommand{\frakh}{\mathfrak{h}}
\newcommand{\KK}{\mathbb{K}}
\newcommand{\RR}{\mathbb{R}}
\newcommand{\XXX}{\mathscr{X}}
\newcommand{\Fedosov}{\cM}
\newcommand{\Cddd}[3]{C^{(#1,#2,#3)}}
\newcommand{\hypercohomology}{\mathbb{H}}
\begin{document}

\title{Shifted derived Poisson manifolds associated with Lie pairs}

\author{Ruggero Bandiera}
\address{Dipartimento di Matematica, Sapienza Università di Roma}
\email{bandiera@mat.uniroma1.it}

\author{Zhuo Chen}
\address{Department of Mathematics, Tsinghua University}
\email{chenzhuo@tsinghua.edu.cn}
\thanks{Research partially supported by NSFC grant 11471179.}

\author{Mathieu Stiénon}
\address{Department of Mathematics, Pennsylvania State University}
\email{stienon@psu.edu}

\author{Ping Xu}
\address{Department of Mathematics, Pennsylvania State University}
\email{ping@math.psu.edu}
\thanks{Research partially supported by NSF grants DMS-1406668 and DMS-1707545}

\begin{abstract}
We study the shifted analogue of the ``Lie--Poisson'' construction for $L_\infty$ algebroids and we prove that any $L_\infty$ algebroid naturally gives rise to shifted derived Poisson manifolds. We also investigate derived Poisson structures from a purely algebraic perspective and, in particular, we establish a homotopy transfer theorem for derived Poisson algebras. 

As an application, we prove that, given a Lie pair $(L,A)$, the space $\operatorname{tot}\Omega^{\bullet}_A(\Lambda^\bullet(L/A))$ admits a degree $(+1)$ derived Poisson algebra structure with the wedge product as associative multiplication and the Chevalley--Eilenberg differential $d_A^{\operatorname{Bott}}:\Omega^{\bullet}_A(\Lambda^\bullet(L/A))\to \Omega^{\bullet +1}_A(\Lambda^\bullet(L/A))$ as unary $L_\infty$ bracket. This degree $(+1)$ derived Poisson algebra structure on $\operatorname{tot}\Omega^{\bullet}_A(\Lambda^\bullet(L/A))$ is unique up to an isomorphism having the identity map as first Taylor coefficient. Consequently, the Chevalley--Eilenberg hypercohomology $\mathbb{H}(\Omega^{\bullet}_A(\Lambda^\bullet(L/A)),d_A^{\operatorname{Bott}})$ admits a canonical Gerstenhaber algebra structure.
\end{abstract}

\maketitle

\tableofcontents

\section{Introduction}

The notion of \emph{Lie pair} is a natural framework encompassing a range of diverse geometric contexts
including complex manifolds, foliations, and $\frakg$-manifolds (that is, manifolds endowed with an action of a Lie algebra $\frakg$).
By a \emph{Lie pair} $(L,A)$, we mean an inclusion $A\hookrightarrow L$ of Lie $\KK$-algebroids over a smooth manifold $M$.
(Throughout the paper, we use the symbol $\KK$ to denote either of
the fields $\RR$ or $\CC$.)
Recall that a \emph{Lie $\KK$-algebroid} is a $\KK$-vector bundle $L\to M$,
whose space of sections is endowed with a Lie bracket $[\argument,\argument]$,
together with a bundle map $\rho: L\to T_M\otimes_{\RR}\KK$ called 
\emph{anchor}
such that $\rho: \sections{L}\to\XX(M)\otimes_{\RR}\KK$ is
a morphism of Lie algebras
and $[X,fY]=f[X,Y]+\big(\rho(X)f\big)Y$ for all $X,Y\in\Gamma(L)$ and $f\in C^\infty(M,\KK)$.
In other words, a $\KK$-vector bundle $L\to M$ is a Lie $\KK$-algebroid if and only if $\sections{L}$ is
a \emph{Lie--Rinehart $\KK$-algebra} \cite{MR0154906} over the commutative ring $C^\infty(M, \KK)$.
A Lie pair over the one-point space $M=\{*\}$ is simply a pair of Lie algebras $(\frakg, \frakh)$ with an inclusion of $\frakh$ into $\frakg$.

Given a Lie pair $(L,A)$, the quotient $L/A$ is naturally an $A$-module: $\nabla^{\Bott}_a b=q([a,l])$,
where $a\in\sections{A}$, $b\in\sections{L/A}$, $q$ denotes the projection $L\onto L/A$, and $l$ is any element of $\sections{L}$ such that $q(l)=b$.
The flat $A$-connection $\nabla^{\Bott}$ on $L/A$ is known as the Bott connection \cite{MR0362335}.

Let $\XXX_A$ and $\XXX_L$ denote the differentiable stacks determined
by the local Lie groupoids integrating the Lie algebroids $A$ and $L$,
respectively. 
The dg algebra $\dgpoly$, where
\[ \tot \Omega^{\bullet}_A(\Lambda^\bullet(L/A))=\bigoplus_{k, l}\Omega^k_A (\Lambda^l (L/A)[-l])[k] ,\]
can be regarded as the space of formal polyvector
fields tangent to the fibers of the differentiable stack 
fibration $\XXX_A\to\XXX_L$. 
For instance, the dg algebra $\big(\tot 
\Omega^{\bullet}_F (\Lambda^\bullet (T_M/T_F)) , d_F^{\Bott} \big)$ associated with the Lie pair $(T_M,T_F)$
encoding a foliation $F$ of a smooth manifold $M$ may be thought of as the space of formal polyvector fields
on the differentiable stack determined by the holonomy groupoid of the foliation $F$.
Therefore, it is natural to ask
whether the dg algebra $\dgpoly$ admits a ``$(+1)$-shifted Lie bracket'' compatible with the wedge product
--- an analogue of the \emph{Schouten bracket} on the polyvector fields of a smooth manifold.
The answer is negative; such a bracket does not exist for arbitrary Lie pairs.
However, it turns out that, for every Lie pair $(L,A)$, there does exist a $(+1)$-shifted $L_\infty$ algebra structure 
on $\tot \Omega^{\bullet}_A(\Lambda^\bullet(L/A))$,
which is compatible with the wedge product in the sense that all its higher brackets satisfy the graded Leibniz rule.
This is what we call a \emph{degree $(+1)$ derived Poisson algebra}, i.e.\ a ``Gerstenhaber algebra up to homotopy.''

The main theorem of the paper can be summarized as follows:

\begin{theorem}\label{thm: main1}
Given any Lie pair $(L,A)$, the space $\tot \Omega^{\bullet}_A (\Lambda^\bullet (L/A))$ admits a structure of degree $(+1)$ derived Poisson
algebra, with the wedge product as associative multiplication and the Chevalley--Eilenberg differential
\[ \dAB: \Omega^{\bullet}_A (\Lambda^\bullet (L/A)) \to\Omega^{\bullet +1}_A (\Lambda^\bullet (L/A)) \] as unary $L_\infty$ bracket 
--- the $A$-module structure on $\Lambda^{\bullet}(L/A)$ is the natural extension of the Bott $A$-connection on $L/A$.
This degree $(+1)$ derived Poisson algebra structure is unique up to an isomorphism of degree $(+1)$ derived Poisson algebras having the identity map as first Taylor coefficient.
\end{theorem}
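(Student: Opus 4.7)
The plan is to prove existence by applying the shifted Lie--Poisson construction to an $L_\infty$ algebroid built from a splitting of the Lie pair, and to derive uniqueness from the homotopy transfer theorem for derived Poisson algebras established earlier in the paper.

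For existence, I would fix a splitting $\splitting\colon L/A\to L$ of the short exact sequence $0\to A\to L\to L/A\to 0$ of vector bundles and identify $L\cong A\oplus B$ with $B\cong L/A$. Reading off the Lie bracket on $\sections{L}$ in this decomposition produces the bracket on $\sections{A}$, the Bott $A$-connection on $B$ extended through $\splitting$ to an $L$-connection $\nabla$ on $B$, and two curvature-type tensors measuring how $\splitting$ fails to be an algebroid morphism. After placing $B$ in degree $+1$, these data assemble into an $L_\infty$ algebroid structure on $A\oplus B[-1]$. Applying the shifted Lie--Poisson theorem for $L_\infty$ algebroids, established earlier in the paper, to this $L_\infty$ algebroid then yields a degree $(+1)$ derived Poisson structure on the associated space of ``formal fibrewise polyvector fields,'' which is precisely $\tot\Omega^{\bullet}_A(\Lambda^\bullet(L/A))$ with wedge product as multiplication and unary bracket $\dAB$.

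For uniqueness, suppose two degree $(+1)$ derived Poisson structures on $\tot\Omega^{\bullet}_A(\Lambda^\bullet(L/A))$ share the wedge product and the unary bracket $\dAB$. I would build an $L_\infty$ isomorphism between them by induction on the arity of Taylor coefficients, taking the first coefficient to be the identity and each subsequent coefficient to be a multi-derivation of the wedge product. The obstruction to extending the truncation at each stage is a cocycle in a complex of multi-derivations of $\OmegaAwedgeB$ with differential induced by $\dAB$; the homotopy transfer theorem for derived Poisson algebras, together with an explicit contracting homotopy coming from the splitting $\splitting$, guarantees that this cocycle is a coboundary. Specializing this argument to compare the structures arising from two different splittings then shows that the existence construction produces a derived Poisson structure that is canonical up to $L_\infty$ isomorphism with identity first Taylor coefficient.

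The main obstacle will be ensuring that the correction terms in the uniqueness induction remain multi-derivations of the wedge product, rather than arbitrary multilinear operators; this Leibniz compatibility is precisely what distinguishes a morphism of derived Poisson algebras from a bare $L_\infty$ morphism, and it is what the homotopy transfer theorem in the derived Poisson category is engineered to provide. The existence part, while conceptually clean given the earlier general results, still requires one to verify that the transported structure on $A\oplus B[-1]$ indeed satisfies the $L_\infty$ algebroid axioms---this is the Fedosov-like step of the argument.
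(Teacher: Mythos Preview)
Your existence argument is essentially the paper's first construction (Proposition~\ref{Prop: main1} and Proposition~\ref{Prop: main1continue}): a splitting turns $A[1]\oplus B\to A[1]$ into an $L_\infty$ algebroid, and the shifted Lie--Poisson construction (Proposition~\ref{Thm: LinfinitytotshiftedPoisson}) then yields the derived Poisson structure on $\tot\Omega^\bullet_A(\Lambda^\bullet B)$. Two small corrections: the $L_\infty$ algebroid lives over the graded base $A[1]$, not over $M$ as your notation ``$A\oplus B[-1]$'' suggests; and no Fedosov step is needed here --- the verification that this is an $L_\infty$ algebroid is the one-line observation that the Chevalley--Eilenberg vector field $Q_L$ on $L[1]\cong A[1]\oplus B[1]$ is tangent to the zero section $A[1]$, since $A$ is a subalgebroid.

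Your uniqueness argument, however, diverges from the paper and has a real gap. The paper does \emph{not} run an obstruction-theoretic induction. Instead it recasts the problem via Courant algebroids: $D=A\oplus A^\perp$ is a Dirac structure in $E=L\oplus L^\vee$, and a splitting of $0\to A\to L\to L/A\to 0$ is the same data as a transversal almost Dirac structure $C\cong B\oplus A^\vee$. The derived Poisson structure from your existence step coincides with the one controlling deformations of $D$ relative to $C$ (Proposition~\ref{pro: unique}). Uniqueness then comes for free from \v{S}evera's result (Proposition~\ref{Prop: cannonicalLinfyDiracDual}): two transversals differ by some $\pi\in\Gamma(\Lambda^2 D)$, and $\exp(\delta_\pi)$ --- with $\delta_\pi$ an explicit biderivation --- is the required isomorphism of derived Poisson algebras with identity first Taylor coefficient.

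The problem with your proposed induction is that you have not identified a complex in which the obstructions live, nor a reason they vanish. The homotopy transfer theorem (Theorem~\ref{Bandiera: homotopytransfer}) transfers a derived Poisson structure along a semifull algebra contraction between \emph{two different} dg algebras; it does not, as stated, compare two derived Poisson structures on the \emph{same} algebra sharing only their unary bracket. And the splitting $\splitting$ gives a contraction of vector bundles, not a contracting homotopy on any complex of multiderivations of $\tot\Omega^\bullet_A(\Lambda^\bullet B)$. Without supplying that complex and that homotopy, the inductive step cannot be closed.
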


As an immediate consequence, we obtain the following

\begin{theorem}\label{thm: main2}
Given any Lie pair $(L, A)$, the Chevalley--Eilenberg hypercohomology \[ \mathbb{H} ( \Omega^{\bullet}_A (\Lambda^\bullet (L/A)),\dAB) \]
admits a canonical Gerstenhaber algebra structure.
\end{theorem}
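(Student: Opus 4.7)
The plan is to deduce Theorem~\ref{thm: main2} directly from Theorem~\ref{thm: main1} by descending the derived Poisson algebra structure on $V := \tot \Omega^{\bullet}_A(\Lambda^\bullet(L/A))$ to the cohomology of its unary bracket $\dAB$. By Theorem~\ref{thm: main1}, $V$ carries a degree $(+1)$ derived Poisson algebra structure consisting of the wedge product, the differential $\dAB$, a binary bracket $\{-,-\}_2$, and higher brackets $\{-,\ldots,-\}_k$ (for $k\geq 3$) satisfying the $L_\infty$ relations; each bracket is a graded multiderivation of the wedge product. Since $\mathbb{H}(\Omega^{\bullet}_A(\Lambda^\bullet(L/A)),\dAB) = H^\bullet(V,\dAB)$ by definition, it suffices to show that the wedge product and the binary bracket descend to give a Gerstenhaber algebra on this cohomology.

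The first step is to descend the operations. The wedge product descends to $H^\bullet(V,\dAB)$ as a graded commutative associative product since $\dAB$ is a derivation. The arity-two $L_\infty$ relation asserts that $\dAB$ is a strict derivation of $\{-,-\}_2$, so this bracket passes to a well-defined bilinear pairing on $H^\bullet(V,\dAB)$. The biderivation property relating $\{-,-\}_2$ and the wedge product holds already at the chain level, and hence descends directly, yielding the Gerstenhaber Leibniz rule. The graded Jacobi identity requires more care: on $V$ the Jacobiator of $\{-,-\}_2$ is not zero, but by the arity-three $L_\infty$ relation it equals a $\dAB$-boundary plus terms of the form $\{-,-,-\}_3$ applied to $\dAB$-images. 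When evaluated on $\dAB$-cocycles and read modulo $\dAB$-boundaries, all such obstruction terms vanish, so the induced bracket on cohomology satisfies Jacobi strictly, producing a Gerstenhaber algebra.

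The main obstacle is not existence but canonicity. To establish this, I would invoke the uniqueness clause of Theorem~\ref{thm: main1}: any two admissible degree $(+1)$ derived Poisson algebra structures on $V$ are related by an $L_\infty$-isomorphism whose first Taylor coefficient is the identity. This identity first coefficient guarantees that the isomorphism acts as the identity on $H^\bullet(V,\dAB)$, while its higher Taylor coefficients relate the respective binary brackets modulo $\dAB$-exact terms. Consequently, the two induced Lie brackets on hypercohomology coincide, and the Gerstenhaber structure on $\mathbb{H}(\Omega^{\bullet}_A(\Lambda^\bullet(L/A)),\dAB)$ is indeed canonical.
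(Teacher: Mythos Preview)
Your proposal is correct and follows essentially the same route as the paper: deduce the Gerstenhaber structure on cohomology from the degree~$(+1)$ derived Poisson algebra of Theorem~\ref{thm: main1} (the paper packages your ``descend to cohomology'' argument as Proposition~\ref{Prop: kshiftedPinfinitygiveskshiftedPoisson}), and obtain canonicity from the uniqueness clause by observing that an $L_\infty$ isomorphism with identity linear part induces the identity on cohomology and intertwines the binary brackets modulo $\dAB$-exact terms.
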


In \cite{BSX:17}, an $L_\infty [1]$ algebra structure on 
$\tot\Omega^{\bullet}_A (\Lambda^\bullet (L/A))$ 
was constructed explicitly via Fedosov dg Lie algebroids --- see Section~\ref{pineapple}.
Its construction relies on the choice of additional geometric data: 
a splitting of the short exact sequence
$0\to\LADA\to\LADL\to L/A\to 0$ 
and a torsion-free $L$-connection $\nabla$ on $L/A$ extending the Bott $A$-connection.
A similar construction for polydifferential operators rather than polyvector fields was 
described in \cite{BSX:17} as well.
Understanding the extent to which the resulting $L_\infty[1]$ algebra structures on 
$\tot \Omega^{\bullet}_A (\Lambda^\bullet (L/A))$ and its polydifferential operator counterpart
depend on the geometric data chosen is an issue that was addressed in~\cite{BSX:17}.

In this paper, we propose a more direct approach to the two-fold problem of existence and uniqueness 
of a structure of derived Poisson algebra of degree $(+1)$ on $\tot \Omega^{\bullet}_A (\Lambda^\bullet (L/A))$:
we describe two new ways of constructing such structures --- one involves $L_\infty$ algebroids while the other 
involves deformations of Dirac structures --- and we prove that all three approaches yield exactly the same 
degree $(+1)$ derived Poisson algebra structure on $\tot \Omega^{\bullet}_A (\Lambda^\bullet (L/A))$ --- see Section~\ref{melon}.
The uniqueness of the degree $(+1)$ derived Poisson algebra structure in Theorem~\ref{thm: main1}
then follows from a standard result of \v{S}evera on deformations of Dirac structures \cite{arXiv:1707.00265,Severa_private_communication}.

Let us briefly recall the construction via deformations of Dirac structures.
The deformation of Dirac structures were investigated about 15 years ago by \v{S}evera, Roytenberg, and many others \cite{arXiv:1707.00265,MR2699145}.
Given a Courant algebroid $E$ of signature $(n,n)$, the deformations of a Dirac structure $D$ in $E$ are governed
by an $L_\infty$ algebra structure on $\Gamma(\Lambda^\bullet D^\vee)$,
which is in fact a degree $(-1)$ derived Poisson algebra unique up to isomorphism \cite{arXiv:1707.00265}. 
Here $\Gamma(\Lambda^\bullet D^\vee) =\bigoplus_l \Gamma(\Lambda^l D^\vee)[-l]$. 
Now, given a Lie pair $(L, A)$, it is well known that $E=L\oplus L^\vee$ is a Courant algebroid of signature $(n,n)$
and $D=A\oplus A^\perp$ is a Dirac structure in $E$ \cite{MR1472888}.
It is easy to see that the isomorphism 
$\Gamma(\Lambda^m D^\vee)\cong\bigoplus_{k+l=m} \Omega^{k}_A (\Lambda^l (L/A))$
identifies the first $L_\infty$ bracket on $\Gamma(\Lambda^\bullet D^\vee)$ with the Chevalley--Eilenberg differential
$\dAB: \Omega^{\bullet}_A (\Lambda^\bullet (L/A))\to\Omega^{\bullet +1}_A (\Lambda^\bullet (L/A))$.
Thus one obtains a degree $(-1)$ derived Poisson algebra structure on the dg algebra
$\big(\bigoplus_{k+l= \bullet} \Omega^{k}_A (\Lambda^l (L/A))[-k-l],
\dAB\big)$.
Shifting the graduation, we obtain a structure of derived
 Poisson algebra of degree $(+1)$ on
$\tot \Omega^{\bullet}_A(\Lambda^\bullet(L/A))=\bigoplus_{k, l}\Omega^k_A (\Lambda^l (L/A))[-k+l]$. 

Next, we proceed to outline the construction of the degree $(+1)$ derived Poisson algebra structure 
on $\tot \Omega^{\bullet}_A (\Lambda^\bullet (L/A))$ via $L_\infty$ algebroids.

An $L_\infty$ algebroid is a vector bundle $\cL\to\cM$
of $\ZZ$-graded manifolds endowed with (1) a sequence $(\lambda_l)_{l\geq 1}$
of maps $\lambda_l: \Lambda^l\sections{\cL}\to\sections{\cL}[2-l]$, called multi-brackets, that determine a structure of $L_\infty$ algebra on $\sections{\cL}$
and (2) a sequence $(\rho_l)_{l\geq 0}$ of bundle maps $\rho_l: \Lambda^l\cL\to T_{\cM}\otimes_{\RR}\KK[1-l]$, called anchor maps,
that determine a morphism of $L_\infty$ algebras from $\sections{\cL}$ to
$\XX(\cM)\otimes_{\RR}\KK$.
The $L_\infty$-brackets $\lambda_l$ and the anchor maps $\rho_l$ must satisfy the usual compatibility condition \cite{MR2103009,MR2840338,MR3277952}.

There exists an equivalent and more compact definition of $L_\infty$ algebroids \`a la Va\u{\i}ntrob via dg manifolds \cite{MR1480150}, which we will 
recall briefly.
A dg manifold is a $\ZZ$-graded manifold $\cM$ together with a homological vector field, i.e.\ vector field $Q\in\XX(\cM)$ of degree $(+1)$ satisfying $Q^2=0$.
An $L_\infty$ algebroid is a vector bundle $\cL\to \cM$ of $\ZZ$-graded manifolds together with a homological vector field $Q$ on $\cL[1]$ 
tangent to the zero section $\cM\subset\cL[1]$ --- see Proposition~\ref{Prop: LinfinityalgebroidandQ}.

It turns out that $L_\infty$ algebroids are closely related to \emph{shifted derived $C^\infty$-Poisson manifolds} in the sense of Pridham \cite{MR3653066}.
Let $\hat{\XX}^\bullet_{\poly}(\cM,n)$ denote the completion of the space of $n$-shifted polyvector fields on $\cM$ --- see Appendix
\ref{Appendix: shiftedpoly} for details.
A $(-k)$-shifted derived Poisson manifold (see Definition \ref{Def: -kshiftedDPM}) can be thought of as a dg manifold $(\cM,Q)$
equipped with a formal series $\pi=\sum_{l= 2}^\infty \pi_l$ of $(k-2)$-shifted polyvector fields,
with $\pi_l\in \hat{\XX}^l_{\poly}(\cM,k-2)$ of degree $(+1)$ in $\hat{\XX}^\bullet_{\poly}(\cM,k-2) [k-1]$,
satisfying the Maurer--Cartan equation $[Q,\pi]+\frac{1}{2}[\pi,\pi]=0$.
The well known ``Lie--Poisson'' construction admits the following analogue in the ``shifted derived'' context.

\begin{theorem}\label{mower}
Let $\cL\to \cM$ be a vector bundle of $\ZZ$-graded manifolds,
and let $k\in\mathbb{Z}$ be a fixed integer.
The following statements are equivalent.
\begin{enumerate}
\item The vector bundle $\cL\to \cM$ is an $L_\infty$ algebroid.
\item The space $\sections{\hat{ \Sbullet }(\cL [k])}$ is a degree $k$ derived Poisson algebra with $l$-th bracket of weight $(1-l)$.
\item The graded manifold $ \cL^\vee [-k]$ is a $(-k)$-shifted derived Poisson
manifold and the weight of the $l$th-bracket on $\cinf{ \cL^\vee[-k] }$ is $(1-l)$.
\end{enumerate}
\end{theorem}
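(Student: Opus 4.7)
The plan is to deduce the theorem from the classical Lie--Poisson correspondence between Lie algebroid structures on $L\to M$ and linear Poisson structures on $L^\vee$, extended to the graded, shifted, derived setting. I would first establish $(2)\Leftrightarrow(3)$ via a canonical identification of function algebras, and then $(1)\Leftrightarrow(3)$ via Va\u{\i}ntrob's dg-manifold description of $L_\infty$ algebroids recalled in Proposition~\ref{Prop: LinfinityalgebroidandQ}. The case $k=0$, with $\cM$ an ordinary manifold and $\cL$ concentrated in degree zero, recovers the classical statement.

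For $(2)\Leftrightarrow(3)$: the pairing $\cL\otimes\cL^\vee\to\kk$ yields a canonical identification of graded commutative algebras
$$\sections{\hat{S}(\cL[k])}\cong \cinf{\cL^\vee[-k]},$$
in which the symmetric-algebra weight on the left matches the polynomial degree in the fiber coordinates of $\cL^\vee[-k]$ on the right. An $l$-ary bracket of weight $(1-l)$ decreases polynomial fiber degree by $(l-1)$, hence preserves the polynomial subalgebra $\sections{\hat{S}(\cL[k])}\subset\cinf{\cL^\vee[-k]}$. Conversely, any system of brackets with these weight constraints on the polynomial subalgebra extends uniquely by continuity to the completed algebra of functions. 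The higher Jacobi relations are verbatim identical on the two sides, so (2) and (3) encode the same structure.

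For $(1)\Leftrightarrow(3)$: an $L_\infty$ algebroid structure on $\cL$ amounts to a homological vector field $Q$ on $\cL[1]$ vanishing along the zero section $\cM\hookrightarrow\cL[1]$. Decompose $Q=\sum_{l\geq 1} Q_l$ according to polynomial degree in the fiber coordinates of $\cL[1]$; the vanishing of a constant term $Q_0$ expresses tangency to $\cM$, and each $Q_l$ encodes jointly the $l$-ary bracket $\lambda_l$ and the anchor $\rho_l$ through the standard correspondence between derivations of $\sections{\hat{S}(\cL^\vee[-1])}$ and brackets-plus-anchors. Dualizing via $\cL\otimes\cL^\vee\to\kk$, each $Q_l$ transposes to an $l$-ary shifted polyvector field $\pi_l$ on $\cL^\vee[-k]$ of weight $(1-l)$ and of total degree $(+1)$ in $\hat{\XX}^\bullet_{\poly}(\cL^\vee[-k],k-2)[k-1]$, the shift $[k-1]$ being exactly what is needed to reconcile the grading on $\cL[1]$ with that on $\cL^\vee[-k]$. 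The homological condition $Q^2=0$ then matches term by term the Maurer--Cartan equation $[Q,\pi]+\frac{1}{2}[\pi,\pi]=0$, where the ambient $Q$ on the right denotes the intrinsic dg structure of $\cM$ viewed on $\cL^\vee[-k]$.

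The main obstacle is the careful bookkeeping of degree shifts and signs within the polyvector-field shift $[k-1]$, needed to verify that each $\pi_l$ lands in the asserted total degree and weight and that the higher Jacobi identities for $Q$ translate precisely into the Maurer--Cartan equation for $\pi$. The infinite-dimensional aspect is mild: the weight filtration makes both sides pro-nilpotent in each fixed cohomological degree, so convergence reduces to finite-order checks at each weight.
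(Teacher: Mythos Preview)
Your argument for $(2)\Leftrightarrow(3)$ is correct and matches the paper: both sides are the same algebra under the identification $\sections{\hat{S}(\cL[k])}\cong C^\infty(\cL^\vee[-k])$, and the weight condition is preserved.

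For $(1)\Leftrightarrow(2)$, however, the paper's route is considerably more direct than yours. The paper never passes through the Va\u{\i}ntrob homological vector field on $\cL[1]$. Instead it observes that the weight condition forces each multi-bracket on $\sections{\hat{S}(\cL[k])}$ to be determined by its values on generators, and then writes down the generating relations
\[
\lambda_l(a_1,\dots,a_l)\in\sections{\cL},\qquad
\lambda_l(a_1,\dots,a_{l-1},f)=\rho_{l-1}(a_1,\dots,a_{l-1})f,\qquad
\lambda_l(f,g,\dots)=0,
\]
extending by the Leibniz rule. The $L_\infty$ algebroid axioms translate verbatim into the derived Poisson axioms under this extension, and the converse is read off the same formulas. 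No dualization, no Maurer--Cartan equation, no shifted cotangent bundle.

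Your detour through $Q$ on $\cL[1]$ introduces a genuine gap at the step ``each $Q_l$ transposes to an $l$-ary shifted polyvector field $\pi_l$ on $\cL^\vee[-k]$.'' The pairing $\cL\otimes\cL^\vee\to\kk$ does not by itself turn a vector field on $\cL[1]$ into a polyvector field on $\cL^\vee[-k]$. If what you mean is ``extract $(\lambda_l,\rho_{l-1})$ from $Q$ and then rebuild $\pi_l$ from that data,'' then passing through $Q$ is idle: you already have $(\lambda_l,\rho_{l-1})$ from the $L_\infty$ algebroid, and you are really carrying out the paper's direct argument in disguise. If instead you intend a direct correspondence $Q\leftrightarrow\pi$, you need the Mackenzie--Xu/Roytenberg symplectic isomorphism between shifted cotangent bundles of $\cL[1]$ and of $\cL^\vee[-k]$, which you do not invoke and which is more machinery than the statement warrants.

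Two smaller issues. First, your indexing is off: the weight-$l$ component of $Q$ (in the paper's sense) encodes $\rho_l$ and $\lambda_{l+1}$, not $\lambda_l$ and $\rho_l$ together; conversely $\pi_l$ encodes $\lambda_l$ and $\rho_{l-1}$. Second, the $Q$ appearing in the Maurer--Cartan equation of Proposition~\ref{Prop: kHomotopyPoissonGivesPi} is \emph{not} ``the intrinsic dg structure of $\cM$'': it is the unary bracket $\pi_1$ of the derived Poisson structure, which on the algebroid side is built from both $\lambda_1$ and $\rho_0$ and is itself part of the data to be constructed.
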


Now, going back to a Lie pair $(L,A)$, it is easily seen that, once a splitting of the short exact $0\to\LADA\to\LADL\to L/A\to 0$ has been chosen,
the graded vector bundle $A[1]\times L/A\to A[1]$ acquires a natural $L_\infty$ algebroid structure.
This $L_\infty$ algebroid was studied by Vitagliano in the special case of a Lie pair corresponding to a foliation \cite{MR3277952}.
Applying Theorem \ref{mower} to this $L_\infty$ algebroid and the integer $k=+1$ yields a degree $(+1)$ derived Poisson algebra structure
on $\tot \Omega^{\bullet}_A (\Lambda^\bullet (L/A))$.

The discussion of the constructions outlined above occupies Sections \ref{onion} and \ref{dandelion}.
Section \ref{cranberry} is devoted to the
study of derived Poisson structures from a purely algebraic perspective
and contains, in particular, the proof of a homotopy transfer result for derived Poisson algebras.

We would like to point out that, in the context of $\ZZ_2$-grading,
``shifted derived Poisson algebras''
were studied by Voronov \cite{MR2163405, MR2223157},
Khudaverdian--Voronov \cite{MR2757715}, and Bruce \cite{MR2840338},
who called them homotopy Poisson algebras and homotopy Schouten algebras, respectively.
Derived Poisson algebras of degree $0$ were also studied
by Oh--Park \cite{MR2180451} and Cattaneo--Felder \cite{MR2304327},
who called them $P_\infty$ algebras.
See also \cite{MR3653066, MR3654355}. 
For recent developments, see \cite{MR3575558}, \cite{arXiv:1808.10049} and \cite{arXiv:1411.6720}. 
References \cite{MR3633027, MR3631929, MR3090103} are also related to the present paper.

\subsection*{Notations}
In this paper, unless specified otherwise, graded means $\ZZ$-graded.
Given a graded vector space $V=\bigoplus_{n\in\ZZ}V^n$, we say that an element $v\in V^n$ has degree $n$ and we write $\degree{v}=n$.
Given a graded vector space $V=\bigoplus_{n\in\ZZ}V^{n}$, the symbol $V[k]$ denotes the graded vector space obtained from $V$
by shifting the graduation according to the rule $(V[k])^{n}=V^{n+k}$.
We write $\degree{v}^{[k]}$ to denote the degree of $v$ when regarded as an element of $V[-k]$.
Therefore, \[ \degree{v}^{[k]}=\degree{v}+k .\]
The dual $V^\vee$ of a graded vector space $V$ is graded according to the rule $(V^\vee)^n=(V^{-n})^\vee$.

Likewise, if $E=\bigoplus_{n\in\ZZ}E^{n}$ is a graded vector bundle over a manifold $M$,
$E[k]$ denotes the graded vector bundle obtained by shifting the graduation of the fibers of $E$ according to the above rule.

Given a vector space $V$, the symbol $\hat{S}(V)$ denotes the $\mathfrak{m}$-adic completion of the symmetric algebra $S(V)$,
where $\mathfrak{m}$ is the ideal of $S(V)$ generated by $V$. Thus, $\hat{\Sbullet}(V)=\prod_{p=0}^{\infty} S^p(V)$.
The symbol $\overline{S}(V)$ denotes the reduced symmetric algebra of $V$, i.e.\ $\overline{S}(V)=\bigoplus_{p=1}^{\infty} S^p(V)$.

The Koszul sign $\Koszul{\sigma;v_1,\cdots,v_p}$ of a permutation $\sigma\in \mathfrak{S}_p$
of $p$ homogeneous vectors $v_1,v_2,\dots,v_{p}$ of a graded vector space $V$ --- which will be abbreviated as $\Koszul{\sigma}$ --- is determined by the relation
\[ v_{\sigma(1)}\odot v_{\sigma(2)}\odot\cdots\odot v_{\sigma(p)} = \Koszul{\sigma;v_1,\cdots,v_p}\ v_1\odot v_2\odot\cdots\odot v_p ,\]
where $\odot$ denotes the multiplication in the symmetric algebra $\Sbullet(V)$.

An $(r,s)$-shuffle is a permutation $\sigma$ of the set $\{1,2,\cdots,r+s\}$ such that $\sigma(1)\le\sigma(2)\le\cdots\le\sigma(r)$
and $\sigma(r+1)\le\sigma(r+2)\le\cdots\le\sigma(r+s)$. We write $\shuffle{r}{s}$ to denote the set of $(r,s)$-shuffles.

\section{Derived Poisson algebras}
\label{cranberry}

\subsection{Derived Poisson algebras}

Let $k\in \ZZ$ be a fixed integer, and $\kk$ a field
of characteristic zero.

Given a $\ZZ$-graded $\kk$-vector space $V=\bigoplus_{k\in\ZZ}V^i$, we denote by $(\overline{S}(V)=\oplus_{n\geq1}V^{\odot n},\Delta)$ the reduced symmetric tensor coalgebra on $V$. This is the cofree cocommutative locally conilpotent coalgebra generated by $V$. In particular, every coderivation $Q:\overline{S}(V)\to\overline{S}(V)$ is completely determined by its corestriction $p\circ Q=:q=:(q_1,\ldots,q_n,\ldots)$, where $p:\overline{S}(V)\to V$ denotes
the canonical projection. The sequence of maps $q_n:V^{\odot n}\to V$,
$n\geq 1$, are called the \emph{Taylor coefficients} of $Q$. Similarly, given another graded space $W$, every morphism of coalgebras $F:\overline{S}(W)\to\overline{S}(V)$ is completely determined by its corestriction $p\circ F=f=(f_1,\ldots,f_n,\ldots)$. 
Again we call the sequence of maps $f_n:W^{\odot n}\to V$, $n\geq 1$, 
the Taylor coefficients of $F$.

\begin{definition}\label{def:derivedpoisson}
A degree $k$ derived Poisson algebra is a $\ZZ$-graded commutative
$\kk$-algebra $A=\bigoplus_{k\in\ZZ}A^i$
together with a degree $(+1)$ coderivation
\[ Q: \overline{S} (A[1-k])\to \overline{S} (A[1-k])[1] \]
of the reduced symmetric tensor coalgebra
$\big(\overline{S}(A[1-k]),\Delta\big)$ satisfying
\begin{enumerate}
\item the cohomological condition $Q\circ Q=0$; and
\item the Leibniz rule
\begin{equation}\label{leibniz}q_n(a_1,\ldots, a_{n-1}, a_na^\prime_n) 
= q_n(a_1,\ldots, a_{n-1}, a_n) a^\prime_n 
+(-1)^\epsilon a_n\, q_n(a_1,\ldots, a_{n-1} , a^\prime_n)
\end{equation}
with $\epsilon={\{(2-n)+(\alpha_1+\cdots+\alpha_{n-1})+k(n-1)\}\alpha_n}$,
for all $n\geq1$ and $a_1\in A^{\alpha_1}$, $a_2\in A^{\alpha_2}$, \dots, $a_n\in A^{\alpha_n}$, $a^\prime_n\in A$. An equivalent expression is 
\begin{equation*}q_n(a_1,\ldots, a_{n-1}, a_na^\prime_n) 
= q_n(a_1,\ldots, a_{n-1}, a_n) a^\prime_n 
+(-1)^{\alpha_n\alpha^\prime_n}\, q_n(a_1,\ldots, a_{n-1} , a^\prime_n)a_n
\end{equation*}
for all $a_1,\ldots,a_{n-1}\in A$, $a_n\in A^{\alpha_n}, a^\prime_n\in A^{\alpha^\prime_n}$.
\end{enumerate}
\end{definition}

To understand the meaning of Equation \eqref{leibniz}, and in particular the sign $(-1)^\epsilon$ of its last term, observe that,
if $a_1\in A^{\alpha_1}, a_2\in A^{\alpha_2},\cdots, a_n\in A^{\alpha_n}$,
or equivalently $a_1\in (A[1-k])^{\alpha_1-1+k}, a_2\in
(A[1-k])^{\alpha_2-1+k}, \cdots, a_n\in (A[1-k])^{\alpha_n-1+k}$,
then we have
\[ q_n(a_1,a_2,\ldots, a_n) \in
(A[1-k])^{1+(\alpha_1-1+k)+(\alpha_2-1+k)+\cdots+(\alpha_n-1+k)} ,\]
or equivalently
\[ q_n(a_1,a_2,\ldots, a_n) \in
A^{(1-k)+1+(\alpha_1-1+k)+(\alpha_2-1+k)+\cdots+(\alpha_n-1+k)} .\]
Therefore,
\[ A\ni x \mapsto q_n(a_1,\ldots, a_{n-1},x) \in A \]
is an operator on $A$ of degree
\[ 1+(\alpha_1-1+k)+\cdots+(\alpha_{n-1}-1+k) \\
= (2-n)+(\alpha_1+\cdots+\alpha_{n-1})+k(n-1). \]
Equation \eqref{leibniz} means that this operator
is a graded derivation on $A$.

An equivalent description of the above definition is the following:

\begin{definition}\label{Defn: kshiftedPinfinityalgebra}
A degree $k$ derived Poisson algebra is a $\ZZ$-graded commutative algebra
$A=\bigoplus_{i\in\ZZ}A^i$ (over a field $\kk$
of characteristic zero) endowed with a family of $\kk$-multilinear maps
$\lambda_n\mathrel{: } A^{\otimes n}\to A$ of degree $k(n-1)+2-n$, ($n=1,2,\cdots$),
defining
an $L_\infty$ algebra structure on $A [-k]$ such that
for all $a_1,\cdots,a_{n-1}\in A$, the map
\[ A\to A, \ \ \ \ a\mapsto\lambda_n({a_1,\cdots,a_{n-1},a}) \]
is a graded derivation.\end{definition} 
We note that the maps $\lambda_n$ and $q_n$ from the previous definitions
are related by \emph{d\'ecalage}, that is,
\[q_n(a_1,\ldots,a_n) = (-1)^{\epsilon}\lambda_n(a_1,\ldots, a_n)\]
for all $n\geq1$ and $a_1\in A^{\alpha_1},\ldots,a_n\in A^{\alpha_n}$,
where $\epsilon = \sum_{i=1}^n(n-i)(\alpha_i+k)$. In particular,
while the $q_n$ can be considered as degree one graded symmetric 
maps $q_n:A[1-k]^{\odot n}\to A[1-k]$, the $\lambda_n$
can be considered as degree $2-n$ graded antisymmetric maps $\lambda_n:A[-k]^{\wedge n}\to A[-k]$. In the sequel, the structure maps $\lambda_n$ are also denoted by $\{\cdots\}_n$, just as the usual Poisson brackets.

A degree $k$ Poisson algebra (see Appendix B) is a degree $k$ derived Poisson algebra, where the only nontrivial bracket
is the binary bracket $\lambda_2=\{\argument,\argument\}$.

\begin{remark}
In the context of $\ZZ_2$-grading,
Definition \ref{Defn: kshiftedPinfinityalgebra}
reduces to homotopy Poisson algebras and homotopy Schouten algebras
studied by Voronov \cite{MR2163405, MR2223157},
Khudaverdian--Voronov \cite{MR2757715},
and Bruce \cite{MR2840338}.

Derived Poisson algebras of degree $0$ were also studied
by Oh--Park \cite{MR2180451} and Cattaneo--Felder \cite{MR2304327},
who called them $P_\infty$ algebras.
\end{remark}

Given a degree $k$ derived Poisson algebra $(A, (\lambda_l)_{l\geq 1})$,	
we have a cochain complex $\lambda_1 : A^\bullet\to A^{\bullet+1}$.
The binary bracket $\lambda_2$ is of degree $k$, and
satisfies the Jacobi identity up to homotopy.
The following is thus immediate.

\begin{proposition}
\label{Prop: kshiftedPinfinitygiveskshiftedPoisson}
If $A $ is a degree $k$ derived Poisson algebra, then the binary bracket $\lambda_2$ induces,
on the cohomology groups $\mathbb{H}(A,\lambda_1)$, a degree $k$ Poisson algebra structure.
\end{proposition}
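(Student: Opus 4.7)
The plan is to unpack the $L_\infty$ axioms relating $\lambda_1$, $\lambda_2$, and $\lambda_3$, together with the Leibniz rule for $n=1$ and $n=2$, and check that all the structural identities of a degree $k$ Poisson algebra hold strictly on $\mathbb{H}(A,\lambda_1)$ even though on $A$ itself they hold only up to homotopy.

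First I would verify the graded commutative algebra structure on cohomology. The $L_\infty$ relation $\lambda_1\circ\lambda_1=0$ makes $(A,\lambda_1)$ a cochain complex, and the Leibniz rule of Definition~\ref{def:derivedpoisson} specialized to $n=1$ says exactly that $\lambda_1$ is a graded derivation of the associative multiplication. Hence $(A,\cdot,\lambda_1)$ is a graded commutative dg algebra, and the wedge product descends to a graded commutative product on $\mathbb{H}(A,\lambda_1)$.

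Next I would show that $\lambda_2$ descends to a well-defined, degree $k$, graded antisymmetric bracket on cohomology. The degree of $\lambda_2:A^{\otimes 2}\to A$ is $k(2-1)+2-2=k$, and graded antisymmetry (as a map $A[-k]^{\wedge 2}\to A[-k]$) is built into the $L_\infty$ structure. The $L_\infty$ axiom relating $\lambda_1$ and $\lambda_2$,
\[ \lambda_1\lambda_2(a,b) = \lambda_2(\lambda_1 a,b) + (-1)^{|a|+k}\lambda_2(a,\lambda_1 b), \]
shows both that $\lambda_2$ of two cocycles is a cocycle, and that $\lambda_2(\lambda_1 a,b)$ is a coboundary, so $\lambda_2$ factors through cohomology classes in each slot.

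Then I would check the three remaining Poisson axioms on $\mathbb{H}(A,\lambda_1)$. Graded antisymmetry on cohomology is inherited from graded antisymmetry on $A$. The graded derivation property of $[\bar a,\argument]:=\overline{\lambda_2(a,\argument)}$ with respect to the induced product is the image in cohomology of the $n=2$ case of Equation~\eqref{leibniz}. The graded Jacobi identity is the main identity to establish: the $L_\infty$ relation at arity $3$ reads, schematically,
\[ \sum_{\sigma\in\shuffle{2}{1}} \pm \lambda_2\bigl(\lambda_2(a_{\sigma(1)},a_{\sigma(2)}),a_{\sigma(3)}\bigr) = -\bigl(\lambda_1\lambda_3 + \lambda_3(\lambda_1\otimes\id\otimes\id+\cdots)\bigr)(a_1,a_2,a_3), \]
with signs dictated by d\'ecalage. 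When $a_1,a_2,a_3$ are cocycles, the right-hand side becomes a coboundary, so the graded Jacobi identity for $\lambda_2$ holds on $\mathbb{H}(A,\lambda_1)$.

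The only mild subtlety I anticipate is bookkeeping of the Koszul signs produced by d\'ecalage when passing between the degree $k(n-1)+2-n$ operations $\lambda_n$ on $A$ and the degree $2-n$ operations $\lambda_n$ on $A[-k]$; I would carry out the verification of Jacobi in the shifted space $A[-k]$, where the $L_\infty$ axioms take their standard form, and transfer the statement back to $A$. Assembling the four items (dg commutative product, well-definedness of $\lambda_2$ on cohomology, Leibniz, Jacobi) yields the claimed degree $k$ Poisson algebra structure on $\mathbb{H}(A,\lambda_1)$.
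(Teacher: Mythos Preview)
Your argument is correct and is exactly the standard unpacking that the paper has in mind: the paper does not give a proof at all, it simply remarks that $\lambda_1$ is a differential, $\lambda_2$ is of degree $k$ and satisfies the Jacobi identity up to homotopy, and declares the proposition ``thus immediate.'' Your write-up just makes explicit the routine checks (Leibniz for $n=1,2$, the $L_\infty$ relation at arities $2$ and $3$) that justify this one-line observation.
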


\subsection{Morphisms of derived Poisson algebras}

\begin{definition}\label{def: morphisms}
Let $(A,Q_A)$ and $(B,Q_B)$ be degree $k$ derived Poisson algebras: here $Q_A$ denotes the corresponding coderivation on $\overline{S}(A[1-k])$, similarly for $Q_B$, see Definition \ref{def:derivedpoisson}. A morphism $f_\infty=(f_1,\ldots,f_n,\ldots): B\to A$ of derived Poisson algebras is a collection of degree $(k-1)(n-1)$ maps $f_n: B^{\otimes n}\to A$, $n\geq1$, such that
\begin{enumerate}
\item If we regard the $f_n$ as degree $0$ maps
$B[1-k]^{\otimes n}\to A[1-k]$ they are graded symmetric, and the unique morphism $F:\overline{S}(B[1-k])\to\overline{S}(A[1-k])$ of coalgebras with corestriction $p\circ F = f_\infty$ satisfies $F\circ Q_B=Q_A\circ F$. 
In other words, this says that the maps $f_n$ induce (after d\'ecalage) an $L_\infty$ morphism from $B[-k]$ to $A[-k]$.
\item The following relation is satisfied for all $n\geq0$ and $x_1,\ldots,x_n,y,z\in B$:
\begin{equation}\label{eqn: derived morphism}
f_{n+1}(x_1,\ldots,x_n,yz) = \sum_{i=0}^{n}\sum_{\sigma\in\operatorname{Sh}(i,n-i)}
\minuspower{\diamond}f_{i+1}(x_{\sigma(1)},\ldots,x_{\sigma(i)},y)f_{n-i+1}(x_{\sigma(i+1)},\ldots,x_{\sigma(n)},z)
,\end{equation}
where
$\diamond=\epsilon(\sigma; x_1, x_2, \cdots , x_n)+\abs{y}((n-i)(k-1)+\abs{x_{\sigma(i+1)}}+\cdots+\abs{x_{\sigma(n)}})$, and the Koszul sign $\epsilon(\sigma; x_1, x_2, \cdots , x_n)$ associated to the permutation $\sigma$ is computed by regarding $x_1,\ldots,x_n$ as elements of $B[1-k]$, that is, of degrees $|x_1|+k-1,\ldots,|x_n|+k-1$. 

\noindent In particular, for $n=0$,
this says that $f_1: B\to A$ is a morphism of associative
algebras, and for $n=1$,
it says that $f_2: B^{\otimes 2}\to A$ is an $f_1$-biderivation.
\end{enumerate}
\end{definition}

\begin{remark}
Since $S(B[1-k])$ is a graded cocommutative coalgebra via the unshuffle coproduct $\Delta$ and $A$ is a graded commutative algebra with product $m_A:A^{\otimes 2}\to A$,
the space $\operatorname{Hom}(S(B[1-k]), A)$ is a graded commutative algebra via the convolution product $f\star g = m_A\circ (f\otimes g)\circ\Delta$. Given $y\in B$, we denote by $f(\ldots,y)$ the map
\begin{eqnarray*}
&& f(\ldots,y)\colon S(B[1-k])\to A,\\
&& x_1\odot\cdots\odot x_n\mapsto (-1)^{(n(k-1)+\abs{x_1}+\cdots+\abs{x_n})\abs{y}}f_{n+1}(x_1,\ldots,x_n,y),
\end{eqnarray*}
where $x_i\in B^{\abs{x_i}}=B[1-k]^{\abs{x_i}+k-1}$, $i=1,\ldots,n$. Then Equation \eqref{eqn: derived morphism} is equivalent to
\[ f(\ldots,yz) = f(\ldots,y)\star f(\ldots,z). \]
In other words, $f$ is a morphism of degree $k$
derived Poisson algebras if and only if
\begin{eqnarray*}
&& (B,m_B)\to(\operatorname{Hom}(S(B[1-k]),A),\star),\\
&&y\mapsto f(\ldots,y)
\end{eqnarray*}
is a morphism of graded algebras.
\end{remark}

The following proposition can be proved by a tedious direct computation, which we omit.

\begin{proposition}
With the above definition of morphisms, degree $k$ derived Poisson algebras
form a category.
\end{proposition}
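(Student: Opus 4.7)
The plan is to handle the four category axioms---existence of identities, well-definedness of composition, associativity, and unit laws---by leveraging the coalgebra description of morphisms. The identity on a degree $k$ derived Poisson algebra $(A,Q_A,m_A)$ should be the morphism with $f_1=\id_A$ and $f_n=0$ for $n\geq 2$: the induced coalgebra map is the identity on $\overline{S}(A[1-k])$, so condition (1) is automatic, and condition (2) reduces to $m_A(y,z)=m_A(y,z)$ when $n=0$ while all higher instances vanish identically. For composition, given morphisms $g\colon C\to B$ and $f\colon B\to A$ with associated coalgebra morphisms $G$ and $F$, I would define $f\circ g$ to be the morphism whose coalgebra form is $H:=F\circ G$. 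Associativity and the unit laws then come for free from the associativity and unitality of composition of maps, and condition (1) is preserved since $H\circ Q_C = F\circ Q_B\circ G = Q_A\circ H$.

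The remaining task is therefore to show that condition (2), i.e.\ Equation~\eqref{eqn: derived morphism}, is also preserved by this composition. Here the convolution-algebra reformulation recalled in the remark following Definition~\ref{def: morphisms} is the natural tool: condition (2) for $f$ is equivalent to the assertion that $\widetilde f\colon B\to \operatorname{Hom}(S(B[1-k]),A)$, $b\mapsto f(\ldots,b)$, is a morphism of graded commutative algebras with respect to the convolution product $\star$, and similarly for $\widetilde g$. The extra ingredient I would bring in is the observation that any coalgebra morphism $\Phi\colon S(C[1-k])\to S(B[1-k])$ induces, by pullback, a graded algebra morphism $\Phi^{*}\colon(\operatorname{Hom}(S(B[1-k]),A),\star)\to(\operatorname{Hom}(S(C[1-k]),A),\star)$, simply because $A$ is graded commutative and $\Phi$ preserves the coproduct.

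With these pieces in hand, the strategy is to express $\widetilde{f\circ g}\colon C\to \operatorname{Hom}(S(C[1-k]),A)$ explicitly in terms of $\widetilde g$, $\widetilde f$ and the pullback $G^{*}$ associated with $G$, and then to derive its multiplicativity formally from the multiplicativities of $\widetilde f$ and $\widetilde g$ combined with the algebra-morphism property of $G^{*}$. The main obstacle I anticipate---and the reason the authors defer to a ``tedious direct computation''---is the identification step: verifying that the map so assembled agrees, with the correct Koszul signs, with the one actually extracted from the Taylor coefficients of $H=F\circ G$ via corestriction. Both sides expand as sums indexed by ordered partitions of the inputs, and the equality hinges on a careful reconciliation of the signs introduced by the décalage $q_n\leftrightarrow\lambda_n$ and by the shuffles appearing in \eqref{eqn: derived morphism}. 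Once this combinatorial bookkeeping is settled, all four category axioms are in place.
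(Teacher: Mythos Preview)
The paper does not actually give a proof of this proposition: it states only that ``the following proposition can be proved by a tedious direct computation, which we omit.'' So there is nothing in the paper to compare your argument against line by line.

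Your outline is sound. The identity and associativity/unit axioms are indeed trivial at the coalgebra level, and preservation of condition~(1) under composition is immediate. Your idea of attacking condition~(2) via the convolution reformulation in the Remark is the right organizing principle, and your observation that any coalgebra morphism $G$ induces an algebra morphism $G^{*}$ on the convolution algebra is correct and useful. Where you stop---the identification of $\widetilde{f\circ g}$ with the appropriate combination of $\widetilde f$, $\widetilde g$, and $G^{*}$, including the Koszul bookkeeping---is precisely the ``tedious direct computation'' the authors allude to and decline to write out. One way to make that step cleaner is to note that $G^{*}\circ\widetilde f:B\to\operatorname{Hom}(S(C[1-k]),A)$ is an algebra morphism (composite of two such), hence extends uniquely to an algebra morphism from the convolution algebra $\operatorname{Hom}(S(C[1-k]),B)$ to $\operatorname{Hom}(S(C[1-k]),A)$; precomposing with the algebra morphism $\widetilde g$ then gives an algebra morphism $C\to\operatorname{Hom}(S(C[1-k]),A)$, and the identification of this with $\widetilde{f\circ g}$ is the remaining (genuinely sign-laden but mechanical) check. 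In short: your strategy is correct and is in fact more informative than what the paper provides.
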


The following Proposition \ref{prop: morphisms} justifies the previous definition in the framework of deformation theory.
Given a graded algebra $(A,m_A)$, we denote by $L=\operatorname{Coder}(\overline{S}(A[1-k]))$
the graded Lie algebra of coderivations of the reduced
symmetric coalgebra $\overline{S}(A[1-k])$.
From the point of view of deformation theory (cf.\ \cite{MR2130146}), this graded Lie algebra controls the deformations of the trivial $L_\infty$ algebra structure on $A[-k]$.
In other words, the set $\operatorname{MC}(L)$ of Maurer--Cartan elements of $L$, that is, the set of solutions $Q\in L^1$ of the Maurer-Cartan equation $[Q,Q]=0$,
is in bijective correspondence with the set of $L_\infty$ algebra structures on $A[-k]$.
Furthermore, the (formal) exponential group $\exp(L^0)=\{e^R\}_{R\in L^0}$ acts on the set $\operatorname{MC}(L)$ via conjugation $Q\mapsto e^{-R}Qe^R$,
and the corresponding set of orbits parameterizes $L_\infty$ algebra structures on $A[-k]$ up to $L_\infty$ isomorphism.
An easy computation shows that the subspace $M\subset L$, spanned by those
coderivations $Q$ whose Taylor coefficients $p\circ Q=(q_1,\ldots,q_n,\ldots)$ are multiderivations, i.e., they satisfy the Leibniz rule \eqref{leibniz} from Definition \ref{def:derivedpoisson}, is a graded Lie subalgebra.
Moreover, it is clear that the Maurer--Cartan elements in $M$ are precisely the degree $k$ derived Poisson algebra structures on $(A,m_A)$.
In the following Proposition \ref{prop: morphisms}, we show that the (formal) exponential group $\exp(M^0)$ is precisely
the group of coalgebra automorphisms $F: \overline{S}(A[1-k])\to\overline{S}(A[1-k])$ whose Taylor coefficients $p\circ F = (f_1,\ldots,f_n,\ldots)$ satisfy
Equation \eqref{eqn: derived morphism}.

In order to avoid convergence issue, we proceed formally.
We denote by $\mathbb{K}[[t]]$ the algebra of formal power series, by $A[[t]]$
the algebra $A\otimes_{\mathbb{K}}\mathbb{K}[[t]]$ of formal power series with coefficients in $A$,
and by $\overline{S}_{\mathbb{K}[[t]]}(A[[t]][1-k])$ the reduced symmetric
$\mathbb{K}[[t]]$-coalgebra over the $\mathbb{K}[[t]]$-module $A[[t]][1-k]$.
Given a degree zero coderivation $R\in L^0$, we consider the associated formal flow
$e^{tR}: =F^t: \overline{S}_{\mathbb{K}[[t]]}(A[[t]][1-k])\to\overline{S}_{\mathbb{K}[[t]]}(A[[t]][1-k])$.
It is a well defined $\mathbb{K}[[t]]$-linear coalgebra automorphism.

\begin{proposition}\label{prop: morphisms}
Given $R\in L^0,p\circ R=(r_1,\ldots,r_n,\ldots)$ as above, then the Taylor coefficients $f_n^t$ of the formal flow \[ e^{tR}: =F^t,\qquad p\circ F^t=(f^t_1,\ldots,f^t_n,\ldots) \] satisfy Equation \eqref{eqn: derived morphism}
if and only if all Taylor coefficients $r_n: A^{\otimes n}\to A$, $n\geq1$, are multi-derivations, that is, if and only if $R\in M^0$.
\end{proposition}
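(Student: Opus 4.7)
The plan is to prove the two implications separately. For the ``only if'' direction, I differentiate Equation \eqref{eqn: derived morphism} applied to the family $f_n^t$ at $t=0$, using that $F^0=\id$ forces $f_1^0=\id_A$ and $f_m^0=0$ for all $m\geq 2$, together with $\frac{d}{dt}f_n^t|_{t=0}=r_n$. On the right-hand side of \eqref{eqn: derived morphism}, each summand is a product of two Taylor coefficients; differentiating such a product and evaluating at $t=0$ kills every term containing some $f_m^0$ with $m\geq 2$, so only the extremes $i=0$ and $i=n$ (with the trivial shuffle $\sigma=\id$) survive. Reading off the Koszul sign $\diamond$ in these two surviving summands yields exactly the graded Leibniz rule characterizing each $r_m$ as a multi-derivation in its last argument, which by graded symmetry is the condition $R\in M^0$.

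For the ``if'' direction, suppose $R\in M^0$. I introduce the defect
\[ D_n^t(x_1,\ldots,x_n;y,z) := f_{n+1}^t(x_1,\ldots,x_n,yz) - \sum_{i=0}^n\sum_{\sigma\in\shuffle{i}{n-i}}\minuspower{\diamond}\, f_{i+1}^t(x_{\sigma(1)},\ldots,y)\, f_{n-i+1}^t(x_{\sigma(i+1)},\ldots,z), \]
whose vanishing for every $n\geq 0$ is equivalent to the validity of \eqref{eqn: derived morphism} for $F^t$. By construction $D_n^0=0$ for all $n$. The coalgebra relation $\dot F^t=R\circ F^t$ translates, at the level of Taylor coefficients, into an explicit ODE expressing each $\dot f_n^t$ as a polynomial in the $r_k$ and $f_\ell^t$ (with symmetrizations over ordered partitions of $\{1,\ldots,n\}$). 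Substituting this ODE into $\dot D_n^t$, the multi-derivation property of each $r_k$ (guaranteed by $R\in M^0$) allows the resulting expression to be regrouped as a $\kk$-linear combination of the $D_m^t$, with coefficients built from the $r_k$ and $f_\ell^t$. Uniqueness of solutions to the resulting homogeneous linear ODE $\dot D^t=T_t(D^t)$ with initial condition $D^0=0$ then forces $D^t\equiv 0$, completing the proof.

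The main obstacle lies in this final regrouping step: differentiating a sum of shuffled products of Taylor coefficients produces many terms, and exhibiting the combinatorial identity that reassembles them into multiples of the $D_m^t$ requires careful tracking of shuffle decompositions and Koszul signs. A more conceptual reformulation uses the remark following Definition \ref{def: morphisms}: Equation \eqref{eqn: derived morphism} asserts that $y\mapsto f^t(\ldots,y)$ is an algebra morphism into the convolution algebra $(\operatorname{Hom}(\Sbullet(A[1-k]),A),\star)$, while the multi-derivation condition on $R\in M^0$ becomes the derivation condition on $y\mapsto r(\ldots,y)$. In that language the ``if'' direction reduces to the familiar statement that the flow of a graded derivation is a one-parameter family of algebra morphisms; however, verifying this reduction at the level of Taylor coefficients amounts to precisely the regrouping computation just described, so the combinatorial work cannot be avoided.
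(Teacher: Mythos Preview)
Your proposal is correct and follows the same overall strategy as the paper: differentiate at $t=0$ for the easy direction, and for the converse introduce the defect $D_n^t$ (the paper calls it $\xi(t)$), derive an ODE for it from $\dot F^t=RF^t$, and use the initial condition $D_n^0=0$.

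The one substantive difference is that the paper organises the ``if'' direction by \emph{induction on $n$}, which dissolves exactly the regrouping obstacle you flag. At step $n$, the formula $\dot f^t_{n+1}=\sum_{p\geq1}\frac{1}{(p-1)!}\,r_p(f^t_{i_1},\ldots,f^t_{i_p+1})$ contains, for every $p\geq 2$, a factor $f^t_{i_p+1}(\ldots,yz)$ with $i_p<n$; the inductive hypothesis lets you expand each such factor via \eqref{eqn: derived morphism} \emph{exactly}, and then the multi-derivation property of $r_p$ recombines everything into $\frac{d}{dt}$ of the shuffle sum. What remains is not a triangular system $\dot D^t=T_t(D^t)$ coupling all the $D_m^t$, but the single scalar equation $\xi'(t)=r_1(\xi(t))$, from which $\xi^{(m)}(0)=r_1^m(\xi(0))=0$ is immediate. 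Your system-ODE formulation is valid, but the induction buys you a dramatically cleaner endgame and sidesteps the combinatorial bookkeeping you were worried about.
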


\begin{proof}
For notational simplicity, in the following computations we abbreviate equations such as Equation \eqref{eqn: derived morphism}
by omitting the $x_1,\ldots,x_n$ arguments, and by writing $\pm_K$
instead of the appropriate Koszul sign
(signs were made precise in Definition \ref{def: morphisms}).
For instance, Equation \eqref{eqn: derived morphism} becomes
\begin{equation*}
f^t_{n+1}(\ldots,yz) = \sum_{i=0}^{n}\sum_{\sigma\in\operatorname{Sh}(i,n-i)}\pm_K f^t_{i+1}(\ldots,y)f^t_{n-i+1}(\ldots,z)
.\end{equation*}
Having set these notations, we proceed with the proof.
One implication is easy: assuming Equation \eqref{eqn: derived morphism} is satisfied, then for all $x_1,\ldots,x_n,y,z\in A$,
\begin{multline*}
r_{n+1}(\ldots,yz) = \frac{d}{dt} f^t_{n+1}(\ldots,yz)_{\big|t=0}
=\sum_{i=0}^n\sum_{\sigma\in\operatorname{Sh}(i,n-i)}\frac{d}{dt}\left( \pm_K f^t_{i+1}(\ldots,y) f^t_{n-i+1}(\ldots,z) \right)_{\big|t=0} = \\
=\sum_{i=0}^n\sum_{\sigma\in\operatorname{Sh}(i,n-i)} \pm_K \frac{d}{dt}
f^t_{i+1}(\ldots,y)_{\big|t=0} f^0_{n-i+1}(\ldots,z) 
\pm_K f^0_{i+1}(\ldots,y)\frac{d}{dt}f^t_{n-i+1}(\ldots,z)_{\big| t=0} = \\ = r_{n+1}(\ldots,y)z \pm_K y\,r_{n+1}(\ldots,z),\qquad\qquad\qquad\qquad\qquad\qquad\qquad
\end{multline*}
since $p\circ F^0=(\id,0,\ldots,0,\ldots)$, i.e. $f_1^0=\id$ and $f^0_k=0$ for $k\geq 2$. This shows that $r_{n+1}$ is a multi-derivation for all $n\geq0$.

We turn to the other implication.
Since $r_1$ is an algebra derivation, $f_1^t=e^{tr_1}: A[[t]]\to A[[t]]$ is an algebra morphism.
Let $n\geq1$,
and fix $x_1,\ldots,x_n,y,z\in A$. We consider the formal
power series
\begin{equation*}
A[[t]]\ni \xi(t): = f^t_{n+1}(\ldots, y z) - \sum_{i=0}^{n}
\sum_{\sigma\in\operatorname{Sh}(i,n-i)}\pm_K
f^t_{i+1}(\ldots,y) f^t_{n-i+1}(\ldots,z)
.\end{equation*}
We need to prove that $\xi(t)=0$. As before,
since $F^0=(\id,0,\cdots,0,\cdots)$, we see that $\xi(0)=0$.
Since $F^t=e^{tR}$, we have $\frac{d}{dt}F^t = RF^t$. Using induction on $n$, we have
\begin{flalign*} &\frac{d}{dt}f^t_{n+1}(\ldots,y z) = &\\ =\: & r_1f^t_{n+1}(\ldots,y z) + \sum_{\stackrel{p\geq2,i_1,\ldots,i_{p-1}\geq1,i_p\geq0}{i_1+\cdots+i_p=n}}
\sum_{\sigma\in\operatorname{Sh}(i_1,\ldots,i_p)}\pm_K \frac{1}{(p-1)!}r_p(f^t_{i_1}(\ldots), \ldots,f^t_{i_p+1}(\ldots,y z)) = &\\
=\: &r_1(\xi(t)) + r_1\left( \sum_{i=0}^{n}\sum_{\sigma\in\operatorname{Sh}(i,n-i)}\pm_K f^t_{i+1}(\ldots,y) f^t_{n-i+1}(\ldots,z) \right)+&\\
&\sum_{\stackrel{p\geq2,i_1,\ldots,i_{p-1}\geq1,i_p,i_{p+1}\geq0}{i_1+\cdots+i_{p+1}=n}}\sum_{\sigma\in\operatorname{Sh}(i_1,\ldots,i_{p+1})}
\pm_K \frac{1}{(p-1)!}r_p(f^t_{i_1}(\ldots), \ldots,f^t_{i_p+1}(\ldots,y)f^t_{i_{p+1}+1}(\ldots, z)) =&\\
=\: & r_1(\xi(t))+\sum_{\stackrel{p,i_1,\ldots,i_{p-1}\geq1,i_p,i_{p+1}\geq0}{i_1+\cdots+i_{p+1}=n}}\sum_{\sigma\in\operatorname{Sh}(i_1,\ldots,i_{p+1})}
\pm_K \frac{1}{(p-1)!}r_p(f^t_{i_1}(\ldots), \ldots,f^t_{i_p+1}(\ldots,y)) f^t_{i_{p+1}+1}(\ldots, z)\, +& \\
&\pm_K \frac{1}{(p-1)!}f^t_{i_p+1}(\ldots,y) r_p(f^t_{i_1}(\ldots), \ldots,f^t_{i_{p+1}+1}(\ldots, z)) = & \\ =\: & r_1(\xi(t)) + 
\sum_{i=0}^n\sum_{\sigma\operatorname{Sh}(i,n-i)}\pm_K \frac{d}{dt}f^t_{i+1}(\ldots,y) f^t_{n-i+1}(\ldots,z) \pm_K f^t_{i+1}(\ldots,y)
\frac{d}{dt}f^t_{n-i+1}(\ldots,z) = &\\ =\:
& r_1(\xi(t)) + \frac{d}{dt}\left( \sum_{i=0}^{n}\sum_{\sigma\in\operatorname{Sh}(i,n-i)}\pm_K f^t_{i+1}(\ldots,y) f^t_{n-i+1}(\ldots,z) \right).&
\end{flalign*}
To sum up, we found that \[\xi'(t) = r_1(\xi(t)), \] thus \[\xi^{(n)}(t) = r_1^n(\xi(t)),\qquad\forall n\geq0 .\]
Expanding in formal Taylor series,
\[ \xi(t) = \sum_{n\geq0}\frac{t^n}{n!}\xi^{(n)}(0)= \sum_{n\geq0}\frac{t^n}{n!}r_1^{n}(\xi(0)) =\sum_{n\geq0}\frac{t^n}{n!}r_1^{n}(0) = 0. \]
\end{proof}

\subsection{Homotopy transfer for derived Poisson algebras}

In this section, we prove a homotopy transfer theorem for derived Poisson algebras. 

We start by recalling the standard Perturbation Lemma. To the best of our knowledge, this result first appeared implicitly in the paper \cite{MR0144348}, and explicitly in \cite{MR0220273, MR0301736}: for the treatment given here, see also \cite{MR1103672,MR1109665, MR1782593}. 

\begin{definition}[\cite{MR0056295}]
Let $(A,d_A)$ and $(B,d_B)$ be cochain complexes.
A contraction $(\sigma,\tau,h)$ of $(A,d_A)$ onto $(B,d_B)$ is the datum of 
cochain maps $\sigma: (A,d_A)\to(B,d_B)$, $\tau: (B,d_B)\to(A,d_A)$
and a contracting homotopy $h: A\to A$ such that
\[ \sigma\tau = \id_B,\qquad hd_A+d_Ah = \tau\sigma-\id_A ,\]
and furthermore
\[ \sigma h=0,\qquad h\tau=0,\qquad h^2=0 .\]
\end{definition}
We denote such a contraction data by
\[ \begin{tikzcd}
(A,d_A) \arrow[loop left, "h"] \arrow[r, shift left, "\sigma"] & (B,d_B) \arrow[l, shift left, "\tau"].
\end{tikzcd} \]

\begin{lemma}[\cite{MR0144348,MR0220273,MR0301736}]
\label{lem: PL}
Let $(\sigma,\tau,h)$ be a contraction of $(A,d_A)$ onto $(B,d_B)$,
and let $\delta_A: A\to A$ be a perturbation of the differential $d_A$:
that is, $\delta_A$ is a degree $(+1)$ map such that $\breve{d}_A: = d_A +\delta_A$ squares to zero.
Then \[ \delta_B: =\sum_{i=0}^{\infty}\sigma\delta_A(h\delta_A)^i \tau \] is a perturbation of the differential $d_B$,
and $(\breve{\sigma},\breve{\tau},\breve{h})$ defined by
\begin{eqnarray*}
\breve{\sigma}&: =& \sum_{i=0}^{+\infty} \sigma(\delta_Ah)^i ,\\
\breve{\tau}&: =& \sum_{i=0}^{+\infty} (h\delta_A)^i \tau,\\
\breve{h}&: =& \sum_{i=0}^{+\infty} h(\delta_Ah)^i \\
\end{eqnarray*}
is a contraction of $(A,\breve{d}_A)$ onto $(B,\breve{d}_B: =d_B+\delta_B)$.
\end{lemma}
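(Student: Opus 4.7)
The plan is to treat this as a fixed-point/geometric-series manipulation, working under a suitable completeness or filtration hypothesis that makes all the sums convergent (in applications one has a complete decreasing filtration on $A$ with $\delta_A$ raising the filtration). All the assertions then reduce to algebraic identities that follow from the three contraction relations $\sigma\tau=\id_B$, $hd_A+d_Ah=\tau\sigma-\id_A$, and the side conditions $\sigma h=h\tau=h^2=0$.

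The first step is to introduce the abbreviation $N:=\sum_{i\geq 0}(h\delta_A)^i=(\id-h\delta_A)^{-1}$ and $N':=\sum_{i\geq 0}(\delta_A h)^i=(\id-\delta_A h)^{-1}$, so that
\[ \breve{\tau}=N\tau,\qquad \breve{\sigma}=\sigma N',\qquad \breve{h}=hN'=Nh,\qquad \delta_B=\sigma\delta_A N\tau. \]
The identity $hN'=Nh$ follows immediately from $h(\delta_A h)=(h\delta_A)h$, and will be used throughout. The key elementary identity is $N=\id+h\delta_A N=\id+N h\delta_A$, and similarly for $N'$.

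Next I would verify, in order: (i) $\breve{d}_B^{2}=0$, which is equivalent to $\delta_B d_B+d_B\delta_B+\delta_B^{2}=0$ and reduces, using $\sigma d_A=d_B\sigma$ and $d_A\tau=\tau d_B$, to the identity $\delta_A N\tau d_B=\breve{d}_A N\tau-d_A\tau-\delta_A\tau+\delta_A h\delta_A N\tau$, an easy telescoping. (ii) The cochain map properties $\breve{\sigma}\breve{d}_A=\breve{d}_B\breve{\sigma}$ and $\breve{d}_A\breve{\tau}=\breve{\tau}\breve{d}_B$, both of which follow by expanding $\breve{d}_A=d_A+\delta_A$ and using $\sigma d_A=d_B\sigma$, $d_A\tau=\tau d_B$ and the side conditions $\sigma h=0=h\tau$ to collapse the cross terms. (iii) The normalization $\breve{\sigma}\breve{\tau}=\sigma N'N\tau=\sigma\tau=\id_B$, using $\sigma h=0$ to kill every term of $N'$ past the zeroth and $h\tau=0$ similarly for $N$. (iv) The side conditions $\breve{\sigma}\breve{h}=\sigma N'hN'=0$, $\breve{h}\breve{\tau}=NhN\tau=0$ and $\breve{h}^{2}=NhhN'=0$, all immediate from $\sigma h=h\tau=h^{2}=0$.

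The main step — and the only one with real content — is (v) the homotopy identity $\breve{h}\breve{d}_A+\breve{d}_A\breve{h}=\breve{\tau}\breve{\sigma}-\id_A$. My plan is to write $\breve{h}\breve{d}_A+\breve{d}_A\breve{h}=N(hd_A+d_Ah)N'+(Nh\delta_A N'+N\delta_A h N')$ using $N'\delta_A=\delta_A N$ repeatedly; then substitute $hd_A+d_Ah=\tau\sigma-\id_A$ and $h\delta_A N'=N-\id=N'-\id+\text{correction}$ to reach $N\tau\sigma N'-NN'+(N-\id)N'+N(N'-\id)$, which telescopes to $\breve{\tau}\breve{\sigma}-\id_A$. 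The expected obstacle is the sign/bookkeeping in this telescoping; getting the partial sums of $N$ and $N'$ to cancel correctly is the delicate part, and is best handled by first proving the two auxiliary identities $Nh\delta_A=N-\id$ and $\delta_A hN'=N'-\id$, and then using them to rewrite every occurrence of a geometric series in terms of $N-\id$ or $N'-\id$ before collecting terms.
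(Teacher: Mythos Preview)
The paper does not prove this lemma; it is cited as a classical result, so there is nothing to compare against directly. Your geometric-series approach with $N=(\id-h\delta_A)^{-1}$ and $N'=(\id-\delta_Ah)^{-1}$ is indeed the standard one, and steps (ii)--(iv) go through, although your explanation of (iii) is garbled: $\sigma N'$ does \emph{not} collapse to $\sigma$ from $\sigma h=0$ alone, since the nonzero terms of $N'$ end in $h$ rather than begin with it. One must first use $h^2=0$ to obtain $N'N=N+N'-\id$, and only then apply $\sigma h=0$ and $h\tau=0$.

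There is a genuine gap in step (v). Your claimed identity
\[\breve{h}\breve{d}_A+\breve{d}_A\breve{h}=N(hd_A+d_Ah)N'+Nh\delta_A N'+N\delta_A h N'\]
is false: the left side equals $Nh\breve{d}_A+\breve{d}_AhN'$, and no application of $N'\delta_A=\delta_AN$ will manufacture the extra $N'$ on the right of the first summand or the extra $N$ on the left of the second. More seriously, your plan for (v) never invokes the hypothesis $(d_A+\delta_A)^2=0$, and this hypothesis is \emph{indispensable} for the homotopy identity. A correct route is as follows: from your auxiliary identities one has
\[\breve{h}\breve{d}_A+\breve{d}_A\breve{h}=Nhd_A+d_AhN'+(N-\id)+(N'-\id),\]
while $\breve{\tau}\breve{\sigma}-\id=N(\tau\sigma)N'-\id=NN'+Nhd_AN'+Nd_AhN'-\id$. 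The discrepancy $Nhd_AN'+Nd_AhN'-Nhd_A-d_AhN'$ rewrites, using $N'-\id=\delta_AhN'$ and $N-\id=Nh\delta_A$, as $Nh(d_A\delta_A+\delta_Ad_A)hN'=-Nh\delta_A^2hN'=-(N-\id)(N'-\id)$, and it is precisely this term that makes the remaining bookkeeping close. The relation $d_A\delta_A+\delta_Ad_A=-\delta_A^2$ is exactly where $\breve{d}_A^{\,2}=0$ enters; without it the identity simply does not hold.
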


To be more precise, we should add some technical assumption ensuring convergence of the above infinite sums, but we will be loose in that respect and proceed formally.

\begin{definition}\label{def: semifullcontraction}
Given dg commutative algebras $(A,d_A,m_A )$, $(B,d_B,m_B )$ and a contraction $(\sigma,\tau,h)$ of $(A,d_A)$ onto $(B,d_B)$,
we say that $(\sigma,\tau,h)$ is a \emph{semifull algebra contraction} if the following identities are satisfied for all $a,b\in A$ and $x,y\in B$:
\begin{eqnarray}
\label{eqA1} h(\,(-1)^{|a|+1} h(a)b + ah(b)\,) &=& h(a)h(b),\\
\label{eqA2} h( a\tau(x)) &=& h(a)\tau(x),\\
\label{eqA3} \sigma(\,(-1)^{|a|+1} h(a)b + ah(b)\,) &=& 0,\\
\label{eqA4} \sigma( a\tau(x)) &=& \sigma(a)x,\\
\label{eqA5} \tau(xy) &=& \tau(x)\tau(y).	
\end{eqnarray}
\end{definition}

\begin{remark}\label{rem: semifullcontraction}
This class of contractions was introduced by Real \cite{MR1782593}.
More precisely, in \cite[Definition 4.5]{MR1782593}, Equations \eqref{eqA1}-\eqref{eqA4} are replaced by the seemingly weaker
\begin{gather*} h(h(a)h(b)) = h(h(a)\tau(x)) = 0 ,\\
\sigma(h(a)h(b)) = \sigma(h(a)\tau(x)) = 0 ,\end{gather*}
whereas Equation \eqref{eqA5} is maintained.
It is straightforward that, if Equations \eqref{eqA1}-\eqref{eqA4} are satisfied, then the above equations are satisfied as well.
In fact, our definition and the one from \cite{MR1782593} are equivalent (in practice, the above equations are usually easier to check, on the other hand, Equations \eqref{eqA1}-\eqref{eqA4} will be the key to the following computations).
To illustrate this fact, we show how to deduce Equation \eqref{eqA1} from the above relations: these imply \[ h(h(a) b) = h( h(a) (\tau\sigma-hd_A-d_Ah)(b))=-h(h(a)d_Ah(b)), \]
and similarly $h(a h(b)) = -h(d_Ah(a) h(b))$.
Thus \[h(\,(-1)^{|a|+1} h(a)b + ah(b)\,) = -hd_A (h(a)h(b)) = (d_Ah-\tau\sigma+\id) (h(a)h(b)) = h(a) h(b) .\]
Equations \eqref{eqA2}-\eqref{eqA4} can be deduced similarly.
\end{remark}

\begin{definition}
Given a dg algebra $(A,d_A,m_A )$, an \emph{algebra perturbation} of $d_A$ is a perturbation in the usual sense,
which is furthermore an algebra derivation.
\end{definition}

\begin{proposition}\label{Prop:semifullpeturbalsosemiful}
Given dg algebras $(A,d_A,m_A)$, $(B,d_B,m_B)$, a semifull algebra contraction $(\sigma,\tau,h)$ of $(A,d_A,m_A)$
onto $(B,d_B,m_B)$ and an algebra perturbation $\delta_A: A\to A$ of $d_A$, we apply the Perturbation Lemma \ref{lem: PL}.
Then $\delta_B$ is an algebra perturbation of $d_B$, and $(\breve{\sigma},\breve{\tau},\breve{h})$ is a semifull algebra contraction
of $(A,\breve{d}_A,m_A)$ onto $(B,\breve{d}_B,m_B)$.
\end{proposition}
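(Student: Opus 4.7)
The plan is to verify each of the six required properties---that $\delta_B$ is an algebra derivation on $B$, together with the five semifull identities \eqref{eqA1}--\eqref{eqA5} for $(\breve{\sigma},\breve{\tau},\breve{h})$ relative to the perturbed differentials---by exploiting the fixed-point recursions
\[
\breve{\tau} = \tau + h\delta_A\breve{\tau}, \qquad \breve{h} = h + h\delta_A\breve{h}, \qquad \breve{\sigma} = \sigma + \breve{\sigma}\delta_A h,
\]
together with $\delta_B = \sigma\delta_A\breve{\tau} = \breve{\sigma}\delta_A\tau$; these are obtained by reorganizing the geometric series furnished by the Perturbation Lemma.

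I would establish Equation \eqref{eqA5} for $\breve{\tau}$ first. Setting $\phi(x,y) := \breve{\tau}(xy) - \breve{\tau}(x)\breve{\tau}(y)$ and substituting $\breve{\tau} = \tau + h\delta_A\breve{\tau}$ on both sides, a direct computation using the Leibniz rule for $\delta_A$, graded commutativity in $A$, and the original identities \eqref{eqA1}, \eqref{eqA2}, and \eqref{eqA5} produces the self-referential equation $\phi = h\delta_A\phi$. Formal nilpotency of $h\delta_A$ (the perturbation series is taken formally) then forces $\phi = 0$. With \eqref{eqA5} for $\breve{\tau}$ in hand, Equations \eqref{eqA1}--\eqref{eqA2} for $\breve{h}$ and \eqref{eqA3}--\eqref{eqA4} for $\breve{\sigma}$ follow by entirely analogous arguments: each target identity, expanded via the corresponding fixed-point recursion and the original semifull identities, produces a self-referential equation of the same form, whose only formal solution is zero.

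Finally, the derivation property of $\delta_B$ on $B$ follows by writing
\[
\delta_B(xy) = \sigma\delta_A\breve{\tau}(xy) = \sigma\delta_A\bigl(\breve{\tau}(x)\breve{\tau}(y)\bigr),
\]
expanding via the Leibniz rule for $\delta_A$, replacing $\breve{\tau}(\cdot)$ by $\tau(\cdot) + h\delta_A\breve{\tau}(\cdot)$, then applying \eqref{eqA4} to convert $\sigma(a\tau(\cdot))$ into $\sigma(a)\cdot(\cdot)$ and \eqref{eqA3} to discard the cross-terms of the form $\sigma(h(\alpha)\beta + (-1)^{|\alpha|}\alpha h(\beta))$. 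The identity $h\delta_A\breve{\tau} = \breve{\tau} - \tau$ allows the remaining contributions to be reassembled as $\delta_B(x)y + (-1)^{|x|}x\delta_B(y)$.

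The principal obstacle is the Koszul sign bookkeeping, which becomes nontrivial precisely when factors of $\tau(\cdot)$ must be permuted past factors of $h(\cdot)$ via graded commutativity before \eqref{eqA2} can be applied; every such transposition generates a sign that must match, at each formal order in $\delta_A$, the sign produced by the Leibniz rule. Conceptually, however, the five semifull identities express a compatibility of the contraction with the algebra structure which is preserved under the perturbation precisely because $\delta_A$ is itself an algebra derivation; the proposition is then a formal consequence of this compatibility once the signs have been verified.
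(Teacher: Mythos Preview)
Your argument is correct and is essentially a repackaging of the paper's proof: where the paper proves $(h\delta_A)^i\tau(xy)=\sum_{j=0}^i(h\delta_A)^j\tau(x)\,(h\delta_A)^{i-j}\tau(y)$ by induction on $i$, your fixed-point identity $\phi=h\delta_A\phi$ together with formal nilpotency encodes exactly the same induction. The derivation property of $\delta_B$ is likewise handled identically, the paper simply replacing the leftmost $h$ by $\sigma$ in the inductive computation and invoking \eqref{eqA3}--\eqref{eqA4} in place of \eqref{eqA1}--\eqref{eqA2}.

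The one genuine difference is in the treatment of \eqref{eqA1}--\eqref{eqA4} for $(\breve{\sigma},\breve{\tau},\breve{h})$. You propose to verify them directly by the same fixed-point scheme, which works but is where, as you note, the Koszul bookkeeping becomes heaviest. The paper instead invokes Remark~\ref{rem: semifullcontraction}: it suffices to check the weaker conditions $\breve{h}(\breve{h}(a)\breve{h}(b))=\breve{h}(\breve{h}(a)\breve{\tau}(x))=0$ and $\breve{\sigma}(\breve{h}(a)\breve{h}(b))=\breve{\sigma}(\breve{h}(a)\breve{\tau}(x))=0$, and these are nearly immediate because $\breve{h}=h(\cdots)$, $\breve{\tau}=\tau+h(\cdots)$ means every argument already lies in the image of $h$ or $\tau$, so the original \eqref{eqA1}--\eqref{eqA4} (in their Remark~\ref{rem: semifullcontraction} form) apply directly. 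This shortcut avoids most of the sign-tracking you anticipate.
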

\begin{proof}
This was proved in \cite{MR1782593}, see also \cite{MR1103672,MR1109665} for some related results.
For completeness, we sketch a proof of this fact.
To show that
\[ \breve{\tau} (x y) = \breve{\tau}(x)\breve{\tau}(y), \]
we prove inductively that
\[ (h\delta_A)^i \tau(x y) = \sum_{j=0}^i (h\delta_A)^j\tau(x) (h\delta_A)^{i -j}\tau(y),\qquad\forall i\geq0 .\]
The basis of the induction is Equation \eqref{eqA5}.
Assume the above identity holds for a given $i$.
Then
\begin{multline*}
(h\delta_A)^{i +1}\tau(x y) = h\delta_A\left( \sum_{j=0}^i (h\delta_A)^j\tau(x) (h\delta_A)^{i -j}\tau(y)\right)=\\
= h\left( \delta_A(h\delta_A)^i \tau(x)\tau(y)\right) +\\
+ \sum_{j=1}^i h\left( (-1)^{|x|}h\delta_A(h\delta_A)^{j-1}\tau(x)\delta_A(h\delta_A)^{i -j}\tau(y)
+\delta_A(h\delta_A)^{j-1}\tau(x) h\delta_A(h\delta_A)^{i -j}\tau(y)\right)+\\
+ (-1)^{|x|}h\left(\tau(x) \delta_A(h\delta_A)^i \tau(y) \right) =\\
= (h\delta_A)^{i +1}\tau(x) \tau(y) + \sum_{j=1}^i (h\delta_A)^j\tau(x) (h\delta_A)^{i +1-j}\tau(y)
+ \tau(x)(h\delta_A)^{i +1}\tau(y)
,\end{multline*}
using Equations \eqref{eqA1}-\eqref{eqA2}, which proves the inductive step.

Replacing the leftmost $h$ by $\sigma$ in the above computation, and using Equations \eqref{eqA3}-\eqref{eqA4} in the last passage, we see that
\[ \sigma\delta_A(h\delta_A)^i \tau(x y) = \sigma\delta_A(h\delta_A)^i \tau(x) y + (-1)^{|x|} x\,\sigma\delta_A(h\delta_A)^i \tau(y), \]
which implies that $\delta_B$ is indeed an algebra perturbation.

Finally, to show that $(\breve{\tau},\breve{\sigma},\breve{h})$ satisfies Equations \eqref{eqA1}-\eqref{eqA4} in Definition \ref{def: semifullcontraction},
it suffices to show that it satisfies the equivalent conditions in Remark \ref{rem: semifullcontraction}, which follow easily from the definitions.
\end{proof}

The main result of this section is the following theorem, which says that we can transfer derived Poisson algebra structures along semifull algebra contractions.

\begin{theorem}\label{Bandiera: homotopytransfer}
Let $(A,d_A,m_A)$ and $(B,d_B,m_B)$ be dg commutative algebras,
and let $(\sigma,\tau,h)$ be a semifull algebra contraction of $(A,d_A,m_A)$ onto $(B,d_B,m_B)$.
Let $\lambda_n: A^{\otimes n}\to A$, $n\geq2$, be a family of maps making $(A,d_A=:\lambda_1 ,\lambda_2,\ldots,\lambda_n,\ldots)$
into a degree $k$ derived Poisson algebra.
Via homotopy transfer along the contraction $(\sigma,\tau,h)$, there is an induced $L_\infty$ algebra structure on $B[-k]$,
whose structure maps we denote by $\ell_n: B^{\otimes n}\to B$, $n\geq2$.
These maps make $(B,d_B=:\ell_1 ,\ell_2,\ldots,\ell_n,\ldots)$ into a degree $k$ derived Poisson algebra.
Moreover, the $L_\infty$ quasi-isomorphism $\tau_\infty=(\tau_1, \tau_2,\ldots,\tau_n,\ldots)$ from $B[-k]$ to $A[-k]$
induced via homotopy transfer is a morphism of degree $k$ derived Poisson algebras.
\end{theorem}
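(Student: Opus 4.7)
My plan is to apply the Perturbation Lemma~\ref{lem: PL} at the level of the cofree cocommutative coalgebras, so that the transferred $L_\infty$ structure and the transferred $L_\infty$ morphism emerge simultaneously, and then read off the derived Poisson refinement from the semifull algebra hypotheses \eqref{eqA1}--\eqref{eqA5}. Concretely, I would first lift the contraction $(\sigma,\tau,h)$ to a contraction $(\Sigma,T,H)$ of $\overline{S}(A[1-k])$ onto $\overline{S}(B[1-k])$ by the standard symmetric-tensor formulas; with respect to the coderivations $D_A$, $D_B$ whose only non-trivial Taylor coefficients are $d_A$, $d_B$, this is a contraction of dg coalgebras. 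Writing the derived Poisson coderivation as $Q_A = D_A + P_A$, where $P_A$ has Taylor coefficients $(0,\lambda_2,\lambda_3,\ldots)$, I would apply the Perturbation Lemma to the perturbation $P_A$. This yields a perturbed square-zero coderivation $Q_B = D_B + P_B$ together with a transferred coalgebra morphism $\breve{T}\colon \overline{S}(B[1-k]) \to \overline{S}(A[1-k])$ intertwining $Q_B$ with $Q_A$. The Taylor coefficients of $P_B$ are the desired maps $\ell_n : B^{\otimes n} \to B$, and those of $\breve{T}$ are the components $\tau_n$ of the $L_\infty$ quasi-isomorphism $\tau_\infty$; so far this is simply the standard homotopy transfer of $L_\infty$ algebras.

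The content of the theorem is to promote this to a derived Poisson statement: the $\ell_n$ must satisfy the Leibniz rule \eqref{leibniz} and the $\tau_n$ must satisfy the derived morphism identity \eqref{eqn: derived morphism}. I would prove both by expanding the Neumann-series outputs
\[
P_B \;=\; \sum_{i\geq 0} \Sigma\, P_A\, (H\, P_A)^i\, T, \qquad \breve{T} \;=\; \sum_{i \geq 0} (H\, P_A)^i\, T,
\]
and proceeding by induction on $i$. The base case $i=0$ uses that $T$ extends $\tau$ (which is an algebra morphism by Equation \eqref{eqA5}) together with the hypothesis that each $\lambda_n$ is a multi-derivation on $A$. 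For the inductive step, an input of the form $yz$ with $y,z \in B$ is propagated upward through the factors $HP_A$: whenever it meets a $\lambda_i$, the multi-derivation property of $\lambda_i$ splits the expression into two subtrees carrying $y$ and $z$ respectively, and whenever it passes through $H$ one commutes the product past $h$ using Equations \eqref{eqA1}--\eqref{eqA2}. Closing at the root requires \eqref{eqA3}--\eqref{eqA4} for $P_B$ and only \eqref{eqA1}--\eqref{eqA2} for $\breve{T}$; the resulting identity at each order is exactly \eqref{leibniz} for $\ell_n$, respectively \eqref{eqn: derived morphism} for $\tau_n$. As a sanity check, Proposition~\ref{prop: morphisms} confirms that these are precisely the conditions that single out the subspace of ``multi-derivation coderivations'' within the full dg Lie algebra of coderivations, so the inductive analysis really does land inside the correct sub-theory.

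The main obstacle is the combinatorial bookkeeping in the inductive step, where tree-shuffle signs and coshuffle decompositions proliferate. I would organize the argument as a direct analogue of the proof of Proposition~\ref{Prop:semifullpeturbalsosemiful}: the five semifull algebra identities are exactly what is needed at the three structural locations of each tree -- leaves (handled by \eqref{eqA5}), internal edges (handled by \eqref{eqA1}--\eqref{eqA2}), and root (handled by \eqref{eqA3}--\eqref{eqA4}) -- to preserve the Leibniz rule and the derived morphism identity at every order of the perturbation expansion. This mirrors the closure argument for the ordinary algebra product in Proposition~\ref{Prop:semifullpeturbalsosemiful}, with the role of $m_A$ now played iteratively by the higher brackets $\lambda_n$.
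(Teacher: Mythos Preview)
Your proposal is correct and follows essentially the same approach as the paper. The paper works directly with the explicit recursive tree formulas for $\tau_{n+1}$ and $\ell_{n+1}$ and performs the induction on the arity $n$, whereas you package the same computation via the Perturbation Lemma applied at the level of the symmetric coalgebras and induct on the Neumann-series index; in both cases the substantive step is the inductive passage of a product $yz$ through the transfer formulas using the multi-derivation property of the $\lambda_p$ together with the semifull identities \eqref{eqA1}--\eqref{eqA2} at internal edges and \eqref{eqA3}--\eqref{eqA5} at root and leaves, exactly as you describe.
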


\begin{remark}
It is a well known fact that $L_\infty$ algebra structures can be transferred 
along contractions. For a proof of this fact, we refer to \cite{MR1932522}.
See also \cite{arXiv:1705.02880,MR2361936,arXiv:1807.03086}. 
The homotopy transfer theorem for $L_\infty$ algebra structures
is a direct consequence of the Goldman--Millson theorem \cite{arXiv:1406.1751,arXiv:1407.6735}.
\end{remark}
\begin{proof}
In the following computations, to improve readability, we shall omit to make signs explicit, and instead denote by $\pm_K$ the appropriate Koszul signs (which can be worked out explicitly as explained in the previous subsections).
	
By homotopy transfer formulas,
$\tau_1=\tau$, and $\tau_{n+1}, \ell_{n+1}$, $n\geq1$, are defined recursively by
\begin{multline*} \tau_{n+1}(x_1,\ldots,x_n,y) = \\ \sum_{\substack{p\geq2,i_1,\ldots,i_{p-1}\geq1,\,i_p \geq0 \\ i_1+\cdots+i_p=n}}\sum_{\sigma\in\operatorname{Sh}(i_1,\ldots,i_p)}\pm_K\frac{1}{(p-1)!} h\lambda_p(\tau_{i_1}(x_{\sigma(1)},\ldots,x_{\sigma(i_1)}),\ldots,\tau_{i_p+1}(x_{\sigma(n-i_p+1)},\ldots,x_{\sigma(n)},y))
\end{multline*}
and\begin{multline*} \ell_{n+1}(x_1,\ldots,x_n,y) = \\ \sum_{\substack{p\geq2,i_1,\ldots,i_{p-1}\geq1,\,i_p \geq0 \\ i_1+\cdots+i_p=n}}\sum_{\sigma\in\operatorname{Sh}(i_1,\ldots,i_p)}\pm_K\frac{1}{(p-1)!} \sigma\lambda_p(\tau_{i_1}(x_{\sigma(1)},\ldots,x_{\sigma(i_1)}),\ldots,\tau_{i_p+1}(x_{\sigma(n-i_p+1)},\ldots,x_{\sigma(n)},y)).
\end{multline*}
For notational simplicity in the following computations,
we will omit the $x_1,\ldots,x_n$ variables, and abbreviate the above
equations as
\begin{multline*} \tau_{n+1}(x_1,\ldots,x_n,y) = \sum_{\substack{p\geq2,i_1,\ldots,i_{p-1}\geq1,\,i_p \geq0 \\ i_1+\cdots+i_p=n}}\sum_{\sigma\in\operatorname{Sh}(i_1,\ldots,i_p)}\pm_K\frac{1}{(p-1)!} h\lambda_p(\tau_{i_1}(\ldots),\ldots,\tau_{i_p+1}(\ldots,y))
\end{multline*}
and
\begin{multline*} \ell_{n+1}(x_1,\ldots,x_n,y) = \sum_{\substack{p\geq2,i_1,\ldots,i_{p-1}\geq1,\,i_p \geq0 \\ i_1+\cdots+i_p=n}}\sum_{\sigma\in\operatorname{Sh}(i_1,\ldots,i_p)}\pm_K\frac{1}{(p-1)!} \sigma\lambda_p(\tau_{i_1}(\ldots),\ldots,\tau_{i_p+1}(\ldots,y)).
\end{multline*}

We shall prove first that $\tau_\infty$ satisfies the required compatibilities with the products.
By assumption $\tau_1=\tau$ is a morphism of graded algebras.
Proceeding inductively,
\begin{flalign*}
&\tau_{n+1}(x_1,\ldots,x_n,y z) = &\\ &\sum_{\substack{p\geq2,i_1,\ldots,i_{p-1}\geq1,i_p\geq0 \\
i_1+\cdots+i_p=n}}\sum_{\sigma\in\operatorname{Sh}(i_1,\ldots,i_p)}\pm_K \frac{1}{(p-1)!}h\lambda_p(\tau_{i_1}(\ldots), \ldots,\tau_{i_p+1}(\ldots,y z)) =&\\
&\sum_{\substack{p\geq2,i_1,\ldots,i_{p-1}\geq1,i_p,i_{p+1}\geq0 \\ i_1+\cdots+i_{p+1}=n}}\sum_{\sigma\in\operatorname{Sh}(i_1,\ldots,i_{p+1})} \pm_K\frac{1}{(p-1)!}h\lambda_p(\tau_{i_1}(\ldots), \ldots,\tau_{i_p+1}(\ldots,y) \tau_{i_{p+1}+1}(\ldots, z)) =&\\
&\sum_{\substack{p\geq2,i_1,\ldots,i_{p-1}\geq1,i_p,i_{p+1}\geq0 \\ i_1+\cdots+i_{p+1}=n}}\sum_{\sigma\in\operatorname{Sh}(i_1,\ldots,i_{p+1})} \pm_K\frac{1}{(p-1)!}h\left(\lambda_p(\tau_{i_1}(\ldots), \ldots,\tau_{i_p+1}(\ldots,y)) \tau_{i_{p+1}+1}(\ldots, z)\right) +& \\
&\pm_K\frac{1}{(p-1)!}h\left(\tau_{i_p+1}(\ldots,y) \lambda_p(\tau_{i_1}(\ldots), \ldots,\tau_{i_{p+1}+1}(\ldots, z))\right) = & \\
& \tau_{n+1}(x_1,\ldots,x_n,y) \tau_1(z) \pm_K \tau_1(y) \tau_{n+1}(x_1,\ldots,x_n,z)
+ \sum_{\substack{i,j\geq1 \\ i+j=n}}\sum_{\substack{p\geq2,i_1,\ldots,i_{p-1}\geq1,i_p\geq0 \\ i_1+\cdots+i_p=i}}
\sum_{\substack{q\geq2,j_1,\ldots,j_{q-1}\geq1,j_q\geq0 \\ j_1+\cdots+j_q=j}}&\\
&\sum_{\sigma\in\operatorname{Sh}(i_1,\ldots,j_q)} \pm_K\frac{1}{(p-1)!}h\left( \lambda_p(\tau_{i_1}(\ldots), \ldots,\tau_{i_p+1}(\ldots,y))\frac{1}{(q-1)!}h\lambda_q(\tau_{j_1}(\ldots),\ldots,\tau_{j_q+1}(\ldots,z)) \right) &\\
& \pm_K\frac{1}{(q-1)!}h\left( \frac{1}{(p-1)!}h\lambda_p(\tau_{i_1}(\ldots), \ldots,\tau_{i_p+1}(\ldots,y)) \lambda_q(\tau_{j_1}(\ldots),\ldots,\tau_{j_q+1}(\ldots,z)) \right) =&\\
& \tau_{n+1}(x_1,\ldots,x_n,y) \tau_1(z) \pm_K \tau_1(y) \tau_{n+1}(x_1,\ldots,x_n,z) + \sum_{\substack{i,j\geq1 \\ i+j=n}}\sum_{\substack{p\geq2,i_1,\ldots,i_{p-1}\geq1,i_p\geq0 \\ i_1+\cdots+i_p=i}}\sum_{\substack{q\geq2,j_1,\ldots,j_{q-1}\geq1,j_q\geq 0 \\ j_1+\cdots+j_q=j}}&\\&\sum_{\sigma\in\operatorname{Sh}(i_1,\ldots,j_q)} \pm_K\frac{1}{(p-1)!}h\lambda_p(\tau_{i_1}(\ldots), \ldots,\tau_{i_p+1}(\ldots,y))\frac{1}{(q-1)!}h\lambda_q(\tau_{j_1}(\ldots),\ldots,\tau_{j_q+1}(\ldots,z))=&\\& \sum_{i=0}^n\sum_{\sigma\in\operatorname{Sh}(i,n-i)}\pm_K\tau_{i+1}(\ldots,y)\tau_{n-i+1}(\ldots,z), &
\end{flalign*}
where we used the identities \eqref{eqA1}-\eqref{eqA2}.

Comparing the formula for $\tau_{n+1}$ and the one for $\ell_{n+1}$ at the beginning of the proof, to show that the $\ell_{n+1}$ are multi-derivations
we can follow the above computations, replacing the leftmost $h$ by $\sigma$ and using the identities \eqref{eqA3}-\eqref{eqA4}
(instead of \eqref{eqA1}-\eqref{eqA2}) when appropriate.
\end{proof}

\section{Shifted derived Poisson manifolds}
\label{onion}

\subsection{Shifted derived Poisson manifolds}

\begin{definition}\label{Def: -kshiftedDPM}
A $(-k)$-shifted derived Poisson manifold is a $\ZZ$-graded manifold $\cM$
whose sheaf of functions $\structuresheaf{\mathcal{M}} $ is a sheaf of degree $k$ derived Poisson algebras.
\end{definition}

Equivalently, a $(-k)$-shifted derived Poisson manifold is a $\ZZ$-graded manifold $\cM$ such that $\cinf{\cM}$,
the space of global functions on $\cM$, is endowed with a degree $k$ derived Poisson algebra structure
\[ \lambda_l : (\cinf{\cM})^{\otimes l}\to\cinf{\cM} ,\quad l\geq 1 .\]

\begin{example}
Let $\mfg$ be a finite dimensional $L_\infty$ algebra.
By extending the $L_\infty$ structure maps $\lambda_l$ (${l\geq 1}$) on $\mfg$ to the completed symmetric algebra $\hat{\Sbullet}(\mfg[k])$ via the Leibniz rule,
one obtains a degree $k$ derived Poisson algebra on $\hat{ \Sbullet }(\mfg[k]) =C^\infty(\mfg^\vee[-k])$.
Thus $\mfg^\vee[-k]$ is a $(-k)$-shifted derived Poisson manifold, called \emph{$(-k)$-shifted derived Lie Poisson manifold}.
In particular, $\mfg^\vee$ is a \emph{derived Lie Poisson manifold}.
See \cite{MR3633027}.

If $\mfg$ is an ordinary Lie algebra, $(\mfg[0])^\vee=\mfg^\vee$ admits a Poisson manifold structure, called Lie--Poisson structure.
On the other hand, we have the standard Schouten algebra
$\Lambda^\bullet\mfg$,
which corresponds to the $(-1)$-shifted Poisson manifold structure on $ \mfg^\vee[-1]$.
\end{example}

The following proposition relates the notion of shifted derived Poisson manifolds introduced above with the one defined by Pridham \cite{MR3653066,arXiv:1804.07622}.

Let $\cM$ be a $\ZZ$-graded manifold. By $\hat{\XX}^\bullet_{\poly}(\cM,n)$,
we denote the formally completed Schouten--Nijenhuis algebra of $n$-shifted
polyvector fields on $\cM$ (see Appendix C).

\begin{proposition}
\label{Prop: kHomotopyPoissonGivesPi}
A $(-k)$-shifted derived Poisson manifold is equivalent to a dg manifold $(\cM, Q)$ equipped with a formal series of $(k-2)$-shifted polyvector fields
$\pi=\sum_{l= 2}^\infty \pi_l$ satisfying the Maurer--Cartan equation:
\begin{equation}\label{MC equation of pi}
[Q,\pi]+\frac{1}{2}[\pi,\pi]=0,
\end{equation}
where $\pi_l\in\hat{\XX}^l_{\poly}(\cM,k-2)$ is of total
degree $(+1)$ in $\hat{\XX}^\bullet_{\poly}(\cM,k-2)[k-1]$.
\end{proposition}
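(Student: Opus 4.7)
My plan is to establish the equivalence by identifying the $l$-ary $L_\infty$ bracket $\lambda_l$ of the derived Poisson algebra structure on $\cinf{\cM}$ with the contraction by an $l$-polyvector field $\pi_l$, and then to show that the $L_\infty$ relations for the family $(\lambda_l)_{l \geq 1}$ translate exactly into the Maurer--Cartan equation \eqref{MC equation of pi} under the shifted Schouten--Nijenhuis bracket.

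First I would separate out the unary part: $\lambda_1\colon \cinf{\cM}\to\cinf{\cM}$ is a graded derivation of degree $+1$ squaring to zero, i.e.\ a homological vector field $Q\in\XX(\cM)$, and this contributes the ``$Q$'' part of the dg manifold. For each $l\geq 2$, Definition~\ref{Defn: kshiftedPinfinityalgebra} says that $\lambda_l$ is a graded multi-derivation on $\cinf{\cM}$, so by the standard correspondence recalled in Appendix~\ref{Appendix: shiftedpoly} it is the contraction with an $l$-polyvector field $\pi_l$ on $\cM$. The degree constraint $\abs{\lambda_l}=k(l-1)+2-l$ as a map $\cinf{\cM}^{\otimes l}\to\cinf{\cM}$ translates, after the bookkeeping of the weight-$l$ shift built into the definition of the $(k-2)$-shifted polyvector field algebra, into exactly the condition that $\pi_l$ have total degree $+1$ in $\hat{\XX}^\bullet_{\poly}(\cM,k-2)[k-1]$.

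Next, I would rewrite the $L_\infty$ relations as the Maurer--Cartan equation for $\pi=\sum_{l\geq 2}\pi_l$. This relies on two matchings between the Schouten--Nijenhuis algebra and multi-derivations: first, $[Q,\pi_l]$ corresponds to the graded commutator of $\lambda_1$ with $\lambda_l$ viewed as an $l$-multi-derivation; second, for $i,j\geq 2$, the bracket $[\pi_i,\pi_j]$ corresponds to the multi-derivation obtained by substituting the output of one of $\lambda_i,\lambda_j$ into one argument of the other and summing over shuffles. Collecting terms of fixed total weight, the $L_\infty$ relation at arity $n$ becomes the weight-$n$ component of $[Q,\pi]+\tfrac{1}{2}[\pi,\pi]=0$, and conversely the Maurer--Cartan condition unpacks weight-by-weight into the full $L_\infty$ family of identities.

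The main obstacle will be carefully tracking signs and degree conventions throughout the dictionary. The derived Poisson brackets come in two normalizations (the $q_l$ of Definition~\ref{def:derivedpoisson} and the $\lambda_l$ of Definition~\ref{Defn: kshiftedPinfinityalgebra}, related by d\'ecalage), while the Schouten--Nijenhuis bracket carries its own Koszul signs from the $(k-2)$-shift. The factor of $\tfrac{1}{2}$ in the Maurer--Cartan equation reflects the fact that each unordered pair $\{\pi_i,\pi_j\}$ with $i,j\geq 2$ is counted twice in the symmetric sum, mirroring the shuffle sum on the $L_\infty$ side at arity $i+j-1$. To keep the sign bookkeeping under control, I would first establish the correspondence in the unshifted (homotopy Schouten) case $k=2$, where no d\'ecalage or polyvector-shift intervenes, and then transport the general statement by applying the appropriate shift functor to both sides.
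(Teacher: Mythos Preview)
Your proposal is correct and follows the same route as the paper's proof: identify $\lambda_1$ with $Q$, identify each $\lambda_l$ for $l\geq 2$ with a $(k-2)$-shifted $l$-polyvector field via Lemma~\ref{Lem: polyasmultiderivative}, and then match the generalized Jacobi identities with the weight components of $[Q,\pi]+\tfrac12[\pi,\pi]$ using the graded-commutator description of the Schouten bracket in Lemma~\ref{eq: Recollects}.

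One remark on your sign strategy. The paper does not reduce to a special value of $k$ and transport; it inserts the explicit d\'ecalage sign
\[
\pi_l(f_1,\dots,f_l)=(-1)^{(l-1)\abs{f_1}^{[k]}+\cdots+\abs{f_{l-1}}^{[k]}}\lambda_l(f_1,\dots,f_l)
\]
to pass from skew-symmetric brackets on $\cinf{\cM}[-k]$ to symmetric multiderivations on $\cinf{\cM}[1-k]$, and then computes $(\pi_m\circ\pi_n)(f_1,\dots,f_{p})$ directly. Your ``do $k=2$ first, then shift'' idea is slightly imprecise as stated, since $\cinf{\cM}$ carries a fixed grading and cannot itself be shifted; what actually makes the argument $k$-independent is that, once you pass to the $q_l$ (the $L_\infty[1]$ picture on $\cinf{\cM}[1-k]$), the Koszul signs in the shuffle composition coincide on the nose with the $\epsilon^{[k-1]}$ signs appearing in the definition of the $(k-2)$-shifted Schouten $\circ$-product. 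Phrasing your shortcut this way would make the sign bookkeeping transparent without needing any auxiliary ``transport''.
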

\begin{proof}
Assume that $\cM$ is a $(-k)$-shifted derived Poisson manifold.
Let $\lambda_l : (\cinf{\cM})^{\otimes l}\to\cinf{\cM}$, $l\geq 1$,
be the multi-brackets of the corresponding shifted $L_\infty$ algebra.

The unary bracket $\lambda_1 : \cinf{\cM}\to\cinf{\cM}$ is
a derivation of degree $(+1)$ and squares to zero.
Thus it determines a homological vector field $Q$ on $\mathcal{M}$,
which is also denoted by $\pi_1$.
Since $(\lambda_l)_{l=1}^\infty$ defines an $L_\infty$ algebra structure
on ${\cinf{\cM}}[-k]$, for any $l\geq 2$, $\lambda_l$ is a
skew-symmetric multi-bracket on $\cinf{\cM}[-k]$, i.e.,
\[ \lambda_l(\cdots,f,g\cdots)=-\minuspower{ \abs{f}^{[k]} \abs{g}^{[k]}}\lambda_l(\cdots,g,f\cdots) .\]
Let
$ \pi_l(f_1,f_2,\cdots,f_l) = \minuspower{\star} \lambda_l(f_1,f_2,\cdots,f_l)$,
where $\star=(l-1)\degree{f_1}^{[k]} +(l-2)\degree{f_2}^{[k]} +\cdots+\degree{f_{l-1}}^{[k]}$,
then $\pi_l$ is a symmetric multilinear map on $\cinf{\cM}[-k+1]$, i.e.\
\[ \pi_l(\cdots,f,g\cdots)= \minuspower{ \abs{f}^{[k-1]} \abs{g}^{[k-1]}}\pi_l(\cdots,g,f\cdots) .\]
It is clear that $\pi_l$ is a multi-derivation in each argument.
According to Appendix \ref{Appendix: shiftedpoly}, $\pi_l$ can be considered
as a $(k-2)$-shifted $l$-polyvector field on $\cM$,
i.e.\ $\pi_l\in\XX^l_{\poly}(\cM,k-2) $. Moreover, its total degree is
\[ \totalabs{\pi_l}_{k-2}=\abs{\pi_l}-l(k-1)=\abs{\lambda_l}-l(k-1)=2-k .\]
Therefore, when being considered as an element in
${\XX}^l_{\poly}(\cM,k-2) [k-1]$, $\pi_l$ is of degree $(+1)$.
Let $\Lambda=Q+\pi=\sum_{l\geq 1}\pi_l$. Then
$\Lambda$ is of total degree $(+1)$. Let
\[ \Pi=\frac{1}{2}[\Lambda,\Lambda] .\]
According to Lemma \ref{eq: Recollects},
\[ \Pi=
\Lambda\circ\Lambda
=\sum_{m,n\geq 1}\pi_m\circ\pi_n \in\bigoplus_{l= 2}^\infty {\XX}^l_{\poly}(\cM,k-2) {[k-1]} .\]
The weight $p$ component of $\Pi$ is
\begin{equation}\label{Eqt: Pip}
\Pi_p=\sum_{m+n-1= p} \pi_m\circ \pi_n\,.
\end{equation}

For any $f_1,f_2,\cdots,f_{m+n-1}\in \cinf{\cM}$,
\begin{eqnarray*}
&& (\pi_m\circ \pi_n) (f_1,\cdots,f_{m+n-1})\\
&=& \sum_{\sigma\in\mathrm{Sh}(m,n-1)}{\epsilon^{[k-1]}}(\sigma)\pi_m(\pi_n(f_{\sigma(1)},\cdots,f_{\sigma(n)}),f_{\sigma(n+1)},
\cdots,f_{\sigma(m+n-1)})\\
&=&\sum_{\sigma\in\mathrm{Sh}(m,n-1)}\minuspower{\star}
\\&&\qquad\qquad{\epsilon^{[k-1]}}(\sigma)\lambda_m(\lambda_n(f_{\sigma(1)},\cdots,f_{\sigma(n)}),f_{\sigma(n+1)},
\cdots,f_{\sigma(m+n-1)})\\
&& {\scriptscriptstyle \mbox{\small where } \star=(n-1)\abs{f_{\sigma(1)}}^{[k]}+\cdots+\abs{f_{\sigma(n-1)}}^{[k]}
+(m-1)(\abs{f_{\sigma(1)}}^{[k]}+\cdots+\abs{f_{\sigma(n)}}^{[k]} +2-n)+(m-2)\abs{f_{\sigma(n+1)}}^{[k]}+\cdots+\abs{f_{\sigma(m+n-2)}}^{[k]}}\\
&=&\minuspower{(m-1)n+K}\sum_{\sigma\in\mathrm{Sh}(m,n-1)}\abs{\sigma}
{\epsilon^{[k]}}(\sigma)\lambda_m(\lambda_n(f_{\sigma(1)},\cdots,f_{\sigma(n)}),f_{\sigma(n+1)},\cdots,f_{\sigma(m+n-1)}).
\end{eqnarray*}
Here $K$ denotes the constant integer $(p-1)\abs{f_1}^{[k]}+(p-2)\abs{f_2}^{[k]}+\cdots+\abs{f_{p-1}}^{[k]}$. \\
Equation \eqref{MC equation of pi} is equivalent to the condition that
$\Pi_p=0$ for all $p\geq 2$.
According to Equation \eqref{Eqt: Pip}, the latter is equivalent to
that $\forall$ $f_1,\cdots,f_p\in\cinf{\cM}$,
\[ \sum_{m+n-1=p}\minuspower{(m-1)n}\sum_{\sigma\in\mathrm{Sh}(m,n-1)}\abs{\sigma}
{\epsilon^{[k]}}(\sigma)\lambda_m(\lambda_n(f_{\sigma(1)},\cdots,f_{\sigma(n)}),f_{\sigma(n+1)},\cdots,f_{\sigma(p)})=0 .\]
It is clear that this is exactly the generalized Jacobi identity of
the $L_\infty$ algebra structure on $\cinf{\cM}[-k]$. 

The converse is proved by going backwards. This concludes the proof.
\end{proof}

\begin{example}
A $(-1)$-shifted derived Poisson manifold is equivalent to a dg manifold
$(\cM, Q)$ equipped with a formal series
$\pi=\sum_{l= 2}^\infty \pi_l$ satisfying the Maurer--Cartan equation:
\begin{equation}\label{eq: paris}
[Q,\pi]+\frac{1}{2}[\pi,\pi]=0,
\end{equation}
where, for each $l\geq 2$,
$\pi_l\in \Gamma(S^l T_\cM)$ is of degree $(+1)$, and the bracket
in Equation (\refeq{eq: paris}) is the canonical Poisson bracket on
$\Gamma(\hat{S} T_\cM)$ being identified with the space of
formal polynomials on the
symplectic manifold $T^\vee_\cM$.
\end{example}

\begin{definition}
Let $\cM$ and $\cM'$ be $(-k)$-shifted derived Poisson manifolds
with structure maps
$\lambda_l: $ $(\cinf{\cM})^{\otimes l}$ $\to$ $\cinf{\cM}$ and $\lambda'_l: $ $(\cinf{\cM'})^{\otimes l}$ $\to$ $\cinf{\cM'}$, $ l\geq 1$, respectively.
A morphism of $(-k)$-shifted derived Poisson manifolds from
$\cM$ to $\cM'$ is a map of $\mathbb{Z}$-graded manifolds
$\phi: \cM\to \cM'$ together with a collection of maps
\[ \varphi_n: (\cinf{\cM'})^{\otimes n}\rightarrow \cinf{\cM} ,\quad n=2,3,\cdots \]
such that $\varphi_\infty$ $=$ $(\varphi_1=\phi^*, \varphi_2,\varphi_3,\cdots)$ is a morphism of degree $k$ derived Poisson algebras
from $(\cinf{\cM'},$ $\lambda'_1$, $\lambda'_2$, $\cdots$ , $\lambda'_n$, $\cdots)$
to $(\cinf{\cM},\lambda_1, \lambda_2, \cdots , \lambda_n, \cdots)$.
\end{definition}
In particular, $\phi: \cM\to\cM'$ is a map of dg manifolds.

\subsection{$L_\infty$ algebroids and shifted derived Poisson manifolds}

Below we follow Bruce \cite{MR2840338} for the notations,
who considered the $\ZZ_2$-case.

\begin{definition}
\label{Defn: Linfinityalgebroid}
An $L_\infty$ algebroid consists of a vector bundle $\cL\to\cM$
of $\ZZ$-graded manifolds together with
\begin{itemize}
\item a sequence of multilinear maps $\lambda_l : \Lambda^l \sections{\cL} \to \sections{\cL}$
of degree $(2-l)$, $l\geq 1$, called multi-brackets,
that determine an $L_\infty$ algebra structure on $\sections{\cL}$,
the space of sections of $\cL\to\cM$; and
\item a sequence of bundle maps $\rho_l: \Lambda^l\cL \to T_{\cM}$
of degree $(1-l)$, $l\geq 0$, called multi-anchor maps,
that induce a morphism of $L_\infty$ algebras from $\sections{\cL}$ to
$\XXa(\cM)$
\end{itemize}
such that the following compatibility condition is satisfied:
\begin{multline}\label{pbracketwithf}
\algdpbracket{a_1,a_2,\cdots,a_{l-1},fa_l} = \alphapminusone(a_1,a_2,\cdots,a_{l-1})(f)a_l \\
+ \minuspower{(l+\degree{a_1}+\cdots+\degree{a_{l-1}})\degree{f}} f\algdpbracket{a_1,a_2,\cdots,a_{l-1},a_l}
\end{multline}
$\forall l\geq 1$, $a_1,\cdots,a_l\in\sections{\cL}$, and $f\in C^\infty(\cM)$.
\end{definition}

\begin{remark}
Note that the image of $\rho_1$ may \emph{not} be integrable.
When $\cM$ is an ordinary smooth
manifold $M$ (being considered of degree zero)
and the vector bundle $L=\bigoplus_{i\geq 0}L^i\to M$ is a non-negative graded vector bundle,
due to degree reasons, all higher anchor maps $\rho_l$ vanish except for $\rho_1$,
which must be a bundle map $L^0\to T_M$.
Our notion of $L_\infty$ algebroids reduces to the one studied by Laurent--Gengoux et.\ al.\ \cite{camillesingularfoliation}.
In this case, $\rho_1 (L^0)$ defines a singular foliation on $M$.
When $L$ is concentrated in degree $0$, it becomes a usual Lie algebroid over $M$.

We also note that various forms of $L_\infty$ algebroids have appeared
in the literature. We refer the reader to
\cite{MR2757715, MR2695305, MR1854642, MR2966944, MR3300319, MR3090103, MR3631929, MR3584886}
and the references there on the related topic.
\end{remark}

The following proposition extends
Theorem 1 and Corollary 2 in \cite{MR2840338} to the $\ZZ$-graded context.

\begin{proposition}
\label{Thm: LinfinitytotshiftedPoisson}
Let $\cL\to\cM$ be a vector bundle of $\ZZ$-graded manifolds,
and let $k\in\mathbb{Z}$ be a fixed integer.
The following statements are equivalent.
\begin{enumerate}
\item The vector bundle $\cL\to\cM$ is an $L_\infty$ algebroid.
\item The space $\sections{\hat{\Sbullet}(\cL[k])}$ is a degree $k$
derived Poisson algebra, whose $l$th-bracket
$\lambda_l : \Lambda^l \sections{\cL} \to \sections{\cL}$ is
of weight $(1-l)$.
\item The graded manifold $ \cL^\vee[-k] $ is a $(-k)$-shifted derived
Poisson manifold, where the weight of the $l$th-bracket
on $\cinf{\cL^\vee[-k]}$ is $(1-l)$.
\end{enumerate}
\end{proposition}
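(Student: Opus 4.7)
The plan is to establish the three-way equivalence by first noting that (2) and (3) are essentially tautologically equivalent, and then proving (1) $\Leftrightarrow$ (2) by a standard extension-and-restriction argument, adapting Bruce's $\ZZ_2$-graded proof \cite{MR2840338} to the $\ZZ$-graded setting.

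The equivalence (2) $\Leftrightarrow$ (3) reduces to the identification, explained in Appendix~\ref{Appendix: shiftedpoly}, of $\sections{\hat{\Sbullet}(\cL[k])}$ with $\cinf{\cL^\vee[-k]}$ as a sheaf of graded commutative algebras: the pairing between $\cL$ and $\cL^\vee$ together with the compatible shifts identify symmetric powers of $\cL[k]$ with formal polynomial functions on the total space $\cL^\vee[-k]$, and the symmetric-power grading on the algebra side matches the fiberwise polynomial weight on the manifold side. Under this identification, a degree $k$ derived Poisson algebra whose $l$-th bracket has weight $(1-l)$ is exactly the data of a $(-k)$-shifted derived Poisson manifold structure on $\cL^\vee[-k]$ with the same weight constraint.

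For (1) $\Rightarrow$ (2) I extend the multi-brackets and multi-anchors of the $L_\infty$ algebroid to weight $(1-l)$ multi-derivations on $\sections{\hat{\Sbullet}(\cL[k])}$ by declaring, on $l$ arguments drawn from $\sections{\cL}$ (weight $1$) or $\cinf{\cM}$ (weight $0$):
\begin{itemize}
\item $\lambda_l$ on $l$ sections coincides with the $L_\infty$ bracket of $\sections{\cL}$;
\item $\lambda_l$ on $l-1$ sections and one $f\in\cinf{\cM}$ equals $\rho_{l-1}(a_1,\ldots,a_{l-1})(f)$ with the appropriate Koszul sign supplied by d\'ecalage;
\item $\lambda_l$ vanishes whenever at least two arguments lie in $\cinf{\cM}$ (forced by weight);
\end{itemize}
and extend uniquely to all of $\sections{\hat{\Sbullet}(\cL[k])}$ by the graded Leibniz rule in each slot. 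The consistency of the extension across symmetric powers is precisely the compatibility condition \eqref{pbracketwithf} in the definition of an $L_\infty$ algebroid. The higher Jacobi identities (equivalently $Q\circ Q=0$) then decompose by the weight of the arguments: on $l$ sections they reduce to the generalized Jacobi identities of the $L_\infty$ algebra $\sections{\cL}$; on $l-1$ sections plus one function they reduce to the condition that the sequence $(\rho_l)_{l\geq 0}$ defines an $L_\infty$ morphism from $\sections{\cL}$ to $\XX(\cM)$; and the contributions with two or more functions vanish by the weight constraint. Conversely, (2) $\Rightarrow$ (1) is obtained by restriction: $\lambda_l$ on $\sections{\cL}^{\otimes l}$ supplies the $L_\infty$ brackets of the algebroid, the weight-$(1-l)$ bracket on $\sections{\cL}^{\otimes(l-1)}\otimes\cinf{\cM}$ supplies the multi-anchor $\rho_{l-1}$ (the derivation property in the function slot guaranteeing values in $\XX(\cM)$), and \eqref{pbracketwithf} becomes the Leibniz rule of the derived Poisson structure in the last slot.

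The main obstacle is entirely bookkeeping: carefully tracking Koszul signs through the d\'ecalage identifying antisymmetric brackets on $\sections{\cL}$, symmetric brackets on $\sections{\hat{\Sbullet}(\cL[k])}$ (Definition \ref{Defn: kshiftedPinfinityalgebra}), and polyvector fields on $\cL^\vee[-k]$ as in Proposition~\ref{Prop: kHomotopyPoissonGivesPi}, in order to verify that the weight decomposition of the Jacobi identities matches the algebroid axioms term by term. No new ideas beyond those of Bruce are required; the only modifications are the replacement of $\ZZ_2$-degrees by $\ZZ$-degrees and the keeping track of the global degree shift by $k$.
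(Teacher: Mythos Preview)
Your proposal is correct and follows essentially the same approach as the paper: the paper also notes that (2) and (3) are equivalent by definition, and proves (1) $\Leftrightarrow$ (2) by extending the $L_\infty$ brackets and multi-anchors to $\sections{\hat{\Sbullet}(\cL[k])}$ via exactly the generating relations you wrote down (Equations~\eqref{eq: Opera}--\eqref{eq: Louver}), with the converse obtained by restriction. Your write-up simply supplies more of the weight-by-weight and sign bookkeeping than the paper's terse ``it is easy to check'' and ``going backwards.''
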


Recall that elements in $\sections{S^m(\cL[k])}$ are of weight $m$. 
The weight of the $l$th-bracket
\[ \lambda_l : \sections{ { S }^{\bullet}(\cL [k])}\times \cdots\times\sections{S^{\bullet}(\cL [k])}\to\sections{S^{\bullet}(\cL [k])} \]
is the difference of weights on both sides.

\begin{proof}
Note that (2) and (3) are clearly equivalent by definition. Below,
we prove the equivalence between (1) and (2).

Assume that $\cL\to \cM$ is an $L_\infty$ algebroid.
Then the $L_\infty$ algebra structure on $\Gamma (\cL)$ together with
the multi-anchor maps defines a degree $k$ derived Poisson algebra,
via the Leibniz rule, on
$\sections{\hat{ \Sbullet }(\cL [k])}$
by the following generating relations:
\begin{eqnarray}
\lambda_l ({a_1,\cdots,a_{l-1},f})&=&\rho_{l-1}(a_1,\cdots,a_{l-1})f, \qquad (l\geq 1)
\label{eq: Opera}
\\
\lambda_l(f,g,\cdots )&=&0, \qquad (l\geq 2), \label{eq: Louver}
\end{eqnarray}
$\forall a_1,\cdots,a_l\in \sections{\cL[k]}$, and $f,g\in \cinf{\cM}$.
It is easy to check that its	$l$th bracket is of weight $(1-l)$.

The converse can be proved by going backwards using Equations
\eqref{eq: Opera}-\eqref{eq: Louver}.
\end{proof}

\begin{remark}
We note that Vitagliano has already observed a similar result in
the algebraic context
of $LR_\infty$ algebras. For details, see \cite{MR3300319, MR3313214}.
\end{remark}

Let $\cL\to \cM$ be an $L_\infty$ algebroid with structure maps $(\lambda_l)_{l\geq 1}$ and $(\rho_l)_{l\geq 0}$ as in Definition
\ref{Defn: Linfinityalgebroid}.
The unary bracket $\lambda_1: \sections{\cL}\to\sections{\cL}$ and
$\rho_0\in \XXa (\cM)$ are compatible:
\[ \lambda_1(fa)=\rho_0(f)a+\minuspower{\abs{f}}f\lambda_1(a),\quad\forall a\in\sections{L},f\in\cinf{\cM} .\]
Introduce a dual map
$\lambda_1^\vee: \sections{\cL^\vee}\to\sections{\cL^\vee}$ by
\[ \pairing{\lambda_1^\vee(\xi)}{a}=\rho_0\pairing{\xi}{a}-\minuspower{\abs{\xi}}\pairing{\xi}{\lambda_1(a)},\quad\forall\xi\in\sections{\cL^\vee}, \forall a\in\sections{L} .\]
Since $\lambda_1^2=0$, it follows that $(\lambda_1^\vee)^2=0$. 
Hence $(\sections{\cL^\vee},\lambda_1^\vee)$ is a cochain complex.

\begin{definition}\label{Def: morphismLinfalgebroids}
Let $\cL_1\to \cM_1$ and $\cL_2\to \cM_2$ be
$L_\infty$ algebroids.
\begin{enumerate}
\item A morphism of $L_\infty$ algebroids
from $\cL_1\to \cM_1$ to $\cL_2\to \cM_2$ is a
sequence of bundle maps
\[ \begin{tikzcd}
\Lambda^p \cL_1 \arrow[d] \arrow[r, "\phi_p"] & \cL_2 \arrow[d] \\
\cM_1\arrow[r, "\phi_0"] & \cM_2
\end{tikzcd} \]
for $p=1,2,\cdots$, such that
the induced map
$\phi : \cL_1 [1]\to \cL_2 [1]$ is a map of dg manifolds.\footnote{This means that the induced morphism of commutative algebras
over $\phi_0^\vee: \cinf{\cM_2}\to \cinf{\cM_1}$:
\[ \phi^\vee : \cinf{\cL_2[1]}=\sections{\hat{S} (\cL_2^\vee[-1])}\to\cinf{\cL_1[1]}=\sections{\hat{S} (\cL_1^\vee[-1])} \]
commutes with the homological vector fields $Q_1$ and $Q_2$: $Q_2\circ \phi^\vee=\phi^\vee \circ Q_1$.
Here $Q_1$ and $Q_2$ are, respectively, homological vector fields on $\cL_1[1]$ and $\cL_2[1]$
corresponding to the $L_\infty$ algebroid structures as in Proposition \ref{Prop: LinfinityalgebroidandQ}.}
\item A quasi-isomorphism from $\cL_1\to \cM_1$ to $\cL_2\to \cM_2$
is a morphism of $L_\infty$ algebroids $\phi: \cL_1\to \cL_2$ such that
$\phi_1^\vee: \sections{\cL_2^\vee}\to \sections{\cL^\vee_1}$
is a quasi-isomorphism of cochain complexes.
\end{enumerate}
\end{definition}

The following fact can be easily verified.
\begin{proposition}
Let $\cL_1\to \cM $ and $\cL_2\to \cM $ be
$L_\infty$ algebroids. Let \[ \begin{tikzcd}
\Lambda^p \cL_1 \arrow[d] \arrow[r, "\phi_p"] & \cL_2 \arrow[d] \\
\cM \arrow[r, "\mathrm{Id}"] & \cM
\end{tikzcd} \]
(for $p=1,2,\cdots$) be a morphism of $L_\infty$ algebroids.
Then there induces a morphism of $(-k)$-shifted derived Poisson manifolds from $ \cL_2^\vee[-k]$ to $ \cL_1^\vee[-k]$.
\end{proposition}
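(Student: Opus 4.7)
The plan is to reduce the statement to Proposition~\ref{Thm: LinfinitytotshiftedPoisson} and then to construct the required morphism of derived Poisson algebras directly from the bundle maps $(\phi_p)_{p\ge1}$. By Proposition~\ref{Thm: LinfinitytotshiftedPoisson}, the two $L_\infty$-algebroid structures correspond to degree $k$ derived Poisson algebra structures on $B_i:=\sections{\hat{S}(\cL_i[k])}=\cinf{\cL_i^\vee[-k]}$ whose $l$th bracket has weight $(1-l)$ and is determined (via Leibniz) by its values on the generating subspace $\sections{\cL_i}\oplus\cinf{\cM}$. By definition, a morphism of $(-k)$-shifted derived Poisson manifolds from $\cL_2^\vee[-k]$ to $\cL_1^\vee[-k]$ is precisely a morphism of degree $k$ derived Poisson algebras from $B_1$ to $B_2$ in the sense of Definition~\ref{def: morphisms}, so it is enough to produce Taylor coefficients $f_n:B_1^{\otimes n}\to B_2$ of such a morphism.

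First I would specify the Taylor coefficients on generators. Set $f_1|_{\cinf{\cM}}=\operatorname{id}$ (reflecting that $\phi$ covers the identity on $\cM$) and $f_1|_{\sections{\cL_1}}=\phi_1$; for $n\ge2$, let $f_n|_{\Lambda^n\sections{\cL_1}}$ be the d\'ecalage of $\phi_n:\Lambda^n\cL_1\to\cL_2$, and declare $f_n$ to vanish as soon as any argument lies in $\cinf{\cM}$. This prescription is the Taylor-coefficient analogue of Equations~\eqref{eq: Opera}--\eqref{eq: Louver} used in the proof of Proposition~\ref{Thm: LinfinitytotshiftedPoisson}. Propagating these values to arbitrary products in the symmetric algebra $\hat{S}(\cL_1[k])$ by iterating the multiplicative rule~\eqref{eqn: derived morphism}, and imposing graded symmetry, yields well-defined maps $f_n:B_1^{\otimes n}\to B_2$ that satisfy~\eqref{eqn: derived morphism} by construction.

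Finally I would verify the $L_\infty$-morphism identity $F\circ Q_{B_1}=Q_{B_2}\circ F$ relating the coalgebra lift $F$ of $f_\infty=(f_n)$ with the codifferentials corresponding to the two derived Poisson structures. Since $Q_{B_1}$ and $Q_{B_2}$ have multi-derivation Taylor coefficients and the $f_n$ are themselves extended as multi-derivations, this identity reduces, by an induction on the symmetric weight of the input, to its verification on generators. On generators in $\sections{\cL_1}$ the required identity is exactly the $L_\infty$-morphism relation satisfied by $(\phi_p)$ at the level of sections of $\cL$, and on generators in $\cinf{\cM}$ it is the compatibility of the $(\phi_p)$ with the multi-anchors $(\rho_l)$; both hold because, by Definition~\ref{Def: morphismLinfalgebroids}, $\phi:\cL_1[1]\to\cL_2[1]$ is a map of dg manifolds. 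The one non-trivial bookkeeping issue will be the sign d\'ecalage between the antisymmetric $\lambda_l$ and the symmetric $q_l$ viewpoints; this is handled in exactly the same way as in the proof of Proposition~\ref{Thm: LinfinitytotshiftedPoisson} and presents no essential obstacle.
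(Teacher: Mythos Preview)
Your proposal is correct and follows essentially the same route as the paper: both arguments take the $C^\infty(\cM)$-multilinear maps $\phi_p$ on sections, extend them by the multiplicative rule~\eqref{eqn: derived morphism} to Taylor coefficients $f_n$ on all of $\sections{\hat S(\cL_1[k])}$, and then use the $L_\infty$-algebroid morphism condition (Definition~\ref{Def: morphismLinfalgebroids}) to verify the $L_\infty$-morphism identity on generators. Your write-up is in fact more explicit than the paper's about the vanishing of $f_n$ on $\cinf{\cM}$-arguments and about reducing the coalgebra relation $FQ_{B_1}=Q_{B_2}F$ to generators, but the underlying strategy is the same.
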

\begin{proof}
Consider the morphism of $L_\infty$ algebroids
from $\cL_1\to \cM_1$ to $\cL_2\to \cM_2$ as in Definition \ref{Def: morphismLinfalgebroids}. In general, $\phi_0: \cM_1\to \cM_2$ is a map of graded manifold. When $\cM_1=\cM_2=\cM$ and $\phi_0=\mathrm{Id}$, there induces a family of $\cinf{\cM}$-multilinear maps
\[ \tilde{\phi}_p: \Lambda^p \sections{\cL_1}\to \sections{\cL_2} .\]
As $({\phi_p})_{p\geq 1}$ consists a morphism of $L_\infty$ algebroids
from $\cL_1\to \cM $ to $\cL_2\to \cM $, the above $({\tilde{\phi}_p})_{p\geq 1}$ defines a morphism of $L_\infty$ algebras from $\sections{\cL_1}$ to $\sections{\cL_2}$. Moreover, each $\tilde{\phi}_p$ is skew-symmetric.

Then one extends $({\tilde{\phi}_p})_{p\geq 1}$ to a family of
skew-symmetric maps: 
\[ \tilde{\phi}_p: \Lambda^p \big( \sections{\hat{\Sbullet}(\cL_1[k]) } [-k]
\big) \to \sections{\hat{\Sbullet}(\cL_2[k]) }[-k] ,\]
satisfying  Equation \eqref{eqn: derived morphism}.
Hence we obtain a morphism of degree $k$ derived Poisson algebras from $\sections{\hat{\Sbullet}(\cL_1[k])}$ to $\sections{\hat{\Sbullet}(\cL_2[k])}$, or equivalently, a morphism of $(-k)$-shifted derived Poisson manifolds from $\cL^\vee_2[-k]$ to $\cL^\vee_1[-k]$.
\end{proof}

\section{The $(+1)$-shifted derived Poisson algebra arising from a Lie pair}
\label{dandelion}

\subsection{First construction: $L_\infty$ algebroid arising from a Lie pair}
\label{Sec: resultofoutaliepair}
By a \emph{Lie pair} $(\LADL,\LADA)$, we mean an ordinary (non-graded) Lie algebroid $(\LADL,\baL{\cdot}{\cdot},\anchorL)$
over an ordinary (non-graded) smooth manifold $M$, together with a Lie subalgebroid
$(\LADA,\baA{\cdot}{\cdot},\anchorA)$ of $\LADL$ over the same base $M$.

For convenience, let us denote the quotient vector bundle $L/A$ by $B$.
Denote $\quotientmapLB : L\to B$ the projection map.
Note that $B$ is naturally an $A$-module:
\[ \nabla^{\Bott}_a b=\quotientmapLB[a,l]_L ,\] where $a\in\sections{A}$, $b\in\sections{B}$ and $l\in\sections{L}$
satisfying $\quotientmapLB(l)=b$.
The flat $A$-connection $\nabla^{\Bott}$ on $B$ is also known as the Bott
connection \cite{MR0362335,MR3439229}.

Let
\begin{eqnarray*}
&& \OmegaAsingle{\bullet}=\oplus_{k=0}\sections{\Lambda^k A^\vee}[-k],\\
&& \Omega^\bullet_{\LADA}(\Lambda^{\bullet}\moduleB)=
\oplus_{k, l} \sections{\Lambda^k \LADAs \otimes \Lambda^{l}\moduleB}[-k+l].
\end{eqnarray*}
Note that elements in $\Omega^k_{\LADA}(\Lambda^l \moduleB)$ are of degree $k-l$.
Denote by
\begin{equation}\label{eq: Paris}
\dAB: \qquad\Omega^k_{\LADA}(\Lambda^{l}\moduleB)\to\Omega^{k+1}_{\LADA}(\Lambda^{l}\moduleB)
\end{equation}
the standard Chevalley--Eilenberg differential corresponding to the Bott $A$-connection on $\Lambda^\bullet B$.

\begin{proposition}\label{Prop: main1}
Let $(L, A)$ be a Lie pair. Any splitting $\splitting: B\to L$ of the exact sequence
\begin{equation}\label{Eqt: exactseq}
0\to\LADA \xto{\embeddingi}\LADL \xto{\quotientmapLB} B\to 0
\end{equation}
induces an $L_\infty$ algebroid structure on the $\ZZ$-graded vector bundle $A[1]\oplus B \to A[1]$.
\end{proposition}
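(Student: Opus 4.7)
The plan is to invoke the dg-manifold characterization of $L_\infty$ algebroids, Proposition \ref{Prop: LinfinityalgebroidandQ}: an $L_\infty$ algebroid structure on $\cL = A[1]\oplus B \to A[1]$ is the same data as a homological vector field $Q$ on $\cL[1]$ tangent to the zero section $\cM = A[1]$. With the degree conventions of the paper, shifting the $B$-fibers up by one gives $\cL[1] = A[1]\oplus B[1]$ as a graded manifold over $M$.

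First I would use the splitting $\splitting: B\to L$ together with the inclusion $\embeddingi: A\hookrightarrow L$ to obtain a vector bundle isomorphism $\embeddingi+\splitting: A\oplus B\xrightarrow{\sim} L$ over $M$; shifting fibers up by one yields an isomorphism of graded manifolds $A[1]\oplus B[1]\cong L[1]$. Since $L$ is an ordinary Lie algebroid, $L[1]$ carries its canonical homological vector field $Q_L$ whose action on $\cinf{L[1]} = \sections{\Lambda^\bullet L^\vee}$ is the Chevalley--Eilenberg differential $\dL$. Transporting $Q_L$ along the graded-manifold isomorphism above produces a candidate homological vector field $Q$ on $\cL[1]$, automatically of degree $(+1)$ and squaring to zero.

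The only point to check is that $Q$ is tangent to the zero section $A[1]\subset\cL[1]$; this is where the Lie subalgebroid hypothesis on $A$ enters. The defining ideal of $A[1]$ in $\cinf{\cL[1]}$ is generated by the fiber coordinates of $\cL$, which under the identification $\cL[1]\cong L[1]$ correspond to the annihilator $A^\perp\subset\sections{L^\vee}$ (the dual of the splitting identifies $B^\vee$ with $A^\perp$). For $\beta\in A^\perp$ and $a_1,a_2\in\sections{A}$, the Cartan formula
\[ (\dL\beta)(a_1,a_2) = \anchorL(a_1)\beta(a_2) - \anchorL(a_2)\beta(a_1) - \beta([a_1,a_2]_{\LADL}) \]
vanishes: the first two terms because $\beta$ annihilates $A$, and the last because $[a_1,a_2]_{\LADL}\in\sections{A}$ by the Lie subalgebroid property. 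The Leibniz rule for $\dL$ propagates this to all exterior powers, so $Q$ preserves the ideal generated by $A^\perp$ and is therefore tangent to $A[1]$. Proposition \ref{Prop: LinfinityalgebroidandQ} then delivers the claimed $L_\infty$ algebroid structure.

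The main obstacle is conceptual rather than computational: one must pin down the degree-shifting conventions so that $\cL[1]$ is genuinely identified with $L[1]$, and interpret ``tangent to the zero section'' as preservation of the correct ideal $(A^\perp)$. Once these identifications are in place, the tangency reduces to the defining property of a Lie subalgebroid. A useful by-product of this approach is transparency: the resulting $L_\infty$ algebroid has at most binary multi-brackets and multi-anchors (since $Q_L$ is determined by the binary bracket of $L$), with all structure maps $(\lambda_l,\rho_l)$ for $l\leq 2$ readable from the weight decomposition of $\dL$ in the $B^\vee$-fiber coordinates, and $\lambda_l=0=\rho_l$ for $l\geq 3$.
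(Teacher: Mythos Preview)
Your argument is correct and is essentially the paper's proof: transport the Chevalley--Eilenberg vector field $Q_L$ through the splitting-induced isomorphism $\cL[1]\cong L[1]$ and invoke Proposition~\ref{Prop: LinfinityalgebroidandQ}, with the tangency to $A[1]$ coming from $A$ being a Lie subalgebroid. Your Cartan-formula check is just a slightly more explicit version of what the paper calls ``simple to see.''

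The closing remark, however, is wrong and worth correcting because it feeds directly into the next proposition. The $L_\infty$ algebroid on $\cL\to A[1]$ is \emph{not} at most binary. The weight controlling the arity of the multi-brackets is the $B^\vee$-weight (the fiber grading for $\cL\to A[1]$), not the total $L^\vee$-weight. Under the splitting, $d_L$ decomposes as $d_A^{\Bott}+d_B^{\Delta}+d_\beta$ of $B^\vee$-weights $0,1,2$, where $d_\beta$ encodes $\beta(u,v)=\pr_A[u,v]_L$. The weight-$2$ piece produces a nontrivial binary anchor $\rho_2(u_1,u_2)=\iota_{\beta(u_1,u_2)}$, and then the compatibility~\eqref{pbracketwithf} forces $\lambda_3$ to be nonzero on $\sections{\cL}=\Omega_A^\bullet(B)$: for $\omega\in\Omega_A^\bullet$ one has $\lambda_3(u_1,u_2,\omega\,u_3)=\iota_{\beta(u_1,u_2)}(\omega)\,u_3$, even though $\lambda_3(u_1,u_2,u_3)=0$ on bare sections $u_i\in\sections{B}$. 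This ternary piece is exactly what yields the ternary bracket in Proposition~\ref{Prop: main1continue}; it vanishes only in the matched-pair situation $\beta=0$. So ``$Q_L$ comes from a binary bracket on $L$'' does not imply the transferred $L_\infty$ algebroid is binary.
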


\begin{proof}
A splitting of \eqref{Eqt: exactseq} is a pair of maps
$\splitting : \moduleB\to\LADL$ and $\pA : \LADL\to\LADA$ such that
$\qB\rond \splitting =\id_{\moduleB}$, $\pA\rond \embeddingi=\id_{\LADA}$ and $\embeddingi\rond \pA+\splitting \rond \qB=\id_{\LADL}$:
\[ \begin{tikzcd}
0 \arrow[r, shift left] &
A \arrow[l, shift left] \arrow[r, shift left, "\embeddingi"] &
L \arrow[l, shift left, "\pA"] \arrow[r, shift left, "\qB"] &
B \arrow[l, shift left, "\splitting"] \arrow[r, shift left] &
0 \arrow[l, shift left]
\end{tikzcd} .\]

We therefore obtain an isomorphism of vector bundles over $M$:
\begin{equation}\label{I}
\kappa: \LADA\oplus\moduleB\stackrel{\simeq}{\longrightarrow}\LADL,\quad\qquad(a,b)\mapsto\embeddingi(a)+\splitting(b) ,
\end{equation}
for all $a\in\LADA$ and $b\in\moduleB$.
Consider the $\ZZ$-graded vector bundle $\cL\to A[1]$,
where $\cL: =A[1]\oplus B$. Note that the base of $\cL$ is $A[1]$.
Then $\cL[1]= A[1]\oplus B[1]\cong L[1]$ according to \eqref{I}.

Since $\LADL$ is a Lie algebroid, the standard Chevalley--Eilenberg differential
$\dL: \sections{\Lambda^\bullet L^\vee}\to\sections{\Lambda^{\bullet+1} L^\vee}$
defines a homological vector field $\QL$ on $\LADL[1]$.
Via the identification \eqref{I}, one obtains a homological vector field $Q$ on $\cL[1]$.
Since $\LADA$ is a Lie subalgebroid of $\LADL$, it is simple to see that the homological vector field $Q$ on $\cL[1]$
is indeed tangent to the zero section $\LADA[1]\subset \cL[1]$, whose restriction can be
identified with the Chevalley--Eilenberg differential $\dA$ on $\LADA[1]$.
Thus by Proposition \ref{Prop: LinfinityalgebroidandQ},
$\cL=\LADA[1]\oplus\moduleB\to\LADA[1] $ is indeed an $L_\infty$ algebroid.
\end{proof}

\begin{remark}
According to Proposition \ref{Prop: cannonicalLinfyDiracDual},
different choices of splittings
give rise to isomorphic $L_\infty$ algebras $\sections{\cL}\cong\OmegaAB$,
where the isomorphism is given by a collection of multilinear maps
\[ \varphi_n: \wedge^n \OmegaAB\rightarrow \OmegaAB,\quad n=1,2,\cdots \]
with $\varphi_1=\id$.
However since $\varphi_n$ ($n\geq 2$) is
not $\cinf{A[1]}=\Omega^\bullet_A$-multilinear,
$\varphi_n$ does not correspond to a bundle map $\wedge^n \cL \to \cL$.
Therefore, the corresponding $L_\infty$ algebroid structures
on the $\ZZ$-graded vector bundle $\cL=A[1]\oplus B \to A[1]$ are ``not''
isomorphic.
\end{remark}

As an immediate consequence of Proposition \ref{Prop: main1}, we have the following

\begin{proposition}\label{Prop: main1continue}
Let $(L, A)$ be a Lie pair. Then any splitting of \eqref{Eqt: exactseq}
induces a degree $(+1)$ derived Poisson algebra structure on
$ \tot \OmegaAwedgeB$,
where the multiplication is the wedge product,
and the {shifted} $L_\infty$ brackets are given as follows:
\begin{itemize}
\item[(1)]
The unary bracket $l_1$ is the Chevalley--Eilenberg differential $\dAB$ as in Equation \eqref{eq: Paris}.
\item[(2)]
The binary bracket
\[ \binarybracket{\argument}{\argument}_2 : \OmegaAdouble{i}{j} \times\OmegaAdouble{p}{l}\to\OmegaAdouble{i+p}{j+l-1} \]
is generated by the following relations:
\begin{itemize}
\item[a)] $ \binarybracket{u}{v}_2=\pr_B\baL{u}{v}$,\ \ \ $\forall u,v\in\sections{\moduleB}$;
\item[b)] $\binarybracket{u}{\omega}_2 =\pr_{\LADAs}(\LieDerivative_{u}{\omega})$, \ \ \ $\forall u\in \sections{\moduleB}$, $\omega\in \sections{\LADAs}$;
\item[c)] $\binarybracket{u}{f}_2=\anchorL(u)f$, \ \ \ $\forall u\in \sections{\moduleB}$, $ f\in \cinf{M}$;
\item[d)] $\binarybracket{\omega_1}{\omega_2}_2=0$,\ \ \ $\forall \omega_1,\omega_2\in\OmegaAsingle{\bullet}$.
\end{itemize}
\item[(3)] The ternary bracket
\[ \trinarybracket{\argument}{\argument}{\argument}_3 :
\OmegaAdouble{i}{j}\times\OmegaAdouble{p}{l}\times\OmegaAdouble{r}{s}\to\OmegaAdouble{i+p+r-1}{j+l+s-2} \]
is $\cinf{M}$-linear in each entry and generated by the following relations:
\begin{itemize}
\item[a)] $\trinarybracket{\argument}{\argument}{\argument}_3$
vanishes when being restricted to $\sections{\moduleB} \times \sections{\moduleB} \times \sections{\moduleB}$,
$\sections{\moduleB} \times \sections{\LADAs} \times \sections{\LADAs}$,
and $\sections{\LADAs} \times \sections{\LADAs} \times \sections{\LADAs}$;
\item[b)]
$\trinarybracket{u}{v}{\omega}_3=\pairing{\pr_{A}[{u},{v}]_L}{\omega}$, for all $u,v\in \sections{\moduleB}$ and $\omega\in \sections{\LADAs}$.
\end{itemize}
\item[(4)] All the rest of higher brackets vanish.
\end{itemize}
Here we have used the identification $L\cong A\oplus B$ and $L^\vee\cong A^\vee \oplus B^\vee$.
\end{proposition}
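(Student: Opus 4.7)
The plan is to derive Proposition \ref{Prop: main1continue} as a direct consequence of Proposition \ref{Prop: main1} combined with the general correspondence of Proposition \ref{Thm: LinfinitytotshiftedPoisson} between $L_\infty$ algebroid structures on $\cL \to \cM$ and degree $k$ derived Poisson algebra structures on $\sections{\hat{S}(\cL[k])}$, specialized here to $k = +1$.

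First, by Proposition \ref{Prop: main1}, the splitting $\splitting$ of \eqref{Eqt: exactseq} equips $\cL = A[1] \oplus B \to A[1]$ with an $L_\infty$ algebroid structure, obtained by pulling back the homological vector field $\QL$ on $L[1]$ (encoding the Chevalley--Eilenberg differential $\dL$) along the isomorphism $\kappa : \cL[1] \xrightarrow{\simeq} L[1]$ built from \eqref{I}. Applying Proposition \ref{Thm: LinfinitytotshiftedPoisson} with $k = 1$ then yields a degree $(+1)$ derived Poisson algebra structure on $\sections{\hat{S}(\cL[1])}$, whose $l$-th bracket has weight $(1-l)$.

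Next, I would identify $\sections{\hat{S}(\cL[1])}$ with $\tot \OmegaAwedgeB$ as a graded commutative algebra. Over the graded base $A[1]$ (with $\cinf{A[1]} = \Omega^\bullet_A$), the fiber of $\cL[1]$ is $B[1]$, concentrated in degree $-1$, so graded symmetric powers become exterior powers. The weight-$l$ component of $\sections{\hat{S}(\cL[1])}$ is therefore $\bigoplus_k \Omega^k_A(\Lambda^l B)$ with $\Omega^k_A(\Lambda^l B)$ in total degree $k-l$; summing over $l$ reproduces the grading of $\tot \OmegaAwedgeB$, and the symmetric multiplication coincides with the wedge product.

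Third, I would extract the formulas (1)--(4) by Taylor-expanding the transported $\dL$ in the fiber coordinates of $L[1]$. The derived Poisson brackets on $\sections{\hat{S}(\cL[1])}$ are determined, via the weight constraint and the graded Leibniz rule, by their restrictions to the generating subspaces $\cinf{A[1]}$ and $\sections{\cL[1]}$ through Equations \eqref{eq: Opera}--\eqref{eq: Louver}; each formula then reduces to unpacking $\dL \xi$ on a linear one-form $\xi = \alpha + \beta \in \sections{A^\vee \oplus B^\vee}$ via the splitting, which directly yields (2a)--(2d) and (3a)--(3b). Because $\dL$ is at most quadratic in the linear forms on $L[1]$ (reflecting that $L$ is an ordinary Lie algebroid), the $L_\infty$ algebroid $\cL$ has $\lambda_l = 0$ for $l \geq 3$ on sections and multi-anchors $\rho_l = 0$ for $l \geq 3$ on $\Lambda^l \cL$; consequently all derived Poisson brackets of arity $\geq 4$ vanish, giving (4). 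The main obstacle is the bookkeeping of Koszul signs and degree-shift conventions in this final identification; no further conceptual input beyond Propositions \ref{Prop: main1} and \ref{Thm: LinfinitytotshiftedPoisson} is required.
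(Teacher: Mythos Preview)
Your proposal is correct and follows essentially the same route as the paper: apply Proposition~\ref{Prop: main1} to obtain the $L_\infty$ algebroid on $\cL=A[1]\oplus B\to A[1]$, invoke Proposition~\ref{Thm: LinfinitytotshiftedPoisson} with $k=+1$ to produce the degree $(+1)$ derived Poisson algebra on $\sections{\hat{S}(\cL[1])}\cong\tot\OmegaAwedgeB$, and then read off the brackets on generators. The paper carries out your third step more explicitly by first decomposing $\dL=\dAB+\dBA+\dbeta$ according to weight in $B^\vee$ (weights $0$, $1$, $2$ respectively) and then extracting the anchors $\rho_0,\rho_1,\rho_2$ and brackets $l_1,l_2,l_3$ via Voronov's derived-bracket formulas \eqref{Eqt: Qtoanchors}--\eqref{Eqt: Qtobrackets}; this is exactly the ``Taylor expansion in fiber coordinates'' you describe. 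One small point: your justification of (4) from ``$\dL$ is at most quadratic in the linear forms on $L[1]$'' is slightly compressed---what is actually needed is that the $B^\vee$-weight components $D_l$ of $Q$ vanish for $l\geq 3$ (giving $\rho_l=0$ for $l\geq3$ and $\lambda_l=0$ for $l\geq4$), together with the observation that $D_2$ acts trivially on $\sections{B^\vee}$ (giving $\lambda_3=0$ on $\sections{B}$). The paper's decomposition $\dL=\dAB+\dBA+\dbeta$ makes this transparent.
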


To prove Proposition \ref{Prop: main1continue}, we need to give an explicit expression for the homological vector field $Q$ on $\cL[1]$.
Choosing a splitting of \eqref{Eqt: exactseq}, we have an identification $\LADL \cong \LADA\oplus \moduleB$.
Besides the Bott $\LADA$-connection $\nabla$ on $\moduleB$, we also have a $\moduleB$-``connection'' on $\LADA$:
\begin{equation}
\label{eq: ba}
\sections{\moduleB}\otimes\sections{\LADA}\to\sections{\LADA} : (u,a)\mapsto\Delta_u a=\pA\baL{u}{a} .
\end{equation}

Introduce the following maps:
\begin{eqnarray*}
\anchorB: &&\moduleB\to T_\baseM, \quad \anchorB= \rho_L|_{B},\\
\baB{\argument}{\argument}: &&
\sections{\moduleB}\otimes\sections{
\moduleB}\to\sections{\moduleB }, \quad \baB{u_1}{u_2}=\quotientmapLB\baL{ {u_1}}{ {u_2}},
\\
\baBtoA{\argument}{\argument}: && \sections{\moduleB}\otimes\sections{
\moduleB}\to\sections{\LADA},\quad
\baBtoA{u_1}{u_2}=\pA\baL{u_1}{u_2},
\end{eqnarray*}
$\forall u_1,u_2\in\sections{\moduleB}$.
The Lie algebroid structure on $\LADL$ can be described as follows:
\begin{equation}\label{Eqt: allstructuremaps}
\begin{cases} \baL{ a_1}{ a_2}
= \baA{a_1}{a_2}; \\
\baL{ {u_1}}{ {u_2}}
= \baBtoA{ u_1}{ u_2} + \baB{ u_1}{ u_2}; \\
\baL{ a }{ {u }}
=- \baBAtoA{ u }{a }+ {\baABtoB{a }{ u }} ; \\
\anchorL( a+ {u})=\anchorA(a)+\anchorB( u),
\end{cases}
\end{equation}
$\forall a,a_1,a_2\in\sections{\LADA}$, $ u, u_1, u_2\in\sections{\moduleB}$.

Let $\OmegaAwedgeBs{p}{q}=\sections{\Lambda^p\LADAs\otimes\Lambda^q\moduleBs}$.
Then
\begin{equation}
\label{eq: Saint-Jean}
\cinf{\cL[1]}=\sections{\Lambda^\bullet \LADLs[-1]}
\cong \oplus_{p\geq 0,q\geq 0}\OmegaAwedgeBs{p}{q} [-p-q].
\end{equation}
Since $\wedge^\bullet B$ is an $\LADA$-module, we have the standard Chevalley--Eilenberg differential
\[ \dAB : \OmegaAwedgeBs{p}{q}\to\OmegaAwedgeBs{p+1}{q} .\]
In a similar fashion, we can define a map
\[ \dBA : \OmegaAwedgeBs{p}{q}\to\OmegaAwedgeBs{p}{q+1} .\]
Note that, however, $\dBA$ may not square to zero.
There is also a degree $(+1)$ derivation:
\[ \dbeta : \OmegaAwedgeBs{p}{q}\to\OmegaAwedgeBs{p-1}{q+2} ,\]
generated by the following relations:
\begin{align*}
&\quad\dbeta f = 0, \quad\quad \forall f\in \cinf{M}; \\
&\pairing{\dbeta(\omega)}{u_1\wedge u_2} = -\pairing{\omega}{\baBtoA{u_1}{u_2}},\quad \forall \omega\in\sections{\LADAs},u_1,u_2\in\sections{\moduleB}.
\end{align*}

The following lemma is immediate.

\begin{lemma}
Under the isomorphism \eqref{eq: Saint-Jean},
the homological vector field $Q $ on $\cL[1]$ is given by
\begin{equation}\label{Eqt: dL3parts}
\dL=\dAB+\dBA+\dbeta.
\end{equation}
\end{lemma}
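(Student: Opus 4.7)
The plan is to start from the Koszul formula for the Chevalley--Eilenberg differential $\dL$ on $\sections{\Lambda^\bullet \LADLs}$, namely
\[
(\dL\alpha)(x_0,\dots,x_{p+q}) = \sum_i (-1)^i \anchorL(x_i)\,\alpha(\dots,\widehat{x_i},\dots) + \sum_{i<j}(-1)^{i+j}\alpha(\baL{x_i}{x_j},\dots,\widehat{x_i},\dots,\widehat{x_j},\dots),
\]
and to decompose both sides bihomogeneously with respect to the splitting $\LADL\cong \LADA\oplus\moduleB$. Each argument $x_i$ is written as $a_i+u_i$, the anchor decomposes as $\anchorL=\anchorA\oplus\anchorB$, and the bracket $\baL{\argument}{\argument}$ decomposes into the five pieces listed in \eqref{Eqt: allstructuremaps}. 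For a bihomogeneous $\alpha\in \OmegaAwedgeBs{p}{q}$, only those terms in the Koszul sum survive in which, after substituting one bracket, exactly $p$ of the remaining arguments lie in $\LADA$ and $q$ lie in $\moduleB$.

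Next, I would sort all surviving terms by the bidegree shift they produce. The Koszul formula combined with \eqref{Eqt: allstructuremaps} produces only three possible shifts from $(p,q)$:
\begin{itemize}
\item Shift $(+1,0)$: the $\anchorA(a_i)$-anchor terms, the $\baA{a_i}{a_j}$-bracket terms, together with the $B$-component $\baABtoB{a}{u}$ of the mixed bracket $\baL{a}{u}$ (which acts on the $\Lambda^\bullet\moduleB$-factor as the natural extension of the Bott connection). By definition these assemble precisely into the Chevalley--Eilenberg differential of $\LADA$ with coefficients in the $\LADA$-module $\Lambda^\bullet\moduleB$, namely $\dAB$.
\item Shift $(0,+1)$: the $\anchorB(u_i)$-anchor terms, the $B$-component $\baB{u_i}{u_j}$ of the $B$--$B$ bracket, and the $A$-component $-\baBAtoA{u}{a}$ of the mixed bracket $\baL{a}{u}$, which together assemble into $\dBA$ as defined via $\Delta$.
\item Shift $(-1,+2)$: the $A$-component $\baBtoA{u_i}{u_j}$ of the $B$--$B$ bracket, contracted against the single $A^\vee$-factor occurring in $\alpha$; this is exactly $\dbeta$ on generators, hence on all of $\OmegaAwedgeBs{p}{q}$ by the Leibniz rule.
\end{itemize}
No other bidegree shift is possible, because each bracket term lowers the total number of input slots by $1$ while each anchor term lowers the number of $\alpha$-slots by $1$; the four possible assignments of $(A,A)$, $(A,B)$, $(B,A)$, $(B,B)$ types to the bracketed pair, together with the $A$/$B$ type of a single anchored argument, exhaust the possibilities above.

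The cleanest way to close the argument is to check the identity on generators of the bigraded algebra $\sections{\Lambda^\bullet\LADAs\otimes\Lambda^\bullet\moduleBs}$, namely on $f\in\cinf{M}$, $\omega\in\sections{\LADAs}$, and $\xi\in\sections{\moduleBs}$, and then extend by the derivation property (all four operators $\dL$, $\dAB$, $\dBA$, $\dbeta$ being derivations of degree $(+1)$ of the wedge product). On $f$ one recovers $\dL f=\dAB f+\dBA f$ with $\dbeta f=0$; on $\omega$ the three components produce respectively the $\LADA$-Chevalley differential of $\omega$, the term $\alpha\mapsto -\omega(\Delta_{\argument}\argument)$, and the term $\alpha\mapsto -\omega(\baBtoA{\argument}{\argument})$; on $\xi$ one gets the $\LADA$-differential coming from the Bott connection and the $B$-bracket contribution. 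The main (and only real) obstacle is the sign bookkeeping, but since the three target operators have been defined precisely so as to match the three bidegree components of the Koszul formula, the verification on generators is mechanical and extends uniquely by the derivation property, giving \eqref{Eqt: dL3parts}.
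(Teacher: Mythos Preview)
Your proposal is correct and follows exactly the natural line of reasoning: decompose the Koszul formula for $\dL$ according to the splitting $\LADL\cong\LADA\oplus\moduleB$ and the bracket relations \eqref{Eqt: allstructuremaps}, sort terms by bidegree shift, and verify on generators. The paper itself declares this lemma ``immediate'' and gives no proof, so your argument is simply a spelled-out version of the verification the authors leave to the reader.
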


Consider the vector bundle of
$\ZZ$-graded manifolds $\cL=\LADA[1]\oplus\moduleB\to\LADA[1]$.
Its space of sections
$\sections{\cL}$ is isomorphic to $\OmegaAB$.
The space of vector fields on $\LADA[1]$ is $\Der(\OmegaAsingle{\bullet})$.

\begin{lemma}
The $L_\infty$ algebroid structure on $\cL\to \LADA[1]$, where
$\cL=\LADA[1]\oplus\moduleB$,
as in Proposition \ref{Prop: main1} is determined by the following relations:
\begin{enumerate}
\item The $0$th anchor $\rho_0=\dA: $ $\OmegaAsingle{\bullet}\rightarrow \OmegaAsingle{\bullet+1}$ is the Chevalley--Eilenberg differential of the
Lie algebroid $A$.
\item The unary anchor $\rho_1: \cL\to T_{\LADA[1]}$ is determined by \[ \rho_1(u)=\Delta_u,\quad \forall u\in \sections{B} .\]
\item The binary anchor $\rho_2: \wedge^2\cL\to T_{\LADA[1]}$ is determined by \[ \rho_2(u_1,u_2)=i_{\beta(u_1,u_2)},\qquad \forall u_1,u_2\in \sections{B} .\]
\item The unary bracket $l_1$ coincides with
the Chevalley--Eilenberg differential $\dAB: \OmegaAsingle{\bullet}(B)\to\OmegaAsingle{\bullet+1}(B)$.
\item The binary bracket $l_2: \wedge^2\sections{\cL}\to \sections{\cL}$ is determined by \[ l_2(u_1,u_2)=[u_1,u_2]_B,\qquad \forall u_1,u_2\in \sections{B} .\]
\item The ternary bracket $l_3: \wedge^3\sections{\cL}\to \sections{\cL}$ is determined by \[ l_3(u_1,u_2,u_3)=0,\qquad \forall u_1,u_2,u_3\in \sections{B} .\]
\item All higher brackets $l_i$ ($i\geq 4$), and anchors $\rho_{j}$ ($j\geq 3$) vanish.
\end{enumerate}
\end{lemma}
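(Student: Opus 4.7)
The plan is to unpack the homological vector field $Q=\dL$ on $\cL[1]\cong L[1]$, which by Proposition~\ref{Prop: LinfinityalgebroidandQ} encodes the $L_\infty$ algebroid structure on $\cL=\LADA[1]\oplus\moduleB\to \LADA[1]$. The preceding lemma gives the decomposition
\[
\dL=\dAB+\dBA+\dbeta,
\]
whose summands have bidegrees $(+1,0)$, $(0,+1)$, and $(-1,+2)$ with respect to the natural bigrading of $\cinf{\cL[1]}=\Omega^\bullet_{\LADA}(\Lambda^\bullet \moduleBs)$ by $\Lambda \LADAs$- and $\Lambda \moduleBs$-degrees. Since the second of these coincides with the fiber weight of $\cL[1]\to \LADA[1]$, the standard dg-manifold dictionary for $L_\infty$ algebroids identifies a weight-$w$ component of $Q$ with the pair $(\rho_w, l_{w+1})$: its $\partial_{\xi}$-part (i.e., the piece tangent to the base $\LADA[1]$) encodes the $w$-th multi-anchor $\rho_w : \Lambda^w\cL\to T_{\LADA[1]}$, and its $\partial_{\eta}$-part (i.e., the vertical piece) encodes the $(w+1)$-th multi-bracket $l_{w+1}$ on $\sections{\cL}=\OmegaAB$.

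Applying this dictionary summand by summand, I would proceed as follows. For $w=0$, $\dAB$ restricted to $\OmegaAsingle{\bullet}\subset\cinf{\cL[1]}$ is the Chevalley--Eilenberg differential of $\LADA$, so $\rho_0=\dA$; dualizing its vertical part, which acts on $\sections{\moduleBs}$ through the dual Bott connection, gives $l_1=\dAB$ on $\OmegaAB$. For $w=1$, $\dBA$ is built from the $\moduleB$-``connection'' $\anadelta$ on $\LADA$ from Equation~\eqref{eq: ba}; its base-tangent part produces $\rho_1(u)=\anadelta_u\in\Der(\OmegaAsingle{\bullet})$ for $u\in\sections{\moduleB}$, while its vertical part yields $l_2(u_1,u_2)=\baB{u_1}{u_2}=\quotientmapLB\baL{u_1}{u_2}$. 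For $w=2$, $\dbeta$ is by construction an $\OmegaAsingle{\bullet}$-linear contraction against $\baBtoA{\argument}{\argument}$, entirely tangent to the base; hence $\rho_2(u_1,u_2)=\contraction{\baBtoA{u_1}{u_2}}$ and $l_3=0$. Since $\dL$ has no components of $\moduleBs$-weight $\geq 3$, all remaining $\rho_l$ ($l\geq 3$) and $l_m$ ($m\geq 4$) vanish automatically.

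The main obstacle is essentially bookkeeping: matching the dual description on $\cinf{\cL[1]}$, where $Q$ acts as a derivation, with the primal formulation on $\sections{\cL}$, where the $l_m$ and $\rho_l$ live, requires careful tracking of signs and degree conventions. Once each summand of $\dL$ is evaluated on the generating functions $f\in\cinf{M}$, $\omega\in\sections{\LADAs}$, and $\xi\in\sections{\moduleBs}$, the claimed identifications follow directly from the structure equations~\eqref{Eqt: allstructuremaps} of $L\cong \LADA\oplus \moduleB$ together with the definitions of $\dA$, $\anadelta$, $\baBtoA{\argument}{\argument}$, $\baB{\argument}{\argument}$, and $\nabla^{\Bott}$.
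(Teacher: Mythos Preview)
Your proposal is correct and follows essentially the same route as the paper: both decompose $Q=\dL=\dAB+\dBA+\dbeta$ by fiber weight $0,1,2$ and read off the pair $(\rho_w,l_{w+1})$ from the weight-$w$ summand. The only difference is presentational: the paper spells out the Voronov data $(\mathfrak{A},\mathfrak{a},P,Q)$ and computes each anchor and bracket explicitly via the derived-bracket formulas \eqref{Eqt: Qtoanchors}--\eqref{Eqt: Qtobrackets} (e.g.\ $\rho_1(u)(\omega)=P_2[[\dL,\iota_u],\omega]=\iota_u\dBA\omega=\Delta_u\omega$), whereas you invoke the weight dictionary from Appendix~A more abstractly; both extract the same structure maps from the same decomposition.
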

\begin{proof}
We follow the construction as in the proof of Proposition~\ref{Prop: LinfinityalgebroidandQ}.
The relevant Voronov data are as follows: 
\begin{enumerate}
\item The graded Lie algebra of first order differential operators on
$\cL[1]$ is
\[ \mathfrak{A}=\mathscr{D}^{\leq 1}(\cL[1])\cong\Der(\OmegaAwedgeBs{\bullet}{\bullet})\oplus\OmegaAwedgeBs{\bullet}{\bullet} .\] 
Here elements in $\OmegaAwedgeBs{\bullet}{\bullet}=\cinf{\cL[1]}$ 
are considered as zeroth order differentials, i.e.
the multiplication by functions.
\item
The abelian Lie subalgebra $\mathfrak{a}=\sections{\cL[1]}\oplus\cinf{A[1]}\cong\OmegaAB\oplus\OmegaAsingle{\bullet}$.

The inclusion $ \mathfrak{a}\hookrightarrow\mathfrak{A}$ is given as
follows: $\forall$ $a\in\OmegaAsingle{r}(B)$, $\contraction{a}$ is the contraction operator
\[ \contraction{a} : \OmegaAwedgeBs{p}{q}\to{\OmegaAwedgeBs{p+r}{q-1}} .\]
Here $\OmegaAsingle{\bullet}$ is considered as a
subalgebra in $\OmegaAwedgeBs{\bullet}{\bullet}$ naturally.

\item The projection map $P : \mathfrak{A}\to\mathfrak{a}$ is given by
\[ P(v+\mu)=P_1(v)+P_2(\mu),\qquad \forall v\in\Der(\OmegaAwedgeBs{\bullet}{\bullet}), \mu\in \OmegaAwedgeBs{\bullet}{\bullet} ,\]
where 	 $P_2: \OmegaAwedgeBs{\bullet}{\bullet}\to\OmegaAsingle{\bullet}$
is the natural projection, and
$P_1: \Der(\OmegaAwedgeBs{\bullet}{\bullet})\to\OmegaAB$
is defined as follows: $\forall v\in\Der(\OmegaAwedgeBs{\bullet}{\bullet})$,
$P_1(v)\in\OmegaAB$ is determined by the composition
\[ \sections{B^\vee}\xto{v}\OmegaAwedgeBs{\bullet}{\bullet}\xto{\pr}\OmegaAsingle{\bullet} .\]
\item The homological vector field $Q\in\mathfrak{A}$ on $\cL[1]$
is $\dL=\dAB+\dBA+\dbeta$ as in Equation \eqref{Eqt: dL3parts}.
\end{enumerate}
We now apply Equations \eqref{Eqt: Qtobrackets} and \eqref{Eqt: Qtoanchors}
to obtain structure maps of the $L_\infty$ algebroid structure on $\cL\to
A[1]$.
For this purpose, observe that,
in the decomposition $\dL=\dAB+\dBA+\dbeta$, the operator
$\dAB$ is of weight $0$, $\dBA$ is of weight $1$, and $\dbeta$ is of weight $2$.

Applying Equation \eqref{Eqt: Qtoanchors}, the $0$th anchor is given by
\[ \rho_0(\omega)=P_2[\dL,\omega]=[\dAB,\omega]=\dA(\omega),\quad\forall\omega\in\OmegaAsingle{\bullet} .\]
This proves (1). The unary anchor can be obtained by the same method:
\begin{eqnarray*}
\rho_1(u)(\omega) &=&P_2[[\dL,\contraction{u}],\omega]= [[\dBA,\contraction{u}],\omega]\\
&=& (\dBA\circ \contraction{u}+\contraction{u}\circ \dBA)\omega\\
&=& \contraction{u}\dBA\omega=\Delta_u \omega, \ \ \ \forall\omega\in\OmegaAsingle{\bullet} .
\end{eqnarray*}
This proves (2). One can prove (3) similarly.

Next we describe the multi-brackets. For the unary bracket, we
apply Equation \eqref{Eqt: Qtobrackets}.
For any $u\in\sections{B}$, note that $l_1(u)\in\Omega_A^1(B)$.
For any $\eta\in \OmegaAwedgeBs{0}{1}=\sections{B^\vee}$, we have
\begin{eqnarray*}
\contraction{l_1(u)}(\eta)&=&(P_1[\dL,\contraction{u}])(\eta)
=[\dAB,\contraction{u}](\eta ) \\
&=& (\dAB\circ \contraction{u}+\contraction{u}\circ \dAB)(\eta )\\
&=& \contraction{\dAB u}(\eta).
\end{eqnarray*}
This proves (4).

For the binary bracket, let $u_1,u_2\in\sections{B}$,
$\eta\in\sections{B^\vee}$. Then,
\begin{eqnarray*}
\contraction{l_2(u_1,u_2)}(\eta) &=& (P_1[[\dL,\contraction{u_1}],\contraction{u_2}])(\eta)
= [[\dBA,\contraction{u_1}],\contraction{u_2}](\eta )\\
&=& (\dBA\circ \contraction{u_1}\circ \contraction{u_2}+
\contraction{u_1}\circ\dBA\circ \contraction{u_2}-\contraction{u_2}\circ\dBA\circ\contraction{u_1}-\contraction{u_2}\circ\contraction{u_1}
\circ\dBA)(\eta )\\
&=&\contraction{u_1}\dBA\pairing{u_2}{\eta}-\contraction{u_2}\dBA\pairing{u_1}{\eta}-
\contraction{u_1}\contraction{u_2}\dBA {(\eta)}\\
&=&\pairing{\eta}{[u_1,u_2]_B}=\contraction{{[u_1,u_2]}_B}(\eta).
\end{eqnarray*}
This proves (5). The rest of claims (6) and (7) can be proved in a similar
fashion.
\end{proof}

\begin{proof}[Proof of Proposition \ref{Prop: main1continue}]
According to Theorem \ref{Thm: LinfinitytotshiftedPoisson},
the $L_\infty$ algebroid $\cL\to A[1]$
induces a degree $(+1)$ derived Poisson algebra structure
on the space of sections
$\sections{\hat{\Sbullet}(\cL [1])} \cong \tot \OmegaAwedgeB$, whose
multi-brackets are obtained from
the structure maps on $\cL$ by applying the Leibniz rule.
The rest follows from a straightforward verification.
\end{proof}

\subsection{Second construction: Fedosov dg Lie algebroid arising from a Lie pair}
\label{pineapple}

Let us first recall Fedosov dg Lie algebroids as constructed
in \cite{BSX:17}.
We will use the same settings as in Section \ref{Sec: resultofoutaliepair}: a Lie pair $(L,A)$ and $B=L/A$.
Consider the graded vector bundle $\Fedosov\to L[1]$, where
$\Fedosov=L[1]\oplus B$.
It is clear that $\cinf{\Fedosov}\cong 
\sections{ \Lambda^{\bullet} L^\vee\otimes \hat{S} B^\vee}$.

Given a splitting of the short exact sequence \eqref{Eqt: exactseq}, the following
maps are established in \cite{BSX:17,arXiv:1605.09656}
\begin{itemize}
\item $\delta : \Gamma (\Lambda^{\bullet} L^\vee\otimes \hat{S} {B^\vee})
\to\Gamma (\Lambda^{\bullet+1}L^\vee\otimes \hat{S} {B^\vee})$, a degree $(+1)$ derivation;
\item $\sigma : \Gamma ( \Lambda^{\bullet} L^\vee\otimes \hat{S} {B^\vee})
\to \Gamma (\Lambda^{\bullet} A^\vee)$, the projection;
\item $\tau : \Gamma ( \Lambda^{\bullet} A^\vee) \to \Gamma (\Lambda^{\bullet} L^\vee\otimes \hat{S} {B^\vee})$, the inclusion;
\item $h : \Gamma (\Lambda^{\bullet} L^\vee\otimes \hat{S} {B^\vee}) \to
\Gamma (\Lambda^{\bullet-1}L^\vee\otimes \hat{S} {B^\vee})$, the homotopy map;
\item $ \sigmanatural=\sigma\otimes 1 : \Gamma ( \Lambda^{\bullet} L^\vee\otimes \hat{S} {B^\vee}\otimes \Lambda^\bullet B) \to \Gamma ( \Lambda^{\bullet} A^\vee \otimes\Lambda^\bullet B)$,
the projection;
\item $\taunatural=\tau\otimes 1 :
\Gamma (\Lambda^{\bullet} A^\vee \otimes\Lambda^\bullet B)
\to \Gamma ( \Lambda^{\bullet} L^\vee\otimes \hat{S} {B^\vee} \otimes\Lambda^\bullet B )$,
the inclusion;
\item $\etendu{h}=h\otimes 1: \Gamma (\Lambda^{\bullet} L^\vee\otimes \hat{S} {B^\vee} \otimes\Lambda^\bullet B)\to \Gamma (
{\Lambda^{\bullet-1}L^\vee\otimes \hat{S}{B^\vee}\otimes\Lambda^\bullet B})$,
the homotopy map.
\end{itemize}

We recall a result in \cite{arXiv:1605.09656}:

\begin{proposition}[Theorem~2.5 in \cite{arXiv:1605.09656}]
\label{SXfedosovpapermainresult1}
Let $(L,A)$ be a Lie pair.
Given a splitting of the short exact sequence \eqref{Eqt: exactseq}
and a torsion-free $L$-connection $\nabla$ on $B$ extending the Bott $A$-connection,
there exists a \emph{unique} 1-form valued in formal vertical vector fields of $B$:
\[ X^\nabla\in\sections{L\dual\otimes\hat{S}^{\geqslant 2} B\dual \otimes B} \]
satisfying $\etendu{h}(X^\nabla)=0$ and such that the derivation
$Q : \sections{\Lambda^\bullet L^\vee\otimes\hat{S} B^\vee} \to \sections{\Lambda^{\bullet+1}L^\vee\otimes\hat{S}B^\vee}$
defined by
\begin{equation}\label{eq: Q}
Q=-\delta+d_L^\nabla+X^\nabla
\end{equation}
satisfies $Q^2=0$.
Here
\begin{enumerate}
\item $d_L^\nabla : \sections{\Lambda^{\bullet}L^\vee\otimes\hat{S}{B^\vee}}\to\sections{\Lambda^{\bullet+1}L^\vee\otimes\hat{S}{B^\vee}}$
is the covariant derivative associated with the $L$-connection $\nabla$ on $B$;
\item $X^\nabla$ acts on the algebra $\sections{\Lambda^\bullet L^\vee\otimes\hat{S}B^\vee}$
as a derivation in a natural fashion.
\end{enumerate}
\end{proposition}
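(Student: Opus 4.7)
The plan is a Fedosov-style recursion, decomposing $X^\nabla$ by weight in the formal symmetric algebra $\hat{S}B^\vee$. Write
\[ X^\nabla \;=\; \sum_{k\geq 2} X_k^\nabla, \qquad X_k^\nabla \in \sections{L^\vee\otimes S^k B^\vee\otimes B}, \]
so that the gauge condition $\etendu{h}(X^\nabla)=0$ decouples into $\etendu{h}(X_k^\nabla)=0$ for each $k\geq 2$. As a graded derivation of $\sections{\Lambda^\bullet L^\vee\otimes\hat{S}B^\vee}$, $X_k^\nabla$ raises the exterior $L^\vee$-degree by one and the symmetric $B^\vee$-degree by $k-1\geq 1$; by contrast $-\delta$ lowers the symmetric degree by one and $d_L^\nabla$ preserves it.

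Expand $Q\circ Q = 0$ and sort the result by symmetric-degree shift. The lowest contribution $\delta^2 = 0$ is a direct Koszul check. For each successive order $k\geq 0$ one obtains an equation of the form
\[ \delta\bigl(X_{k+2}^\nabla\bigr) \;=\; R_k, \]
where $R_k \in \sections{\Lambda^2 L^\vee\otimes S^{k+1} B^\vee\otimes B}$ is a universal polynomial expression in the curvature of $\nabla$, the Lie bracket on $L$, the chosen splitting, the torsion-free hypothesis, and the previously constructed components $X_2^\nabla,\ldots,X_{k+1}^\nabla$. Here I am implicitly using that the graded commutator $[\delta,\widehat{X_k^\nabla}]_+$ of operators identifies with $\delta$ applied to $X_k^\nabla$ regarded as an element of $\sections{L^\vee\otimes S^k B^\vee\otimes B}$ via the extended action of $\delta$.

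The mechanism for inverting $\delta$ is the Hodge-type identity carried by the extended contraction $(\sigmanatural,\taunatural,\etendu{h})$,
\[ \etendu{h}\,\delta + \delta\,\etendu{h} \;=\; \id - \taunatural\,\sigmanatural, \qquad \etendu{h}^2 = 0, \qquad \sigmanatural\,\etendu{h} = 0, \qquad \etendu{h}\,\taunatural = 0. \]
Defining $X_{k+2}^\nabla := \etendu{h}(R_k)$ produces a solution that automatically satisfies $\etendu{h}(X_{k+2}^\nabla)=\etendu{h}^2(R_k)=0$; uniqueness within that gauge follows from the same identity, since a hypothetical difference would be $\delta$-closed and simultaneously in the kernel of $\etendu{h}$ and of $\sigmanatural$ (the latter because it lives in strictly positive symmetric weight, hence outside $\operatorname{im}\taunatural$), and therefore must vanish. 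The principal obstacle is the consistency of the recursion: at each step one must verify that $R_k$ really lies in $\operatorname{im}\delta$, i.e.\ that $\sigmanatural(R_k)=0$ and $\delta(R_k)=0$. This is a Bianchi-type identity, extracted inductively from the vanishing of the appropriate weight component of $[Q_{\leq k},\,(Q_{\leq k})^2]=0$ applied to the partial approximation $Q_{\leq k}:=-\delta+d_L^\nabla+\sum_{j=2}^{k+1}X_j^\nabla$; once this consistency is in hand, the recursion produces the unique $X^\nabla$ asserted in the statement.
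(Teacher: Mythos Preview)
The paper does not actually prove this proposition; it is quoted verbatim from \cite{arXiv:1605.09656} and simply recalled here as background for the Fedosov dg Lie algebroid construction. Your sketch is the standard Fedosov iteration (filter by symmetric weight, solve $\delta X_{k+2}^\nabla = R_k$ using the homotopy $\etendu{h}$, and verify solvability via the Bianchi-type identity $\delta R_k=0$ coming from the graded Jacobi identity for the partial operator), and this is precisely the method used in the cited reference. So you are reproducing the original argument rather than diverging from it.

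Two small points worth tightening. First, the Bianchi step is the only place with real content: the identity $[Q_{\leq k},(Q_{\leq k})^2]=0$ is automatic, but you should say explicitly which weight component you read off and why that component is exactly $[\delta,R_k]=\delta R_k$ (this uses that $R_k$ commutes with $\delta$ in the right graded sense, since $R_k$ is a $2$-form valued in vertical vector fields). Second, the claim $\sigmanatural(R_k)=0$ needs a word: $R_k$ lives in strictly positive symmetric weight because every contributing term involves at least one curvature or $X_j^\nabla$ factor, each of which carries symmetric weight $\geq 1$; this is where the hypothesis $X^\nabla\in\sections{L^\vee\otimes\hat{S}^{\geq 2}B^\vee\otimes B}$ and the torsion-freeness of $\nabla$ enter to ensure the weight-zero piece of $(d_L^\nabla)^2$ already vanishes.
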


As a consequence, $(\Fedosov=L[1]\oplus B, Q)$ is a dg manifold, called the Fedosov dg manifold \cite{arXiv:1605.09656}. Consider the surjective submersion $\cM\to M$.
Let $\cF\to\cM$ denote the pullback of the vector bundle $B\to M$ through $\cM\to M$.
It is a graded vector bundle whose total space $\cF$ is the graded manifold with support $M$
associated with the graded vector bundle $L[1]\oplus B\oplus B\to M$.
Its space of sections $\sections{\cF\to\cM}$ is canonically identified with
$C^\infty(\cM)\otimes_{C^\infty(M)}\sections{B}=\sections{\Lambda^\bullet L\dual\otimes\hat{S}(B\dual)\otimes B}$.
It is naturally a vector subbundle of $T_{\cM}\to\cM$; the inclusion $\sections{\cF\to\cM}\into\XXa(\cM)$
takes the section $(\lambda\otimes\chi^J)\otimes\partial_k\in C^\infty(\cM)\otimes_{C^\infty(M)}\sections{B}$ of the vector bundle $\cF\to\cM$
to the derivation \[ \mu\otimes\chi^M\mapsto \sum_i \lambda\wedge\mu\otimes M_i\chi^{J+M-e_i} \] of $C^{\infty}(\cM)$. Here $M=(\cdots M_i \cdots)$ denotes a multi-index, and $e_i$ denotes the multi-index all of whose components are $0$ except for the $i$th which is equal to $1$.
It is simple to see that $\cF\subset T_\cM$ is a dg foliation of the dg manifold $(\cM, Q)$, which is called
the Fedosov dg Lie algebroid \cite{BSX:17}. 
Hence $(\Gamma (\cM; \wedge^\bullet \cF), \LieDer_Q)$ is 
a $(+1)$-shifted derived Poisson algebra (or a differential
Gerstenhaber algebra), where $\Gamma (\cM; \wedge^\bullet \cF)
=\oplus_k \Gamma (\cM; \wedge^k \cF)[k]$, and $ \LieDer_Q=[Q,\argument]$ denotes
the Lie derivative. Since $\Gamma (\cM; \Lambda^\bullet \cF)\cong
\sections{\Lambda^{\bullet}L^\vee\otimes\hat{S}B^\vee\otimes\Lambda^{\bullet}B}
=\oplus_k \sections{\Lambda^{\bullet}L^\vee\otimes\hat{S}B^\vee\otimes\Lambda^{k}B}[k]$,
it thus follows that
\[ (\sections{\Lambda^{\bullet}L^\vee\otimes\hat{S}B^\vee\otimes\Lambda^{\bullet}B},\LieDer_Q) \]
is a $(+1)$-shifted derived Poisson algebra (or a differential Gerstenhaber algebra).

We need another result:
\begin{proposition}[\cite{BSX:17}]
\label{SXfedosovpapermainresult2}
Under the same hypothesis as in Proposition \ref{SXfedosovpapermainresult1}, there is an induced contraction datum
\begin{equation}
\label{Eqt: sigmatausmailnaturalquasi}
\begin{tikzcd}
(\tot \Gamma(\Lambda^\bullet L^\vee\otimes\hat{S} B^\vee \otimes\Lambda^\bullet B),\LieDer_Q)
\arrow[loop left, "\smilehnatural"] \arrow[r, shift left, "\sigmanatural"]
& (\tot \OmegaAwedgeB,\dAB). \arrow[l, shift left, "\smiletaunatural"]
\end{tikzcd}
\end{equation}
The maps $\smilehnatural$ and $\smiletaunatural$ are defined by (see \cite{BSX:17}):
\begin{gather*}
\smilehnatural=\hnatural+\sum_{i=1}^\infty(\etendu{h}\circ \LieDer_\varrho)^i \hnatural ,\\
\smiletaunatural =\taunatural+\sum_{i=1}^\infty(\etendu{h}\circ \LieDer_\varrho)^i \taunatural
.\end{gather*}
\end{proposition}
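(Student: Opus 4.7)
The plan is to derive the stated contraction from Lemma~\ref{lem: PL} (the Perturbation Lemma), using $\LieDer_{-\delta}$ as the unperturbed differential and $\LieDer_{\varrho}:=\LieDer_{d_L^\nabla+X^\nabla}$ as the perturbation.

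First, I would recall from \cite{arXiv:1605.09656, BSX:17} that the maps $\sigma,\tau,h$ assemble into a contraction of $(\sections{\Lambda^\bullet L^\vee\otimes\hat{S}B^\vee},-\delta)$ onto $(\sections{\Lambda^\bullet A^\vee},0)$, constructed out of the fibrewise Koszul homotopy along $B$. Tensoring with the trivial (identity) contraction on $\sections{\Lambda^\bullet B}$ yields
\[
\begin{tikzcd}
(\tot\sections{\Lambda^\bullet L^\vee\otimes\hat{S}B^\vee\otimes\Lambda^\bullet B},\LieDer_{-\delta}) \arrow[loop left,"\hnatural"] \arrow[r,shift left,"\sigmanatural"] & (\tot\OmegaAwedgeB,0) \arrow[l,shift left,"\taunatural"]
\end{tikzcd}
\]
whose structure maps are precisely the operators $\sigmanatural,\taunatural,\hnatural$ listed above.

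Second, by Proposition~\ref{SXfedosovpapermainresult1} the Fedosov differential decomposes as $Q=-\delta+\varrho$, so $\LieDer_{\varrho}=\LieDer_Q-\LieDer_{-\delta}$ is a degree $(+1)$ operator with $(\LieDer_{-\delta}+\LieDer_{\varrho})^2=\LieDer_Q^2=0$, i.e.\ an admissible perturbation. Convergence of the resulting infinite series is ensured by the filtration of $\sections{\Lambda^\bullet L^\vee\otimes\hat{S}B^\vee\otimes\Lambda^\bullet B}$ by polynomial degree in $\hat{S}B^\vee$: $\etendu{h}$ strictly raises this degree, $\LieDer_{d_L^\nabla}$ preserves it, and $\LieDer_{X^\nabla}$ raises it by at least one since $X^\nabla\in\sections{L^\vee\otimes\hat{S}^{\geq 2}B^\vee\otimes B}$; hence each $(\etendu{h}\circ\LieDer_{\varrho})^i$ raises filtration degree by at least $i$, so the sums converge in the $\hat{S}B^\vee$-adic topology.

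Third, Lemma~\ref{lem: PL} then delivers a contraction of the big complex equipped with $\LieDer_Q$ onto $(\tot\OmegaAwedgeB,\delta_B)$, with homotopy $\smilehnatural$ and inclusion $\smiletaunatural$ given exactly by the stated formulas, using the identity $(\etendu{h}\LieDer_{\varrho})^{i}\etendu{h}=\etendu{h}(\LieDer_{\varrho}\etendu{h})^{i}$. The induced base perturbation reads
\[
\delta_B \;=\; \sum_{i=0}^{\infty}\sigmanatural\circ\LieDer_{\varrho}\circ(\etendu{h}\circ\LieDer_{\varrho})^{i}\circ\taunatural.
\]

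Finally, I would verify that $\delta_B=\dAB$. Any term involving $\LieDer_{X^\nabla}$ outputs an element of strictly positive polynomial degree in $\hat{S}B^\vee$ (since $X^\nabla$ is valued in $\hat{S}^{\geq 2}B^\vee\otimes B$), which is annihilated by the outermost $\sigmanatural$; hence only the $i=0$ term $\sigmanatural\circ\LieDer_{d_L^\nabla}\circ\taunatural$ survives. Because $\nabla$ restricts to the Bott $A$-connection on $B$ and $\taunatural$ embeds via the splitting as the polynomial-degree-zero part, this composition is precisely the Chevalley--Eilenberg differential $\dAB$ of the Bott $A$-connection on $\Lambda^\bullet B$. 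The main obstacle lies in this last identification: while the vanishing arguments are conceptually transparent, matching $\delta_B$ with $\dAB$ rigorously requires unwinding the explicit recursion defining $X^\nabla$ from \cite{arXiv:1605.09656} and tracking Koszul signs carefully. Every other assertion follows immediately from the Perturbation Lemma once the initial Koszul contraction is in place.
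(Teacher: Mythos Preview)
Your proposal is correct and follows essentially the same approach the paper indicates: the proposition is cited from \cite{BSX:17} without proof, and the subsequent Remark explains that the contraction $(\sigmanatural,\smiletaunatural,\smilehnatural)$ is obtained by applying the Perturbation Lemma~\ref{lem: PL} to the unperturbed contraction $(\sigmanatural,\taunatural,\hnatural)$, with $\LieDer_\varrho$ as perturbation --- precisely your strategy.

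One small remark on the final step: you say that matching $\delta_B$ with $\dAB$ ``requires unwinding the explicit recursion defining $X^\nabla$.'' In fact your own filtration argument already shows this is unnecessary. Since $\taunatural$ lands in $\hat{S}$-degree $0$, every summand containing either an $\etendu{h}$ or an $\LieDer_{X^\nabla}$ lands in $\hat{S}$-degree $\geq 1$ and is killed by $\sigmanatural$; hence $\delta_B = \sigmanatural\circ\LieDer_{d_L^\nabla}\circ\taunatural$, and this last expression involves only $d_L^\nabla$ and the splitting, not $X^\nabla$. The identification with $\dAB$ then reduces to the facts that (i) $\sigmanatural\circ d_L\circ\taunatural = d_A$ on $\Omega_A^\bullet$ (the $d_B^\Delta$ and $d_\beta$ components land in $q\geq 1$ and are projected out), and (ii) on the $\Lambda^\bullet B$ factor the Lie derivative along $d_L^\nabla$ restricts, after $\sigmanatural$, to the Bott connection because $\nabla|_A = \nabla^{\Bott}$ by hypothesis.
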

\begin{remark} We remark that the above is indeed a semifull algebra 
contraction, in the sense of Definition \ref{def: semifullcontraction}. In fact, as explained in detail in \cite[loc. cit.]{BSX:17},
the contraction $(\sigmanatural,\smiletaunatural,\smilehnatural)$ is obtained by applying the perturbation Lemma \ref{lem: PL} to a second contraction involving the maps $(\sigmanatural,\taunatural,\hnatural)$. The fact that the latter contraction is semifull can be shown by a simple direct computation, using Remark \ref{rem: semifullcontraction} and the explicit definition of the maps $\sigmanatural,\taunatural,\hnatural$ given in
\cite{BSX:17}. The fact that the contraction $(\sigmanatural,\smiletaunatural,\smilehnatural)$ is semifull follows immediately from Proposition \ref{Prop:semifullpeturbalsosemiful}.
\end{remark}

The differential Gerstenhaber algebra $(\sections{\Lambda^{\bullet} L^\vee\otimes\hat{S}B^\vee\otimes\Lambda^{\bullet} B},\LieDer_Q)$ is, by definition,
a degree $(+1)$ derived Poisson algebra, whose unary
bracket is $\LieDer_Q$, binary bracket is the Schouten bracket, and all higher brackets vanishes.
By homotopy transfer Theorem \ref{Bandiera: homotopytransfer}, there is a degree $(+1)$ derived Poisson algebra structure induced on $\tot \OmegaAwedgeB $,
whose first bracket is $\dAB$.

Below is our main result in this section.
\begin{proposition}\label{Prop: FedosovSameGalgebra}
Let $(L, A)$ be a Lie pair.
Choose a splitting of the exact sequence \eqref{Eqt: exactseq}, and a torsion free $L$-connection $\nabla$ on $B$ that extends the $A$-module structure of $B$.
Then the degree $(+1)$ derived Poisson algebra structure on $\OmegaAwedgeB$ obtained by homotopy transfer from the one on
$\sections{\Lambda^{\bullet} L^\vee\otimes\hat{S} {B^\vee}\otimes\Lambda^{\bullet} B}$,
which is induced from the Fedosov dg Lie algebroid $\cF\to \cM$, coincides with the one as in Proposition \ref{Prop: main1continue}.
\end{proposition}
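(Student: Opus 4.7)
The plan is to verify equality of the two derived Poisson structures on $\tot \OmegaAwedgeB$ by matching their $L_\infty$ brackets on a generating subspace and then invoking multilinearity together with the multiderivation property \eqref{leibniz}. Both structures share the underlying commutative dg algebra $(\tot \OmegaAwedgeB, \wedge, \dAB)$, and any degree $(+1)$ derived Poisson extension of this dg algebra is determined by the values of its brackets $\ell_n$ on tuples of elements drawn from the generating subspace $\cinf{M} \oplus \sections{\LADAs} \oplus \sections{\moduleB}$. Thus the comparison reduces to checking agreement on such tuples.

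For the Fedosov side, I would first make explicit the restrictions of the contraction maps $\sigmanatural$, $\smiletaunatural$, $\smilehnatural$ of Proposition~\ref{SXfedosovpapermainresult2} to the generating subspace. On $\cinf{M}$ and $\sections{\LADAs}$ the quasi-inclusion $\smiletaunatural$ collapses to $\taunatural$, while on $\sections{\moduleB}$ the geometric series $\smiletaunatural = \taunatural + \sum_{i \geq 1}(\etendu{h}\LieDer_\varrho)^i \taunatural$ can be expanded to whatever finite order is needed. Applying the homotopy transfer formulas from the proof of Theorem~\ref{Bandiera: homotopytransfer} to the Fedosov differential Gerstenhaber algebra (whose only nontrivial $L_\infty$ brackets are the unary one $[Q,\argument]$ and the Schouten bracket) then yields explicit expressions for the transferred $\ell_2$ and $\ell_3$. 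These should reproduce, term by term, the bracket formulas (2a)--(2d) and (3a)--(3b) of Proposition~\ref{Prop: main1continue}, with the relevant contributions arising from the combination of the leading parts of $\smiletaunatural$ and the three pieces $-\delta$, $d_L^\nabla$, $X^\nabla$ of the Fedosov differential.

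The main obstacle is to show that all transferred brackets $\ell_n$ with $n \geq 4$ vanish identically. This is not forced by degree or weight reasons alone, since $X^\nabla$ contains components of arbitrarily high weight in $\hat{S} \moduleBs$, so the recursive transfer formula a priori produces infinitely many potentially nonzero summands. I would proceed by induction on $n$, aiming to show that every iterated combination of Schouten brackets, insertions of $\smiletaunatural$, and intermediate applications of $\smilehnatural$ lies in the kernel of $\sigmanatural$ once the inputs are restricted to generators. The semifull contraction identities \eqref{eqA1}--\eqref{eqA4}, combined with the characterization of $X^\nabla$ from Proposition~\ref{SXfedosovpapermainresult1} (namely $\etendu{h}(X^\nabla) = 0$ and $X^\nabla \in \sections{\LADLs \otimes \hat{S}^{\geqslant 2} \moduleBs \otimes \moduleB}$), should supply the required cancellations, reducing the vanishing of $\ell_n$ for $n \geq 4$ to an identity verifiable on generators.
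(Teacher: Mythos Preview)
Your overall strategy --- compare the transferred brackets with those of Proposition~\ref{Prop: main1continue} on the generating subspace $\cinf{M}\oplus\sections{\LADAs}\oplus\sections{\moduleB}$ and then invoke the multiderivation property --- is exactly what the paper does. However, two points in your execution are problematic.

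First, your claim that $\smiletaunatural$ collapses to $\taunatural$ on $\cinf{M}$ and $\sections{\LADAs}$ is false, and the error is not cosmetic. Equation~\eqref{Eqt: smiletaunaturalxi} in the paper gives
\[ \smiletaunatural(\theta)\equiv \theta + h(\dBA\theta) + h(\dbeta\theta) \mod \big(\Cddd{1}{0}{\geq 2}\oplus\Cddd{0}{1}{\geq 2}\big), \]
and an analogous expansion holds for $f\in\cinf{M}$. These correction terms are precisely what produces the nonzero values $\ell_2(b,\theta)=\baBAtoA{b}{\theta}$, $\ell_2(b,f)=\anchorB(b)f$, and $\ell_3(b_1,b_2,\theta)=\pairing{\baBtoA{b_1}{b_2}}{\theta}$. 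If you used $\taunatural(\theta)=\theta$ instead, the Schouten bracket $[\smiletaunatural(b),\theta]$ would vanish (the vertical vector fields annihilate anything with $r=0$ in $\Cddd{p}{q}{r}$), and you would wrongly conclude $\ell_2(b,\theta)=0$.

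Second, your proposed mechanism for killing $\ell_n$ with $n\geq 4$ --- the semifull identities \eqref{eqA1}--\eqref{eqA4} together with $\etendu{h}(X^\nabla)=0$ --- is aimed at the wrong target. The semifull identities concern the multiplicative structure and have already been used, via Theorem~\ref{Bandiera: homotopytransfer}, to guarantee that the transferred brackets are multiderivations; they say nothing about their values on generators. The paper's actual argument is a filtration argument in the tridegree $(p,q,r)$ on $\Cddd{p}{q}{r}$. The key facts are: $\smilehnatural$ annihilates any input with $q=0$ (every term in its defining series ends in $\hnatural$, which lowers $q$); $\sigmanatural$ annihilates any output with $q\geq 1$ or $r\geq 1$; and the Schouten bracket sends $\Cddd{p}{q}{r}\otimes\Cddd{a}{b}{c}$ into $\Cddd{p+a}{q+b}{r+c-1}$. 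Tracking these gives the two vanishing lemmas the paper uses: $\sigmanatural[\smilehnatural(\varpi_1),\smilehnatural(\varpi_2)]=0$ for all $\varpi_1,\varpi_2$ (Equation~\eqref{Eqt: temp3}), and $\smilehnatural[\smiletaunatural(x),\smilehnatural[\smiletaunatural(y),\smiletaunatural(z)]]=0$ for generators $x,y,z$ (Equation~\eqref{Eqt: contracted4brackets2}). Together these force $\ell_4=0$ on generators, and the same bookkeeping disposes of all higher $\ell_n$.
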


Note that the construction of a Fedosov manifold $\cM$ and
its Fedosov dg Lie algebroid $\cF\to \cM$ depends on
the choice of a torsion free $L$-connection $\nabla$ on $B$ extending
the $A$-module structure of $B$. The above proposition
indicates that, however, the degree $(+1)$ derived Poisson algebra structure on $\OmegaAwedgeB$ obtained by homotopy transfer from
$(\Gamma (\cM; \wedge^\bullet \cF), \LieDer_Q)$
is independent of the choice of $\nabla$.

The rest of this section is devoted to prove this proposition. We need to recall some techniques and facts that are already shown in \cite{BSX:17}.
Let us resume the settings and notions in Section \ref{Sec: resultofoutaliepair}.
We chose a splitting of Sequence \eqref{Eqt: exactseq} so that one treats $L=A\oplus B$ directly.

Given a torsion free $L$-connection $\nabla$ on $B$ that extends the $A$-module structure of $B$, one can write
\[ \nabla_{a+b}b'=\nabla^A_ab'+\Delta^B_bb',\quad\forall a\in\sections{A},b,b'\in\sections{B} .\]
Here the $\Delta^B$ can be thought of as a $B$-``connection'' on $B$.
The condition that $\nabla$ being torsion free means
\begin{equation}\label{Eqt: torisionfreenabla}
\Delta^B_{b_1}b_2-\Delta^B_{b_2}b_1=\baB{b_1}{b_2},\quad\forall b_1,b_2\in\sections{B}
.\end{equation}
Here $\baB{\argument}{\argument}$ is the $B$-``bracket'' introduced in Equation \eqref{Eqt: allstructuremaps}.

In what follows, let us denote \[ \Cddd{p}{q}{r}=\sections{ \Lambda^{p}A^\vee\otimes \Lambda^{q} B^\vee\otimes {S}^{r} B^\vee} .\]
Recall $\cM=L[1]\oplus B$. Hence \[ \cinf{\cM}=\sections{ \Lambda^{\bullet} L^\vee\otimes \hat{S} B^\vee}
=\sections{ \Lambda^{\bullet} A^\vee\otimes \Lambda^{\bullet} B^\vee\otimes \hat{S} B^\vee}
=\prod_{p,q,r\geq 0}\Cddd{p}{q}{r} .\]

Recall Equation \eqref{Eqt: dL3parts}, where we split $\dL$ into three components. Abusing notations, let us again write
\[ \dL^\nabla=\dAB+\dBA+\dbeta: \,\sections{\Lambda^{\bullet} L^\vee\otimes \hat{S} {B^\vee}}\to\sections{\Lambda^{\bullet+1}L^\vee\otimes \hat{S} {B^\vee}} ,\]
where
\begin{gather*}
\dAB: \,\Cddd{p}{q}{r} \to \Cddd{p+1}{q}{r} ,\\
\dBA: \,\Cddd{p}{q}{r} \to \Cddd{p}{q+1}{r} ,\\
\dbeta: \,\Cddd{p}{q}{r} \to \Cddd{p-1}{q+2}{r}
.\end{gather*}

All the following are due to the relevant definitions and facts in~\cite{BSX:17}.
\begin{itemize}
\item The kernel of $\sigmanatural$ is
$(\Cddd{\bullet}{\geq 1}{\bullet}\oplus \Cddd{\bullet}{\bullet}{\geq 1})\otimes \sections{\Lambda^\bullet B}$.
\item The map $\etendu{h}$ sends
$\Cddd{p}{q}{r}\otimes \sections{\Lambda^s B}$ to $\Cddd{p}{q-1}{r+1}\otimes \sections{\Lambda^s B}$.
\item The map \[ \varrho=d_L^\nabla+X^\nabla \]
is a degree $(+1)$ derivation of $\cinf{\Fedosov}$, and in fact a perturbation of the cochain complex $(\cinf{\Fedosov},-\delta)$.
Moreover, the map $\LieDer_\varrho$ satisfies
\[ \LieDer_\varrho(\Cddd{p}{q}{r}\otimes \sections{\Lambda^s B})\subset (\Cddd{p+1}{q}{\geq r}\oplus \Cddd{p}{q+1}{\geq r}\oplus \Cddd{p-1}{q+2}{\geq r})\otimes\sections{\Lambda^s B} .\]
\item The Schouten--Nijenhuis bracket in $\sections{\Lambda^{\bullet} L^\vee\otimes\hat{S} B^\vee\otimes\Lambda^{\bullet} B}=\Cddd{\bullet}{\bullet}{\bullet}\otimes \sections{\Lambda^\bullet B}$ satisfies
\[ [\Cddd{p}{q}{r}\otimes \sections{\Lambda^s B},\Cddd{a}{b}{c}\otimes \sections{\Lambda^d B}]
\subset \Cddd{p+a}{q+b}{r+c-1}\otimes \sections{\Lambda^{s+d-1} B} .\]
\end{itemize}

Using these facts, the following formulas can be straightforward verified.

\begin{lemma}\label{lemma: temp1}
\begin{itemize}
\item[1)]
For any $b\in\sections{B}$, one has
\begin{equation}\label{Eqt: smiletaunaturalb}
\smiletaunatural{(b)}=b+\sum_{i=1}^\infty(\etendu{h}\circ\LieDer_\varrho)^i(b)\equiv b+\etendu{h}(\dBA b) \mod \Cddd{0}{0}{\geq 2}\otimes\sections{B}.
\end{equation}
Note that $\etendu{h}(\dBA b)\in\Cddd{0}{0}{1}\otimes\sections{B}$.
\item[2)]
For any $\theta\in\sections{A^\vee}$, one has
\begin{equation}\label{Eqt: smiletaunaturalxi}
\smiletaunatural{(\theta)}=\theta+\sum_{i=1}^\infty({h}\circ\varrho)^i(\theta)\equiv \theta+ {h}(\dBA \theta)+{h}(\dbeta \theta)
\mod (\Cddd{1}{0}{\geq 2}\oplus\Cddd{0}{1}{\geq 2}).
\end{equation}
Note that $ {h}(\dBA \theta)\in\Cddd{1}{0}{1}$ and ${h}(\dbeta\theta)\in\Cddd{0}{1}{1}$.
\item[3)]For any $\varpi\in\Cddd{p}{q}{r}\otimes\sections{\Lambda^s B}$, one has
\begin{equation}\label{Eqt: smilehnaural}
\smilehnatural(\varpi)\equiv\hnatural(\varpi)\mod\bigoplus_{i+j=p+q-1}\Cddd{i}{j}{\geq r+2}\otimes\sections{\Lambda^s B}.
\end{equation}
Note that $\hnatural(\varpi)\in\Cddd{p}{q-1}{r+1}\otimes\sections{\Lambda^s B}$.
\end{itemize}
\end{lemma}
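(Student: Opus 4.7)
\medskip

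The plan is to establish all three statements by a uniform tri-degree bookkeeping on $\Cddd{p}{q}{r} \otimes \sections{\Lambda^\bullet B}$, expanding the geometric series that define $\smiletaunatural$ and $\smilehnatural$ via the Perturbation Lemma~\ref{lem: PL} and using the four bulleted facts listed just above the statement. The upshot is that each application of $\etendu{h} \circ \LieDer_\varrho$ (or $h\circ\varrho$) strictly raises the $r$-weight, so that the terms with $i\geq 2$ fall into the stated ideal, while the $i=1$ term is computed by extracting the $r$-preserving part of $\LieDer_\varrho$, which, by the very form of $\varrho = d_L^\nabla + X^\nabla$, is exactly $\dAB + \dBA + \dbeta$.

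For statement (1), starting from $b \in \Cddd{0}{0}{0} \otimes \sections{\Lambda^1 B}$, the third and fourth bullets give $\LieDer_\varrho(b) \in (\Cddd{1}{0}{\geq 0} \oplus \Cddd{0}{1}{\geq 0}) \otimes \sections{B}$ (the $\Cddd{p-1}{q+2}$ slot vanishes since $p=0$), and then $\etendu{h}$ kills the first summand (because it would send it to $\Cddd{1}{-1}{\geq 1}=0$) while mapping the second to $\Cddd{0}{0}{\geq 1} \otimes \sections{B}$. A straightforward induction yields $(\etendu{h}\LieDer_\varrho)^i(b) \in \Cddd{0}{0}{\geq i} \otimes \sections{B}$, so only the $i=1$ term contributes modulo $\Cddd{0}{0}{\geq 2}\otimes \sections{B}$. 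Extracting the $r$-weight zero part of $\LieDer_\varrho(b)$, the $X^\nabla$-contribution strictly raises $r$ and so drops out, and the component in $\Cddd{0}{1}{0} \otimes \sections{B}$ is precisely $\dBA(b)$; applying $\etendu{h}$ yields~\eqref{Eqt: smiletaunaturalb}.

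For statement (2), an identical bookkeeping shows that $h\circ\varrho$ preserves the subspace $\Cddd{1}{0}{\geq r} \oplus \Cddd{0}{1}{\geq r}$ and raises $r$ by at least one at each step, so $(h\varrho)^i(\theta) \in \Cddd{1}{0}{\geq i} \oplus \Cddd{0}{1}{\geq i}$ and all terms with $i\geq 2$ lie in the prescribed ideal. The $i=1$ contribution picks up the $r$-weight zero piece of $\varrho(\theta)$, namely $\dAB\theta + \dBA\theta + \dbeta\theta$; the first summand sits in $\Cddd{2}{0}{0}$ and is killed by $h$, leaving $h\varrho(\theta) \equiv h(\dBA\theta) + h(\dbeta\theta)$ modulo $\Cddd{1}{0}{\geq 2} \oplus \Cddd{0}{1}{\geq 2}$, as in~\eqref{Eqt: smiletaunaturalxi}.

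For statement (3), I start from $\hnatural(\varpi) \in \Cddd{p}{q-1}{r+1}\otimes\sections{\Lambda^s B}$, whose total bidegree $p+q$ is $p+q-1$. Each factor $\etendu{h}\circ\LieDer_\varrho$ preserves the total $p+q$ grade (since $\LieDer_\varrho$ raises it by one and $\etendu{h}$ lowers it by one) and raises $r$ by at least one (since $\etendu{h}$ raises $r$ strictly while $\LieDer_\varrho$ weakly does so). Consequently, every term with $i\geq 1$ in $\smilehnatural(\varpi) - \hnatural(\varpi)$ lies in $\bigoplus_{i+j = p+q-1} \Cddd{i}{j}{\geq r+2} \otimes \sections{\Lambda^s B}$, which is exactly~\eqref{Eqt: smilehnaural}. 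The only non-routine ingredient throughout is the identification of the $r$-preserving part of $\LieDer_\varrho$ with $\dAB+\dBA+\dbeta$; this rests on the fact that $d_L^\nabla$ is a covariant derivative twisting only the $\hat{S} B^\vee$ factor (hence its $r$-preserving part is the untwisted Chevalley--Eilenberg differential with its standard three-term decomposition) together with $X^\nabla \in \sections{L^\vee \otimes \hat{S}^{\geq 2}B^\vee\otimes B}$ strictly raising $r$.
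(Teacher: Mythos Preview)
Your proof is correct and follows precisely the route the paper itself indicates: the paper states only that the lemma ``can be straightforward verified'' from the four bulleted facts about $\sigmanatural$, $\etendu{h}$, $\LieDer_\varrho$, and the Schouten bracket, and your tri-degree bookkeeping is exactly that verification spelled out. One small wording quibble: it is not that $d_L^\nabla$ has an ``$r$-preserving part'' equal to $d_L$---all of $d_L^\nabla$ preserves $r$---but rather that on inputs with $r=0$ (namely $b$ and $\theta$) it reduces to the connection-free $d_L=\dAB+\dBA+\dbeta$ (extended via $\nabla$ to the $\Lambda^\bullet B$ factor), which is what you actually use and which is consistent with the paper's subsequent lemma computing $\contraction{b_1}\etendu{h}(\dBA b_2)=\Delta^B_{b_1}b_2$.
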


An immediate consequence is the following fact.
\begin{lemma}
For any $\varpi_1,\varpi_2\in\Cddd{\bullet}{\bullet}{\bullet}\otimes\sections{\Lambda^s B}$, one has
\begin{equation}\label{Eqt: temp3}
\sigmanatural[\smilehnatural(\varpi_1),\smilehnatural(\varpi_2)]=0.
\end{equation}
\end{lemma}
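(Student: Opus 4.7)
The plan is to use Lemma \ref{lemma: temp1}(3) to track the $\hat{S}B^\vee$-degree of $\smilehnatural(\varpi_i)$ and then exploit the explicit description of $\ker\sigmanatural$ recalled just before Lemma \ref{lemma: temp1}.

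First, I would observe that, by Lemma \ref{lemma: temp1}(3), if $\varpi\in\Cddd{p}{q}{r}\otimes\sections{\Lambda^{s}B}$, then
\[ \smilehnatural(\varpi)\in\bigoplus_{i+j=p+q-1}\Cddd{i}{j}{\geq r+1}\otimes\sections{\Lambda^{s}B}, \]
since both the leading term $\hnatural(\varpi)\in\Cddd{p}{q-1}{r+1}\otimes\sections{\Lambda^{s}B}$ and the remainder lie in components whose $\hat{S}B^\vee$-weight is at least $r+1$. In particular, the $\hat{S}B^\vee$-weight of $\smilehnatural(\varpi_i)$ is at least $1$, regardless of the original weight $r_i$.

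Next, I would apply the multiplicativity property of the Schouten--Nijenhuis bracket recalled in the list preceding Lemma \ref{lemma: temp1}: for weights $(p,q,r;s)$ and $(a,b,c;d)$ one has
\[ [\Cddd{p}{q}{r}\otimes\sections{\Lambda^{s}B},\Cddd{a}{b}{c}\otimes\sections{\Lambda^{d}B}]\subset \Cddd{p+a}{q+b}{r+c-1}\otimes\sections{\Lambda^{s+d-1}B}. \]
Combining this with the previous step and decomposing $\varpi_i$ according to its tri-weights, the bracket $[\smilehnatural(\varpi_1),\smilehnatural(\varpi_2)]$ lands in a sum of components of total $\hat{S}B^\vee$-weight at least $(r_1+1)+(r_2+1)-1=r_1+r_2+1\geq 1$.

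Finally, since the kernel of $\sigmanatural$ is described by
\[ \ker\sigmanatural=\bigl(\Cddd{\bullet}{\geq 1}{\bullet}\oplus\Cddd{\bullet}{\bullet}{\geq 1}\bigr)\otimes\sections{\Lambda^\bullet B}, \]
any element of strictly positive $\hat{S}B^\vee$-weight is annihilated by $\sigmanatural$, and the identity \eqref{Eqt: temp3} follows at once. The only subtlety is making sure the tri-degree bookkeeping survives the infinite series defining $\smilehnatural$, which is exactly what Lemma \ref{lemma: temp1}(3) guarantees; no further computation with $\LieDer_\varrho$ is needed.
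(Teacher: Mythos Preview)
Your proof is correct and is exactly the argument the paper has in mind: the paper states this lemma as ``an immediate consequence'' of Lemma~\ref{lemma: temp1} without further proof, and your tri-degree bookkeeping (the $\hat{S}B^\vee$-weight of each $\smilehnatural(\varpi_i)$ is at least $1$, hence the bracket has weight $\geq 1$ and lies in $\ker\sigmanatural$) is precisely how that immediate consequence is cashed out.
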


The following identities are due to the definition of $h$ (see \cite{BSX:17}). 
We omit the details of verification.

\begin{lemma}
For all $b,b_1,b_2\in\sections{B}$, $\theta\in\sections{A^\vee}$, we have
\begin{eqnarray*}
\contraction{b_1}\etendu{h}(\dBA b_2)&=&\Delta^B_{b_1}b_2,\\
\contraction{b}{h}(\dBA \theta)&=&\Delta_{b}\theta,\\
\contraction{b_1} h \contraction{b_2} {h}(\dbeta \theta) &=& \frac{1}{2}\pairing{\baBtoA{b_1}{b_2}}{\theta}.
\end{eqnarray*}
Here $\Delta$ is the $B$-``connection'' on $A$ introduced in
Equation \eqref{eq: ba}.
\end{lemma}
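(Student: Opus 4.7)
The plan is to verify each of the three identities by direct local computation, unraveling the explicit formulas for $\delta$, $h$, $\etendu{h}$, $\dBA$ and $\dbeta$ given in \cite{BSX:17}. The key ingredient is that $h$ is a contracting homotopy for $-\delta$: locally, in fiber coordinates $y^i$ on $B$ with odd counterparts $\eta^i$ spanning $\Lambda^1 B^\vee$, one has $\delta = \sum_i \eta^i \partial_{y^i}$, and $h$ converts one $\eta^i$-factor into a $y^i$-factor with a normalization inversely proportional to the total $\Lambda B^\vee$-plus-$S B^\vee$ degree. In particular, $h$ sends $\Cddd{p}{q+1}{r}$ into $\Cddd{p}{q}{r+1}$, while $\etendu{h} = h \otimes 1_{\Lambda^\bullet B}$ acts only on the $\Lambda^\bullet L^\vee \otimes \hat{S}B^\vee$-factor.

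For the first identity, I would regard $b_2 \in \sections{B}$ as an element of $\Cddd{0}{0}{0} \otimes \sections{\Lambda^1 B}$. From the extension of $\dBA$ to the full algebra, $\dBA b_2 \in \Cddd{0}{1}{0} \otimes \sections{\Lambda^1 B}$ is the $\Lambda^1 B^\vee$-valued expression whose pairing with $b_1 \in \sections{B}$ recovers $\Delta^B_{b_1}b_2$. Applying $\etendu{h}$ converts the $\Lambda^1 B^\vee$-slot into an $S^1 B^\vee$-slot, and contraction with $b_1$ then evaluates this symmetric slot at $b_1$, yielding $\Delta^B_{b_1}b_2$. The second identity follows by the same mechanism, interpreting $\dBA \theta \in \Cddd{1}{1}{0}$ as the $B^\vee$-valued expression encoding $b \mapsto \Delta_b \theta$.

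The third identity is the most delicate. By definition $\dbeta\theta \in \Cddd{0}{2}{0}$ is the antisymmetric bilinear form $(b_1,b_2) \mapsto -\pairing{\theta}{\baBtoA{b_1}{b_2}}$. A first application of $h$ converts one of the two antisymmetric $B^\vee$-slots into a symmetric one and carries a normalization factor $\tfrac{1}{2}$, since the Koszul identity $h\delta+\delta h = \id$ forces $h$ on a pure $2$-antisymmetric tensor to divide by $2$. Contraction with $b_2$ then evaluates the resulting symmetric slot on $b_2$. A second application of $h$ converts the remaining antisymmetric slot into a symmetric one (now with normalization factor $1$), and contracting with $b_1$ evaluates it on $b_1$, producing $\tfrac{1}{2}\pairing{\baBtoA{b_1}{b_2}}{\theta}$.

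The main obstacle is the sign and combinatorial bookkeeping, particularly the emergence of the $\tfrac{1}{2}$ in the third identity, which traces directly to the normalization of $h$ on pure $2$-antisymmetric tensors. Once the explicit local formulas of \cite{BSX:17} are written out, no conceptual difficulty arises and each identity unwinds immediately.
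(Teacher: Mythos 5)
The paper offers no proof of this lemma at all --- it simply asserts that the identities ``are due to the definition of $h$ (see \cite{BSX:17})'' and omits the verification --- so your proposal is precisely the computation the authors chose to skip, carried out by the intended route. Your sketch correctly identifies the mechanism: $h$ is the Fedosov-type homotopy for $-\delta$ with normalization inversely proportional to the combined $\Lambda^\bullet B^\vee$- and $\hat{S}B^\vee$-degree, the contractions $\contraction{b}$ evaluate the symmetric slot created by each application of $h$, and the factor $\tfrac12$ in the third identity arises exactly from the first application of $h$ to $\dbeta\theta\in\Cddd{0}{2}{0}$, whose relevant degree is $2$.
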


We are now able to show the following
\begin{lemma}
For $b_1,b_2\in\sections{B}$, $\theta_1,\theta_2\in\sections{A^\vee}$ and $f\in\cinf{M}$, one has
\begin{align}\label{Eqt: contractedbrackets}
\begin{cases}
\sigmanatural[\smiletaunatural{(b_1)},\smiletaunatural{(b_2)}]= \baB{b_1}{b_2} ,\\
\sigmanatural[\smiletaunatural{(b_1)},\smiletaunatural{(\theta_1)}]=\baBAtoA{b_1}{\theta_1} ,\\
\sigmanatural[\smiletaunatural{(b_1)},f]=\anchorB{(b_1)}{f} ,\\\sigmanatural[\smiletaunatural{(\theta_1) },\smiletaunatural{(\theta_2)}]=0
.\end{cases}
\end{align}
\end{lemma}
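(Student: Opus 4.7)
The proof proceeds by systematic expansion of each $\smiletaunatural(\cdot)$ using Lemma~\ref{lemma: temp1}, followed by careful tridegree bookkeeping in the Schouten--Nijenhuis bracket. The plan is to combine three ingredients: the reductions
\[
\smiletaunatural(b_i) \equiv b_i + \etendu{h}(\dBA b_i) \pmod{\Cddd{0}{0}{\geq 2}\otimes\sections{B}},
\]
\[
\smiletaunatural(\theta_j) \equiv \theta_j + h(\dBA \theta_j) + h(\dbeta\theta_j) \pmod{\Cddd{1}{0}{\geq 2}\oplus\Cddd{0}{1}{\geq 2}},
\]
the tridegree shift formula for the Schouten bracket recalled just before the lemma, and the vanishing of $\sigmanatural$ on $\Cddd{\bullet}{\geq 1}{\bullet}\oplus\Cddd{\bullet}{\bullet}{\geq 1}$. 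Together these restrict each bracket to a short finite list of surviving summands, and Equation~\eqref{Eqt: temp3} disposes of any bracket between two $\smilehnatural$- or $h$-type correction terms.

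For the first identity, the only surviving summands are $[b_1,\etendu{h}(\dBA b_2)]$ and $[\etendu{h}(\dBA b_1),b_2]$. Using the contraction identity $\contraction{b}\etendu{h}(\dBA b') = \Delta^B_b b'$ from the preceding lemma, these evaluate to $\Delta^B_{b_1}b_2$ and $-\Delta^B_{b_2}b_1$, whose sum simplifies to $\baB{b_1}{b_2}$ by the torsion-free identity~\eqref{Eqt: torisionfreenabla}. For the second identity, the only contributing cross-term is $[b_1,h(\dBA\theta_1)] = \contraction{b_1}h(\dBA\theta_1) = \Delta_{b_1}\theta_1 = \baBAtoA{b_1}{\theta_1}$; the would-be contribution from $h(\dbeta\theta_1)\in\Cddd{0}{1}{1}$ is excluded because its nonzero $q$-degree cannot be compensated to land in $\sigmanatural$'s image. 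The third identity is analogous, returning $\anchorB(b_1)f$ via the defining relation $\pairing{\dBA f}{b}=\anchorB(b)f$ for $f\in\cinf{M}$.

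The fourth identity is immediate on degree grounds: each $\smiletaunatural(\theta_j)$ lies entirely in $\sections{\Lambda^\bullet L^\vee\otimes\hat{S}B^\vee}$, i.e.\ has multivector degree zero in $\sections{\Lambda^\bullet\cF}$, and the Schouten--Nijenhuis bracket of two elements of multivector degree zero vanishes identically.

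The main obstacle will be the bookkeeping itself: precise sign-tracking in the Schouten bracket expansions together with confirming that no hidden higher-order tail of $\smiletaunatural$ slips through to contribute. The combined use of Equation~\eqref{Eqt: temp3} (for pure $\smilehnatural$- or $h$-type correction brackets) and tridegree counting (for everything else) is what keeps the required computations finite.
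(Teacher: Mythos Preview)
Your proposal is correct and follows essentially the same route as the paper: expand $\smiletaunatural$ via Lemma~\ref{lemma: temp1}, use the tridegree shift of the Schouten bracket together with $\ker\sigmanatural = (\Cddd{\bullet}{\geq 1}{\bullet}\oplus\Cddd{\bullet}{\bullet}{\geq 1})\otimes\sections{\Lambda^\bullet B}$, and identify the surviving terms using the contraction identities and the torsion-free condition~\eqref{Eqt: torisionfreenabla}. One small remark: your invocation of Equation~\eqref{Eqt: temp3} is unnecessary here, since that identity concerns brackets of $\smilehnatural$-images (relevant for the ternary bracket), whereas the cross-terms you need to kill in the present binary computation, such as $[\etendu{h}(\dBA b_1),\etendu{h}(\dBA b_2)]\in\Cddd{0}{0}{1}\otimes\sections{B}$, are already disposed of by straight tridegree counting.
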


\begin{proof}
By Equation \eqref{Eqt: smiletaunaturalb}, we have
\begin{eqnarray}\nonumber
[\smiletaunatural{(b_1)},\smiletaunatural{(b_2)}]
&\equiv & \contraction{b_1}\etendu{h}(\dBA b_2)-\contraction{b_2}\etendu{h}(\dBA b_1) \mod \Cddd{0}{0}{\geq 1}\otimes\sections{B} \\\nonumber
&\equiv & \Delta^B_{b_1}b_2-\Delta^B_{b_2}b_1 \mod \Cddd{0}{0}{\geq 1}\otimes\sections{B} \\\label{Eqt: temp1}
&\equiv & [b_1, b_2]_B \mod \Cddd{0}{0}{\geq 1}\otimes\sections{B}.
\end{eqnarray}
The last step is due to $\nabla$ being torsion free (Equation \eqref{Eqt: torisionfreenabla}).
Applying $\sigmanatural$, the first identity in Equation \eqref{Eqt: contractedbrackets} is immediate.

Similarly, by Equations \eqref{Eqt: smiletaunaturalb} and \eqref{Eqt: smiletaunaturalxi},
\begin{eqnarray}\nonumber
[\smiletaunatural{(b)},\smiletaunatural{(\theta)}]
&\equiv & \contraction{b} {h}(\dBA \theta)+\contraction{b} {h}(\dbeta \theta) \mod (\Cddd{1}{0}{\geq 1}\oplus\Cddd{0}{1}{\geq 1}) \\\label{Eqt: temp2}
&\equiv & \Delta_b \theta +\contraction{b} {h}(\dbeta \theta) \mod (\Cddd{1}{0}{\geq 1}\oplus\Cddd{0}{1}{\geq 1}).
\end{eqnarray}
Notice that $\contraction{b}{h}(\dbeta\theta)\in\Cddd{0}{1}{0}$.
Then applying $\sigmanatural$, the second identity in Equation \eqref{Eqt: contractedbrackets} is immediate.

The third identity easily follows from Equation \eqref{Eqt: smiletaunaturalb}:
\[ [\smiletaunatural{(b_1)},f]\equiv \anchorB(b_1) f \mod \Cddd{0}{0}{\geq 1} .\]
The last identity is obvious.
\end{proof}

\begin{lemma}
For $b,b_1,b_2,b_3\in\sections{B}$ and $\theta,\theta_1,\theta_2,\theta_3\in\sections{A^\vee}$, one has
\begin{align}\label{Eqt: contracted3brackets}
\begin{cases}
\sigmanatural\bigl[\smiletaunatural(b_1),\smilehnatural[\smiletaunatural{(b_2)},\smiletaunatural{(\theta)}]\bigr]= \frac{1}{2}\pairing{\baBtoA{b_1}{b_2}}{\theta} ,\\
\sigmanatural\bigl[\smiletaunatural(\theta),\smilehnatural[\smiletaunatural{(b_1)},\smiletaunatural{(b_2)}] \bigr]= 0,\\
\sigmanatural\bigl[\smiletaunatural(b_1),\smilehnatural[\smiletaunatural{(b_2)},\smiletaunatural{(b_3)}] \bigr]= 0,\\
\sigmanatural\bigl[\smiletaunatural(b),\smilehnatural[\smiletaunatural{(\theta_1)},\smiletaunatural{(\theta_2)}] \bigr]= 0,\\
\sigmanatural\bigl[\smiletaunatural(\theta_1),\smilehnatural[\smiletaunatural{(\theta_2)},\smiletaunatural{(b)}] \bigr]= 0,\\
\sigmanatural\bigl[\smiletaunatural(\theta_1),\smilehnatural[\smiletaunatural{(\theta_2)},\smiletaunatural{(\theta_3)}] \bigr]= 0
.\end{cases}
\end{align}
\end{lemma}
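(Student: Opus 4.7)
The plan is to establish each of the six identities in \eqref{Eqt: contracted3brackets} by the same filtration-and-projection strategy used for Equation \eqref{Eqt: contractedbrackets}: expand $\smiletaunatural$ and $\smilehnatural$ up to a controlled tridegree via Lemma \ref{lemma: temp1}, track the Schouten bracket modulo terms killed by $\sigmanatural$ (namely $\Cddd{\bullet}{\geq 1}{\bullet}\oplus \Cddd{\bullet}{\bullet}{\geq 1}$ times $\sections{\Lambda^\bullet B}$), and identify the surviving contribution.

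Four of the identities vanish for structural reasons. Identities 4 and 6 contain an inner bracket of the form $[\smiletaunatural(\theta_i),\smiletaunatural(\theta_j)]$, which is the Schouten bracket of two functions on $\cM$ and hence zero. For identity 5, the inner bracket $[\smiletaunatural(\theta_2),\smiletaunatural(b)]$ lies in $\cinf{\cM}$ (bracket of function with vector), and since $\smilehnatural$ preserves $\Lambda^\bullet B$-degree (both $\etendu{h}$ and the perturbation tail do), its image stays in $\cinf{\cM}$; the outer Schouten bracket with the function $\smiletaunatural(\theta_1)$ therefore vanishes. For identities 2 and 3, the expansion \eqref{Eqt: smiletaunaturalb} plus the perturbation formula show $\smiletaunatural(b_j)\in \Cddd{0}{0}{\bullet}\otimes\sections{B}$; the Schouten bracket of two such sections stays in $\Cddd{0}{0}{\bullet}\otimes\sections{B}$ (in coordinates, it is $\sum f^k\partial_k$ with coefficients in $\sections{\hat{S}B^\vee}$). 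Because $h$ lowers the exterior $B^\vee$-degree by $1$, it vanishes on $\Cddd{\bullet}{0}{\bullet}$, and since every term of $\smilehnatural=\sum_{i\geq 0}(\etendu{h}\LieDer_\varrho)^i\hnatural$ begins with $\hnatural$, we conclude $\smilehnatural[\smiletaunatural(b_i),\smiletaunatural(b_j)]=0$.

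The substantive case is identity 1. First, by \eqref{Eqt: temp2},
\[ [\smiletaunatural(b_2),\smiletaunatural(\theta)] \equiv \Delta_{b_2}\theta + \contraction{b_2}h(\dbeta\theta) \pmod{\Cddd{1}{0}{\geq 1}\oplus\Cddd{0}{1}{\geq 1}}. \]
Next, apply $\smilehnatural$: since $\Delta_{b_2}\theta\in\Cddd{1}{0}{0}$ is killed by $\hnatural$ (its image would sit in $\Cddd{1}{-1}{1}=0$), Equation \eqref{Eqt: smilehnaural} places $\smilehnatural(\Delta_{b_2}\theta)$ in $\Cddd{0}{0}{\geq 2}$; for $\contraction{b_2}h(\dbeta\theta)\in\Cddd{0}{1}{0}$ we get $\smilehnatural(\contraction{b_2}h(\dbeta\theta))\equiv h\contraction{b_2}h(\dbeta\theta)\in\Cddd{0}{0}{1}$ modulo $\Cddd{0}{0}{\geq 2}$; the corrections in $\Cddd{1}{0}{\geq 1}\oplus\Cddd{0}{1}{\geq 1}$ also map into $\Cddd{0}{0}{\geq 2}$ under $\smilehnatural$. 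Finally, using \eqref{Eqt: smiletaunaturalb} to write $\smiletaunatural(b_1)\equiv b_1 \pmod{\Cddd{0}{0}{\geq 1}\otimes\sections{B}}$ and acting by $b_1$ as a vertical derivation on the function $h\contraction{b_2}h(\dbeta\theta)\in\Cddd{0}{0}{1}$, we obtain
\[ [b_1,h\contraction{b_2}h(\dbeta\theta)] = \contraction{b_1}h\contraction{b_2}h(\dbeta\theta)=\tfrac{1}{2}\pairing{\baBtoA{b_1}{b_2}}{\theta}, \]
by the identity supplied just before the lemma. All remaining contributions lie in $\Cddd{\bullet}{\geq 1}{\bullet}\oplus\Cddd{\bullet}{\bullet}{\geq 1}\subset \ker\sigmanatural$, so applying $\sigmanatural$ retains only the claimed scalar.

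The main obstacle will be the bookkeeping in identity 1: one must confirm that every correction term, arising from the higher pieces of $\smiletaunatural$, of $\smilehnatural$, and from the perturbation tails involving $\LieDer_\varrho$, lands in $\ker\sigmanatural$. This follows from the stated tridegree shifts ($\LieDer_\varrho$ raises $(p,q,r)$ into $\Cddd{p+1}{q}{\geq r}\oplus\Cddd{p}{q+1}{\geq r}\oplus\Cddd{p-1}{q+2}{\geq r}$, $\etendu{h}$ sends $(p,q,r)\mapsto(p,q-1,r+1)$, and the Schouten bracket satisfies the degree estimate recalled above), but it requires careful case-tracking to verify that the only tridegree-$(0,0,0)$ output comes from the single term $\contraction{b_1}h\contraction{b_2}h(\dbeta\theta)$.
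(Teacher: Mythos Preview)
Your proof is correct and follows the same filtration-and-projection strategy as the paper: for identity~1 the paper's argument is virtually identical to yours, reducing $\smilehnatural[\smiletaunatural(b_2),\smiletaunatural(\theta)]$ to $h\contraction{b_2}h(\dbeta\theta)$ modulo $\Cddd{0}{0}{\geq 2}$ via \eqref{Eqt: temp2} and \eqref{Eqt: smilehnaural} and then contracting with $b_1$. For identities 2--6 the paper merely says they ``can be worked out similarly,'' whereas you supply clean structural reasons (the Schouten bracket of two functions vanishes; $\smiletaunatural(b)\in\Cddd{0}{0}{\bullet}\otimes\sections{B}$ so $\hnatural$ annihilates the inner bracket in cases 2 and 3), which is a welcome sharpening of detail but not a different method.
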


\begin{proof}
We show the first identity in \eqref{Eqt: contracted3brackets}.
By Equations \eqref{Eqt: temp2} and \eqref{Eqt: smilehnaural}, we have
\[ \smilehnatural [\smiletaunatural{(b_2)},\smiletaunatural{(\theta)}] \equiv h\contraction{b_2}{h}(\dbeta \theta) \mod\Cddd{0}{0}{\geq 2} ,\]
where $h\contraction{b_2}{h}(\dbeta \theta)\in\Cddd{0}{0}{1}$.

Therefore, using Equation \eqref{Eqt: smiletaunaturalb}, we get
\begin{eqnarray*}
\bigr[\smiletaunatural(b_1),\smilehnatural
[\smiletaunatural{(b_2)},\smiletaunatural{(\theta)}]\bigr]
&\equiv& \contraction{b_1} h \contraction{b_2} {h}(\dbeta \theta) \mod \Cddd{0}{0}{\geq 1}\\
&\equiv& \frac{1}{2}\pairing{\baBtoA{b_1}{b_2}}{\theta} \mod \Cddd{0}{0}{\geq 1}.
\end{eqnarray*}
Applying $\sigmanatural$, one gets the first identity.
The remaining identities can be worked out similarly.
\end{proof}

The following lemma is proved along the same lines.
\begin{lemma}
The following equation holds for any $x,y,z\in\sections{B}$ or $\sections{A^\vee}$:
\begin{equation}\label{Eqt: contracted4brackets2}
\smilehnatural\bigl[\smiletaunatural(x),\smilehnatural[\smiletaunatural{(y)},\smiletaunatural{(z)}]\bigr]= 0.
\end{equation}
\end{lemma}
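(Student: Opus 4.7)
The plan rests on two structural observations combined with a case analysis on whether each of $x,y,z$ lies in $\sections{B}$ or in $\sections{A^\vee}$. First, since $\smilehnatural=\hnatural+\sum_{i\geq1}(\etendu{h}\circ\LieDer_\varrho)^i\,\hnatural$ has $\hnatural=h\otimes 1$ as a rightmost factor in every summand, $\smilehnatural(\varpi)=0$ whenever $\hnatural(\varpi)=0$; since $h$ lowers the $\Lambda^\bullet B^\vee$-degree $q$ by one, $\smilehnatural$ annihilates every element of $\Cddd{p}{0}{r}\otimes\sections{\Lambda^s B}$. Second, the Gerstenhaber bracket on $\sections{\Lambda^\bullet\cF}$ has degree $-1$ with respect to the $\Lambda^\bullet B$-grading, so the bracket of two ``functions'' (elements of $\sections{\Lambda^0\cF}$) automatically vanishes. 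Combining this with Lemma~\ref{lemma: temp1}, one notes that $\smiletaunatural(b)\in\Cddd{0}{0}{\bullet}\otimes\sections{\Lambda^1 B}$ for $b\in\sections{B}$, and $\smiletaunatural(\theta)\in(\Cddd{1}{0}{\bullet}\oplus\Cddd{0}{1}{\bullet})\otimes\sections{\Lambda^0 B}$ for $\theta\in\sections{A^\vee}$.

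The three possibilities for $(y,z)$ are then dispatched as follows. If $y,z\in\sections{B}$, the Schouten bracket formula $[\Cddd{p}{q}{r}\otimes\sections{\Lambda^s B},\Cddd{a}{b}{c}\otimes\sections{\Lambda^d B}]\subset\Cddd{p+a}{q+b}{r+c-1}\otimes\sections{\Lambda^{s+d-1}B}$ forces $[\smiletaunatural(y),\smiletaunatural(z)]$ into $\Cddd{0}{0}{\bullet}\otimes\sections{\Lambda^1 B}$, which has $q=0$ and so is killed by $\smilehnatural$. If $y,z\in\sections{A^\vee}$, both images are functions and the bracket already vanishes. In the mixed case $y\in\sections{B}$, $z\in\sections{A^\vee}$ (and symmetrically for the swap), $[\smiletaunatural(y),\smiletaunatural(z)]$ lands in $(\Cddd{1}{0}{\bullet}\oplus\Cddd{0}{1}{\bullet})\otimes\sections{\Lambda^0 B}$; the $\Cddd{1}{0}{\bullet}$ component is killed by $\smilehnatural$ immediately, while for the $\Cddd{0}{1}{\bullet}$ component a short induction -- using that $\LieDer_\varrho$ carries $\Cddd{0}{0}{r}$ into $\Cddd{1}{0}{\geq r}\oplus\Cddd{0}{1}{\geq r}$ and that $\etendu{h}$ subsequently kills the $\Cddd{1}{0}{\bullet}$ output -- shows that $\smilehnatural$ sends it into $\Cddd{0}{0}{\bullet}\otimes\sections{\Lambda^0 B}$. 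Hence $\smilehnatural[\smiletaunatural(y),\smiletaunatural(z)]$ is a function, so the outer bracket with $\smiletaunatural(x)$ is either function-function (vanishing by the second observation when $x\in\sections{A^\vee}$) or lands in $\Cddd{0}{0}{\bullet}\otimes\sections{\Lambda^0 B}$ and is killed by $\smilehnatural$ by the first observation (when $x\in\sections{B}$). The only technical nuisance is the tri-grading bookkeeping through the geometric series defining $\smilehnatural$ in the mixed case, but this is entirely mechanical.
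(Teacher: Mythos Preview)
Your proposal is correct and follows essentially the same approach the paper indicates (it merely says the lemma ``is proved along the same lines'' as the preceding ones): namely, tri-grading bookkeeping in $\Cddd{p}{q}{r}\otimes\sections{\Lambda^s B}$ combined with a case analysis on whether each argument lies in $\sections{B}$ or $\sections{A^\vee}$. Your two structural observations---that $\smilehnatural$ annihilates anything with $q=0$ because $\hnatural$ is the rightmost factor in every summand, and that the Schouten bracket of two elements of $\Lambda^0 B$-weight vanishes---are exactly the right tools, and your inductive tracking of the $\Cddd{0}{1}{\bullet}$ component through the geometric series is the only nontrivial step, which you have handled correctly.
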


With these preparatory work, we finally give the proof of our main result --- Proposition \ref{Prop: FedosovSameGalgebra}.
\begin{proof}[Proof of Proposition \ref{Prop: FedosovSameGalgebra}]

Via the contraction data $(\smilehnatural,\sigmanatural,\smiletaunatural)$ in
Equation \eqref{Eqt: sigmatausmailnaturalquasi},
one constructs the $L_\infty$ brackets on $\tot \OmegaAwedgeB$ (Theorem \ref{Bandiera: homotopytransfer}).
The first one is already shown to be $\dAB$.

According to the proof of Theorem \ref{Bandiera: homotopytransfer}, the binary bracket reads
\[ l_2(X,Y)=\sigmanatural[\smiletaunatural{(X)},\smiletaunatural{(Y)}],\quad\forall
X,Y\in \tot \OmegaAwedgeB .\]
Compare the identities in Equation \eqref{Eqt: contractedbrackets} with those in (2) of Proposition \ref{Prop: main1continue},
we see the generating relations of the binary bracket are exact the same.

The ternary bracket reads
\[ l_3(X,Y,Z)=\sigmanatural\bigl[\smiletaunatural(X),\smilehnatural[\smiletaunatural{(Y)},\smiletaunatural{(Z)}]\bigr] +c.p. \]
Let us examine the ternary bracket on generating elements. For $b_1,b_2\in\sections{B}$ and $\theta\in\sections{A^\vee}$, we have
\begin{eqnarray*}
&&l_3(b_1,b_2,\theta)\\
&=&\sigmanatural\Bigl(\bigl[\smiletaunatural(b_1),\smilehnatural[\smiletaunatural{(b_2)},\smiletaunatural{(\theta)}]
-\bigl[\smiletaunatural(b_2),\smilehnatural[\smiletaunatural{(b_1)},\smiletaunatural{(\theta)}]
+\bigl[\smiletaunatural(\theta),\smilehnatural[\smiletaunatural{(b_1)},\smiletaunatural{(b_2)}]
\bigr] \Bigr)\\
&=&\frac{1}{2}\pairing{\baBtoA{b_1}{b_2}}{\theta}-
\frac{1}{2}\pairing{\baBtoA{b_2}{b_1}}{\theta}(\mbox{ by Equation \eqref{Eqt: contracted3brackets}}) \\
&=& \pairing{\baBtoA{b_1}{b_2}}{\theta}.
\end{eqnarray*}
For the same reasons, one sees that $l_3 $ vanishes if restricted to $\sections{\moduleB}\times\sections{\moduleB}\times\sections{\moduleB}$,
$\sections{\moduleB}\times\sections{\LADAs}\times\sections{\LADAs}$ and $\sections{\LADAs}\times\sections{\LADAs}\times\sections{\LADAs}$.
So, the generating relations of the ternary bracket are exact the same as those in (3) of Proposition \ref{Prop: main1continue}.

The $4$th-bracket reads
\begin{eqnarray*}
&&l_4(X,Y,Z,W)\\
&=&\lambda\sigmanatural\bigl[\smilehnatural[\smiletaunatural{(X)},\smiletaunatural{(Y)}],\smilehnatural[\smiletaunatural{(Z)},\smiletaunatural{(W)}]\bigr] +c.p.\\
&&\qquad+\mu\sigmanatural\bigl[\smiletaunatural(X),\smilehnatural\bigl[\smiletaunatural(Y),\smilehnatural[\smiletaunatural{(Z)},\smiletaunatural{(W)}]\bigr] \bigr] +c.p.
\end{eqnarray*}
where $\lambda,\mu$ are two constants.
By Equations \eqref{Eqt: temp3} and \eqref{Eqt: contracted4brackets2},
we see that $l_4$ vanishes if restricted to generating elements in $\sections{B}$ or $\sections{A^\vee}$.
Hence $l_4$ is trivial.
It can be similarly verified that all higher brackets $l_j$ ($j\geq 5$) are trivial.

This completes the proof.
\end{proof}

\subsection{Third construction: Dirac deformation and proof of main theorems}
\label{melon}

Deformation of Dirac structures has been studied at least 15 years ago by
Severa \cite{arXiv:1707.00265}
and Roytenberg \cite{MR1936572}.
It is well known that the deformation is controlled by an $L_\infty$ algebra, which is canonical up to $L_\infty$ isomorphisms.
In fact, it is a degree $(-1)$ derived Poisson algebra. Let us recall the construction below.

Let $E$ be a Courant algebroid of signature $(n,n)$ over a smooth manifold $M$, and $D\subset E$ a Dirac structure.
Choose a transversal almost Dirac (i.e.\ maximal isotropic) subbundle $C\subset E$ such that $E\cong D\oplus C$.
Identify $C$ with $D^\vee$. Then we have $E\cong D\oplus D^\vee$.

The Courant bracket $\lie{\argument}{\argument}_E$ on $\Gamma (E)$ and the anchor map $\rho_E: E\to T_M$ induce, by restrictions, a skew-symmetric bracket
$[\argument,\argument]_{D^\vee}$ on $\Gamma(D^\vee)$ and an anchor map $\rho_\vee : D^\vee\to T_M$,
\begin{eqnarray*}
&& [\xi,\eta]_{D^\vee}=\pr_{D^\vee}\lie{\xi}{\eta}_E,\quad\forall\xi,\eta\in\Gamma(D^\vee) ,\\
&& \rho_\vee=\rho_E|_{D^\vee}
.\end{eqnarray*}

Let $\phi\in\Gamma(\Lambda^3 D)$ be the section defined by
\[ \phi(\xi,\eta,\zeta)=2\pairing{[\xi,\eta]_E}{\zeta}_E=2\pairing{\pr_{D}[\xi,\eta]_E}{\zeta}_E,\quad\forall\xi,\eta,\zeta\in\Gamma(D^\vee) .\]

Here $\pairing{\argument}{\argument}_E$ is the symmetric metric on $E$.
It can be easily verified that $\phi$ is indeed skew-symmetric.

Unless $C\cong D^\vee$ is again a Dirac structure, in general $\phi$ is non-zero and $\big(D^\vee,[\argument,\argument]_{D^\vee},\rho_\vee\big)$ is not a Lie algebroid.
Instead, $(D,D^\vee)$ forms a quasi-Lie bialgebroid \cite{MR1936572}.

Consider the graded algebra $\Gamma(\Lambda^\bullet D^\vee)
=\oplus_{k=0}\Gamma(\wedge^k D^\vee)[-k]$,
whose degree $n$-part is $\Gamma(\Lambda^n D^\vee)$, for $n\geq 0$. Let \[ \lambda_1=d_D : \Gamma(\Lambda^i D^\vee)\to\Gamma(\Lambda^{i +1}D^\vee) \]
be the Chevalley--Eilenberg differential of the Lie algebroid $D$.

Define a binary bracket
\[ \lambda_2 : \Gamma(\Lambda^i D^\vee)\otimes\Gamma(\Lambda^l D^\vee)\to\Gamma(\Lambda^{i +l-1}D^\vee) \]
by extending, using Leibniz rule (see Equation \eqref{leibniz}, for $n=2$, $k=-1$), 
the relation
\[ \lambda_2 (\xi,\eta)=[\xi,\eta]_{D^\vee}, \quad \lambda_2(\xi, f)=\rho_\vee(\xi)(f), \quad\forall\xi,\eta\in\Gamma(D^\vee), \ f\in C^\infty(M) .\]

Similarly, let
\[ \lambda_3 : \Gamma(\Lambda^i D^\vee)\otimes\Gamma (\Lambda^l D^\vee)\otimes\Gamma(\Lambda^r D^\vee)\to\Gamma(\Lambda^{i +l+r-3} D^\vee) \]
be the ternary bracket extending $\phi$, by Leibniz rule, in each argument ($\lambda_3$ vanishes if one of the argument is a function on $M$).

The following result is due to Severa \cite{arXiv:1707.00265} and
Roytenberg \cite{MR1936572}. Relevant results appeared
in \cite{MR2275207} and more recently in \cite{arXiv:1702.08837}.

\begin{proposition}\label{pro: dirac}
Let $E$ be a Courant algebroid of signature $(n, n)$ over a smooth manifold $M$, and $D\subset E$ a Dirac structure.
Choose a transversal almost Dirac structure $C$. Then
\begin{itemize}
\item $\Gamma(\Lambda^\bullet D^\vee)$, together with $\lambda_1,\lambda_2,\lambda_3$ defined above and $\lambda_l=0, l>3$,
and the wedge product, is a degree $(-1)$ derived Poisson algebra.
\item The underlying $L_\infty$ algebra structure on $\Gamma (\Lambda^\bullet D^\vee)[1]$ controls deformations of the Dirac structures $D\subset E$
in the following sense: the graph $\{X+\omega^b(X)|X\in D\}\subset E$ of an element $\omega\in\Gamma(\Lambda^2 D^\vee)[1]$ is a Dirac structure
if and only if $\omega$ satisfies the Maurer--Cartan equation:
\[ \lambda_1(\omega)+\frac{1}{2}\lambda_2(\omega,\omega)+\frac{1}{6}\lambda_3(\omega,\omega,\omega)=0 .\]
\end{itemize}
\end{proposition}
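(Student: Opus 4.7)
My plan is to deduce the proposition from the Roytenberg--\v{S}evera correspondence between Courant algebroids of signature $(n,n)$ and symplectic N-manifolds of degree $2$ equipped with a degree-$3$ homological Hamiltonian. Under this correspondence, $E$ becomes $\mathcal{E} = T^*[2]D[1]$ with canonical Darboux coordinates $(x^i, \xi^a, \eta_a, p_i)$ of weights $(0,1,1,2)$, the canonical Poisson bracket $\{-,-\}$ of degree $-2$, and a Hamiltonian $\Theta$ of degree $3$ satisfying $\{\Theta,\Theta\}=0$. The Dirac structure $D$ corresponds to the submanifold $D[1] = \{\eta = p = 0\}$, and $\Gamma(\Lambda^\bullet D^\vee) \cong C^\infty(D[1])$ sits inside $C^\infty(\mathcal{E})$ as an abelian Poisson subalgebra.

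Next I would expand $\Theta$ according to polynomial degree in the $\eta$-coordinates. The $\eta$-free component vanishes precisely because $D$ is Dirac, as this forces $\{\Theta,-\}$ to preserve the ideal cutting out $D[1]$, so $\Theta = \Theta_1 + \Theta_2 + \Theta_3$ with $\Theta_k$ of $\eta$-degree $k$. Reading off the coefficients shows that $\Theta_1$ encodes the Lie algebroid structure $([-,-]_D,\rho_D)$ on $D$, $\Theta_2$ encodes the pair $([-,-]_{D^\vee},\rho_\vee)$, and $\Theta_3$ encodes the section $\phi \in \Gamma(\Lambda^3 D)$.

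I would then apply Voronov's higher derived bracket construction to obtain the $L_\infty$ structure on $\Gamma(\Lambda^\bullet D^\vee)$ via
\[ \lambda_n(a_1, \ldots, a_n) = P\bigl\{\cdots\{\{\Theta, a_1\}, a_2\}, \ldots, a_n\bigr\}, \]
where $P$ denotes the projection onto the $\eta,p$-free component. A weight count (each bracket $\{-,a\}$ decreases the combined $(\eta,p)$-weight by exactly $1$, and $\Theta$ has maximum combined $(\eta,p)$-weight $3$) shows that only $\lambda_1,\lambda_2,\lambda_3$ can be nonzero; evaluating on generators $\xi \in \Gamma(D^\vee)$ and $f \in C^\infty(M)$ reproduces the Chevalley--Eilenberg differential $d_D$, the extension of $([-,-]_{D^\vee},\rho_\vee)$ by Leibniz, and the extension of $\phi$ by Leibniz, respectively. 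Compatibility with the wedge product is automatic because each $\lambda_n$ is built from Poisson brackets, which are derivations, so the degree $(-1)$ derived Poisson axioms follow.

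For the deformation statement, a section $\omega \in \Gamma(\Lambda^2 D^\vee)$ is a degree-$2$ function $\tilde\omega$ on $D[1]$, and its graph in $\mathcal{E}$ is the Lagrangian submanifold $N_{\tilde\omega}$ obtained by shifting $D[1]$ along the Hamiltonian flow of $\tilde\omega$. This Lagrangian is a Dirac structure if and only if $\Theta$ restricts to zero on $N_{\tilde\omega}$, equivalently $P\bigl(e^{\{\tilde\omega,-\}}\Theta\bigr) = 0$; expanding the exponential and matching with the formula for the derived brackets yields exactly $\lambda_1(\omega) + \tfrac{1}{2}\lambda_2(\omega,\omega) + \tfrac{1}{6}\lambda_3(\omega,\omega,\omega) = 0$. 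The main obstacle will be bookkeeping the Koszul signs between Voronov's $L_\infty[1]$ conventions and the degree $(-1)$ derived Poisson convention fixed in Definition~\ref{def:derivedpoisson}; the structural argument is transparent but the sign gymnastics are notoriously delicate.
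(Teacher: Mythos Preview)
The paper does not give its own proof of this proposition; it simply attributes the result to \v{S}evera and Roytenberg and cites the relevant literature. Your sketch via the degree-$2$ symplectic dg manifold $T^*[2]D[1]$, the weight decomposition of the cubic Hamiltonian $\Theta$, and Voronov's higher derived brackets is exactly Roytenberg's argument, and it dovetails with the Voronov-data framework the paper itself invokes in Appendix~A; the deformation statement via the gauge transformation $e^{\{\tilde\omega,-\}}$ is likewise the standard one. The structural argument is correct, and your own caveat about the sign bookkeeping between the $L_\infty[1]$ and the degree $(-1)$ derived Poisson conventions is the only place requiring care.
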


In fact, such a degree $(-1)$ derived Poisson algebra structure on
$\Gamma (\Lambda^\bullet D^\vee)$ is canonical, up to isomorphisms.
Note that there is a one-one correspondence between almost Dirac structures transversal to $D$ and elements in $\Gamma(\Lambda^2 D)$.
Their relation is established as follows:
\[ \pi\in\sections{\Lambda^2 D} \quad\leftrightarrow\quad C_\pi=\set{\pi^\sharp(\xi)+\xi|\xi\in C\cong D^\vee} .\]

\begin{proposition}[\cite{Severa_private_communication}]
\label{Prop: cannonicalLinfyDiracDual}
Under the same hypothesis as in Proposition \ref{pro: dirac}, assume that $C_\pi$ is another almost Dirac structure transversal to $D$,
which corresponds to an element $\pi\in\Gamma(\Lambda^2 D)$.
Then the $(-1)$ derived Poisson algebra structures on $\Gamma(\Lambda^\bullet D^\vee)$ induced from $C$ and $C_\pi$ are isomorphic.
The isomorphism is given by $\exp{\delta_\pi}$, where $\delta_\pi$ is a coderivation on $\overline{S}(\Gamma(\Lambda^\bullet D^\vee)[2]$ generated by
\begin{eqnarray}\nonumber
&& S^2\big(\Gamma(\Lambda^\bullet D^\vee)\big)\to\Gamma(\Lambda^\bullet D^\vee) \\
&& \xi\otimes\eta\mapsto\contraction{\pi} (\xi)\wedge\eta+\xi\wedge \contraction{\pi}(\eta)-\contraction{\pi}(\xi\wedge\eta) \label{eq: deltapi}
.\end{eqnarray}
\end{proposition}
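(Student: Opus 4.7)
The plan is to exploit the deformation-theoretic interpretation afforded by Proposition~\ref{pro: dirac}: both transversal subbundles $C$ and $C_\pi$ produce $L_\infty$ structures on $\sections{\Lambda^\bullet D^\vee}[1]$ governing the same deformation problem (deformations of $D$ inside $E$), so some $L_\infty$-isomorphism between them must exist, and the task is to identify it explicitly as $\exp(\delta_\pi)$. I would first introduce the vector-bundle automorphism $\Phi_\pi: E\to E$ which is the identity on $D$ and carries $C$ onto $C_\pi$; explicitly, for $X\in D$ and $\xi\in C\cong D^\vee$, one sets $\Phi_\pi(X+\xi) = X+\pi^\sharp\xi+\xi$. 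One checks that $\Phi_\pi$ preserves the symmetric pairing of $E$ but distorts the Courant bracket, and this distortion is precisely what alters the derived Poisson structure.

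Next, I would unpack the new generators. The unary bracket $\lambda_1=d_D$ is intrinsic to the Lie algebroid $D$ and hence unchanged; however, $\lambda_2$ and the ternary bracket $\lambda_3$ (encoded by $\phi\in\sections{\Lambda^3 D}$) acquire $\pi$-corrections arising from the fact that a section $\xi\in D^\vee$ embedded into $E$ via $C_\pi$ differs from its $C$-embedding by the $D$-term $\pi^\sharp\xi$. The formula in \eqref{eq: deltapi} is recognized as the Koszul-type bracket measuring the failure of the contraction operator $\contraction{\pi}$ to be a derivation of the wedge product; since $\pi$ is a bivector, $\contraction{\pi}$ is a second-order operator, so this failure is a well-defined binary operation of degree $(-1)$. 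The associated coderivation $\delta_\pi$ is locally nilpotent on $\overline{S}(\sections{\Lambda^\bullet D^\vee}[2])$, so $\exp(\delta_\pi)$ is a well-defined coalgebra automorphism, and the content of the proposition becomes the identity
\[ Q_{C_\pi}=\exp(-\delta_\pi)\circ Q_C\circ\exp(\delta_\pi), \]
where $Q_C$ and $Q_{C_\pi}$ are the codifferentials on $\overline{S}(\sections{\Lambda^\bullet D^\vee}[2])$ encoding the two derived Poisson structures.

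The cleanest route to this identity, which will be the main technical step, is via the Maurer--Cartan correspondence. Given $\omega\in\sections{\Lambda^2 D^\vee}$, its graph $\{X+\omega^b(X)\}\subset E$ is Dirac iff $\omega$ is a Maurer--Cartan element for the $C$-picture $L_\infty$ structure. In the $C_\pi$-picture the \emph{same} Dirac subbundle is represented by a shifted element $\omega_\pi$, and expanding the relation $\omega\mapsto\omega_\pi$ in powers of $\pi$ identifies this gauge transformation with the action of $\exp(\delta_\pi)$ on Maurer--Cartan elements; by the rigidity of $L_\infty$ algebras under Goldman--Millson equivalence, an $L_\infty$-morphism is determined up to homotopy by its induced functor on MC elements valued in nilpotent test algebras, so matching at the MC level forces the codifferential identity above. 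The main obstacle I anticipate is verifying compatibility with the wedge product, that is, checking that $\exp(\delta_\pi)$ is an isomorphism of degree $(-1)$ derived Poisson algebras in the sense of Definition~\ref{def: morphisms}, and not merely of the underlying $L_\infty$ algebras. This follows because $\delta_\pi$ is built from the second-order differential operator $\contraction{\pi}$ on the graded algebra $\sections{\Lambda^\bullet D^\vee}$ via a derived-bracket construction; a direct inductive check on the Taylor coefficients of $\exp(\delta_\pi)$ then yields the Leibniz-type relation \eqref{eqn: derived morphism}.
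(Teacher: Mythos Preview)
The paper does not actually prove the $L_\infty$-isomorphism part of this proposition: the statement is attributed to \v{S}evera via private communication, and the only argument supplied in the text is the paragraph immediately following the proposition, which observes that the bilinear map~\eqref{eq: deltapi} is a biderivation of the wedge product and then invokes Proposition~\ref{prop: morphisms} to conclude that $\exp(\delta_\pi)$ is a morphism of derived Poisson algebras (not merely of $L_\infty$ algebras). So for the algebra-compatibility part your plan coincides with the paper's: your ``direct inductive check on the Taylor coefficients of $\exp(\delta_\pi)$'' is exactly the content of Proposition~\ref{prop: morphisms}, and you should simply cite it rather than redoing the induction.

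For the $L_\infty$-isomorphism itself there is nothing in the paper to compare against, but your proposed route has a gap worth flagging. You argue that matching the induced maps on Maurer--Cartan elements, together with Goldman--Millson rigidity, forces the codifferential identity $Q_{C_\pi}=\exp(-\delta_\pi)\,Q_C\,\exp(\delta_\pi)$. But the MC-functor determines an $L_\infty$-morphism only up to homotopy, whereas the proposition asserts a \emph{specific} isomorphism with a specific formula. Your MC argument therefore yields only existence of \emph{some} $L_\infty$-isomorphism, not that it equals $\exp(\delta_\pi)$ on the nose. To get the exact formula one would typically work with Roytenberg's symplectic realization: both $L_\infty$ structures arise as derived brackets from the same cubic Hamiltonian on $T^*[2]D[1]$ after pulling back by the canonical transformation generated by $\pi\in\Gamma(\Lambda^2 D)$, and computing that pullback directly produces $\exp(\delta_\pi)$. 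Alternatively, one can verify $Q_{C_\pi}=e^{-\delta_\pi}Q_C\,e^{\delta_\pi}$ by hand on generators (there are only $\lambda_1,\lambda_2,\lambda_3$ to track), which is a finite computation.
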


Note that the bilinear map in Equation \eqref{eq: deltapi} is a biderivation of the graded commutative algebra $\Gamma(\Lambda^\bullet D^\vee)$
with respect to the wedge product. Therefore $\exp{\delta_\pi}$ is indeed compatible with respect to the associative algebra structure,
according to Proposition \ref{prop: morphisms}.

Now consider a Lie pair $(L, A)$. Let $E=L\oplus L^\vee$ be the standard
Courant algebroid \cite{MR1472888},
where the Courant bracket and the anchor map are defined, respectively, by 
\begin{eqnarray*}
[X+\alpha , Y+\beta ]_E &=& [X, Y]_L+(\LieDerivative_X\beta -\contraction{Y} d_L \alpha) ,\\
\rho_E (X+\alpha) &=& \rho_L (X),
\end{eqnarray*}
$\forall X+\alpha,Y+\beta\in\sections{L\oplus L^\vee}$.
The symmetric pairing is:
\[ \pairing{X+\alpha}{Y+\beta}_E=\frac{1}{2}(\pairing{X}{\beta}+\pairing{Y}{\alpha}) .\]

It is a standard result that $D=A\oplus A^\perp$ is a Dirac structure
of $E$ \cite{MR1472888}.
Choose a splitting of the exact sequence \eqref{Eqt: exactseq} so that $L\cong A\oplus B$.
Then $A^{\perp}\cong B^\vee$ and $B\oplus A^\vee$ is a transversal almost Dirac structure of $D$ in $E$.
That is \[ E\cong (A\oplus B^\vee)\oplus (B\oplus A^\vee)\cong D\oplus D^\vee \]
with $D\cong A\oplus B^\vee$ and $D^\vee\cong B\oplus A^\vee$.
Thus $(D, D^\vee)$ is a quasi-Lie bialgebroid, and the induced structure maps on $D^\vee$ are, respectively, given by the following relations:
\begin{itemize}
\item[1)]
The bracket $[\argument,\argument]_{D^\vee}$ reads
\begin{eqnarray*}
[u+\theta,v+\omega]_{D^\vee} &=& \pr_{D^\vee}\bigl([u,v]_L+(\LieDerivative_u \omega-\contraction{v} d_L\theta)\bigr) \\
&=& \pr_{B}[u,v]_L+\pr_{A^\vee}(\LieDerivative_u \omega-\LieDerivative_v \theta),
\end{eqnarray*}
where $u+\theta,v+\omega\in\sections{D^\vee}=\sections{B\oplus A^\vee}$.
\item[2)]
The anchor $\rho_\vee$ is simply
\begin{eqnarray*}
&& \rho_\vee(u+\theta)=\rho_L (u)
.\end{eqnarray*}
\item[3)]
The $3$-form $\phi$ on $D^\vee$ is:
\begin{eqnarray*}
\phi (u_1+\theta_1, u_2+\theta_2, u_3+\theta_3) &=& 2\pairing{\pr_{D}[u_1+\theta_1,u_2+\theta_2]_E}{u_3+\theta_3}_E \\
&=& 2\pairing{\pr_{A}[u_1,u_2]_L+\pr_{B^\vee}(\LieDerivative_{u_1}\theta_2-\LieDerivative_{u_2}\theta_1)}{u_3+\theta_3}_E \\
&=& \pairing{\pr_{A}[{u_1 },{u_2}]_L}{\theta_3}+\pairing{\LieDerivative_{u_1}\theta_2}{u_3}-\pairing{\LieDerivative_{u_2}\theta_1}{u_3} \\
&=& \pairing{\pr_{A}[{u_1 },{u_2}]_L}{\theta_3}+\pairing{\pr_{A}[{u_3 },{u_1}]_L}{\theta_2}+\pairing{\pr_{A}[{u_2 },{u_3}]_L}{\theta_1}
,\end{eqnarray*}
$\forall u_i\in B,\theta_i\in A^\vee$, $i=1,2,3$.
\end{itemize}

The above generating relations determine a
degree $(-1)$ derived Poisson algebra structure on
\[ \Gamma(\Lambda^\bullet D^\vee)\cong\sections{\Lambda^\bullet(B\oplus A^\vee)}\cong
\oplus_{k=0, l=0}\Omega_A^k (\Lambda^l B)[-k-l] .\]
In order to be consistent with the degree convention in
Section \ref{Sec: resultofoutaliepair} and Section \ref{pineapple},
we now redesignate the degrees so that the
subspace $\Omega_A^k(\Lambda^l B)$ is of degree $(k-l)$.
In this way, we obtain the same $(+1)$ derived Poisson algebra
on $\tot \OmegaAwedgeB \Omega_A^k (\Lambda^l B)[-k+l]$
as in Proposition \ref{Prop: main1continue}, since
they have exactly the same generating relations.

In summary, we have proved the following

\begin{proposition}\label{pro: unique}
Let $(L, A)$ be a Lie pair, and $E=L\oplus L^\vee$ the standard Courant
algebroid associated to the Lie algebroid $L$ \cite{MR1472888}.
Consider the Dirac structure $D=A\oplus A^\perp\cong A\oplus B^\vee$. Then
\begin{enumerate}
\item A splitting of the exact sequence \eqref{Eqt: exactseq} determines an almost Dirac structure $B\oplus A^\vee$ transversal to $D$.
\item The degree $(+1)$ derived Poisson algebra $\tot \OmegaAwedgeB$ corresponding
to the transversal almost Dirac structure $B\oplus A^\vee$ as in
Proposition \ref{pro: dirac} coincides with the one as
in Proposition \ref{Prop: main1continue}.
\end{enumerate}
\end{proposition}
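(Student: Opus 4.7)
My plan is to verify the two claims in order, treating (1) as a quick bookkeeping check and (2) as a direct comparison of generating relations against Proposition~\ref{Prop: main1continue}. For (1), given a splitting $\embeddingi\rond \pA+\splitting \rond \qB=\id_L$ of \eqref{Eqt: exactseq}, I would identify $L\cong A\oplus B$, so that $L\dual\cong A\dual\oplus B\dual$ and $E=L\oplus L\dual\cong (A\oplus B\dual)\oplus(B\oplus A\dual)$. With respect to the symmetric pairing $\pairing{X+\alpha}{Y+\beta}_E=\tfrac12(\pairing{X}{\beta}+\pairing{Y}{\alpha})$, both summands are isotropic of rank equal to $\mathrm{rk}(L)$, and they pair non-degenerately, so $B\oplus A\dual$ is an almost Dirac complement to $D=A\oplus B\dual$. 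In particular $D\dual$ is canonically identified with $B\oplus A\dual$.

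For (2), I would apply Proposition~\ref{pro: dirac} to this pair $(D, D\dual=B\oplus A\dual)$ and read off the three pieces of data $(\lambda_1,\lambda_2,\lambda_3)$. The unary bracket $\lambda_1=d_D$ is the Chevalley--Eilenberg differential of the Lie algebroid $D=A\oplus B\dual$; using the identification $\sections{\Lambda^\bullet D\dual}\cong\OmegaAwedgeB$ and the fact that $B$ is naturally an $A$-module via the Bott connection, this differential is precisely $\dAB$. For the binary bracket, I would compute $\pr_{D\dual}\lie{u+\theta}{v+\omega}_E$ using the explicit Courant bracket $[X+\alpha,Y+\beta]_E=[X,Y]_L+\LieDer_X\beta-\contraction{Y}d_L\alpha$ and the splitting; this yields $\baB{u}{v}$ on $B\times B$, $\pr_{A\dual}\LieDer_u\omega$ on $B\times A\dual$, and $\anchorL(u)f$ on $B\times\cinf{M}$, while the $A\dual\times A\dual$ piece vanishes. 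For $\phi$, the calculation already displayed before the statement gives $\phi(u_1,u_2,\theta)=\pairing{\pr_A\baL{u_1}{u_2}}{\theta}$ and vanishing on the three remaining types of triples.

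To conclude, I would compare term by term with the generating relations (2a--d) and (3a,b) in Proposition~\ref{Prop: main1continue}: the binary and ternary brackets agree on generators, higher brackets are zero on both sides, and the multiplication is the wedge product in both cases. Since the degree $(+1)$ derived Poisson algebra structures are determined by their Taylor coefficients on the generators via the graded Leibniz rule (Definition~\ref{def:derivedpoisson}), this forces equality of the two structures.

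The only genuine subtlety — and what I expect to be the main obstacle — is the bookkeeping of the degree shift. Proposition~\ref{pro: dirac} produces a degree $(-1)$ derived Poisson algebra on $\sections{\Lambda^\bullet D\dual}=\bigoplus_{k+l}\OmegaAdouble{k}{l}[-k-l]$, whereas Proposition~\ref{Prop: main1continue} produces a degree $(+1)$ derived Poisson algebra on $\tot\OmegaAwedgeB=\bigoplus_{k,l}\OmegaAdouble{k}{l}[-k+l]$. I would handle this by explicitly regrading $\Lambda^l B\dual$ inside the first description as $\Lambda^l B[-l]$ (so that elements of $\OmegaAdouble{k}{l}$ acquire degree $k-l$ instead of $k+l$), and check that under this regrading the degree of each $\lambda_n$ shifts by the correct amount so that the same multilinear formulas define a degree $(+1)$ — rather than degree $(-1)$ — derived Poisson bracket. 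Once this identification is in place, matching the generating relations finishes the proof.
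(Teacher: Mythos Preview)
Your proposal is correct and follows essentially the same route as the paper: the paper's argument (given in the paragraphs immediately preceding the proposition, which it then summarizes) likewise identifies $D\dual\cong B\oplus A\dual$ from the splitting, computes $[\argument,\argument]_{D\dual}$, $\rho_\vee$, and $\phi$ directly from the Courant bracket formulas, matches these against the generating relations (2a--d) and (3a,b) of Proposition~\ref{Prop: main1continue}, and then handles the passage from the degree $(-1)$ to the degree $(+1)$ convention by regrading $\Omega_A^k(\Lambda^l B)$ to sit in degree $k-l$. Your flagged ``subtlety'' about the degree shift is exactly the point the paper addresses in the sentence beginning ``In order to be consistent with the degree convention\dots''.
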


\begin{proof}[Proof of Theorem \ref{thm: main1} and Theorem \ref{thm: main2}]

According to Proposition \ref{Prop: cannonicalLinfyDiracDual} and Proposition \ref{pro: unique},
the degree $(+1)$ derived Poisson algebra $\tot \OmegaAwedgeB$ induced from Proposition \ref{pro: dirac}, the one as in
Proposition \ref{Prop: main1continue}, and the one as in Proposition \ref{Prop: FedosovSameGalgebra} all coincide,
and is canonical up to derived Poisson algebra isomorphisms with the linear map being the identity.
As a consequence, the induced degree $(+1)$ Poisson algebra, or a Gerstenhaber algebra,
on the level of cohomology $\hypercohomology (\OmegaAwedgeB,\dAB)$, is indeed canonical.
\end{proof}

\subsection{Examples}

\subsubsection{Matched pairs of Lie algebroids}
Let $L$ be a Lie algebroid, $A$ and $B$ two Lie subalgebroids of $L$ such that $L\cong A\oplus B$ as vector bundles.
Then $L/A\cong B$ is naturally an $A$-module, while $L/B\cong A$ is naturally a $B$-module.
Then $(A,B)$ is said to form a matched pair. Alternatively, one can define a matched pair as follows.

\begin{definition}[\cite{MR1430434, MR1716681, MR1460632}]
Lie algebroids $A$ and $B$ over the same base manifold $M$
are said to form a matched pair if there exists an action $\nabla$ of $A$ on $B$ and an action $\anadelta$ of $B$ on $A$, such that the identities
\begin{gather*}
\lie{\anchor_A(X)}{\anchor_B(Y)} = -\anchor_A\big(\anadelta_Y X\big)+\anchor_B\big(\nabla_X Y\big) , \\
\nabla_X\lie{Y_1}{Y_2} = \lie{\nabla_X Y_1}{Y_2} +\lie{Y_1}{\nabla_X Y_2} +\nabla_{\anadelta_{Y_2} X}Y_1 -\nabla_{\anadelta_{Y_1} X} Y_2 , \\
\anadelta_Y\lie{X_1}{X_2} = \lie{\anadelta_Y X_1}{X_2} +\lie{X_1}{\anadelta_Y X_2} +\anadelta_{\nabla_{X_2} Y} X_1 -\anadelta_{\nabla_{X_1}Y}X_2
\end{gather*}
hold for all $X_1,X_2,X\in\sections{A}$ and $Y_1,Y_2,Y\in\sections{B}$.
Here $\anchor_A$ and $\anchor_B$ denote the anchor maps of $A$ and $B$ respectively.
\end{definition}

Given a matched pair $(A,B)$ of Lie algebroids, there is a Lie algebroid structure
on the direct sum vector bundle $L=A\oplus B$, with anchor \[ X\oplus Y\mapsto\rho_A(X)+\rho_B(Y) \] and the Lie bracket
\begin{equation*}
\lie{X_1\oplus Y_1}{X_2\oplus Y_2} = \big(\lie{X_1}{X_2} + \anadelta_{Y_1}X_2 - \anadelta_{Y_2}X_1 \big)
\oplus \big( \lie{Y_1}{Y_2} + \nabla_{X_1}Y_2 - \nabla_{X_2}Y_1 \big) .
\end{equation*}

Clearly, the pair $(L,A)$ is a Lie pair, and the Bott $A$-connection on $L/A\cong B$ coincides with $\nabla$.
Applying Proposition \ref{Prop: main1continue}, we see that
the ternary bracket vanishes,
and $\tot \Omega^{\bullet}_A (\Lambda^\bullet B)$ is in fact a dgla.
\begin{theorem}
For any given matched pair $(A,B)$ of Lie algebroids, $\tot \Omega^{\bullet}_A(\Lambda^\bullet B)$ admits
a canonical differential Gerstenhaber algebra structure\footnote{This means that the differential $d_A^\nabla$
is compatible with the Gerstenhaber bracket:
\[ d_A^\nabla \binarybracket{X}{Y} = \binarybracket{d_A^\nabla X}{Y} +\minuspower{\abs{X}+1}\binarybracket{X}{d_A^\nabla Y},
\qquad\forall X,Y\in\OmegaAwedgeB .\]},
where the multiplication is the wedge product, and the differential is the Chevalley--Eilenberg differential
\[ d_A^\nabla : \Omega^{\bullet}_A (\Lambda^\bullet B) \to\Omega^{\bullet +1}_A(\Lambda^\bullet B) .\]
Here the $A$-module structure on $\Lambda^\bullet B$ is the natural extension of the $A$-action on $B$.
\end{theorem}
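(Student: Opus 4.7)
The plan is to deduce the theorem directly from Proposition \ref{Prop: main1continue}, observing that in the matched pair case the ``defect'' term $\beta=\pr_A[\argument,\argument]_L\big|_{\Gamma(B)\otimes\Gamma(B)}$ vanishes identically.

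First I would point out that, given a matched pair $(A,B)$, the direct sum decomposition $L=A\oplus B$ provides a \emph{canonical} splitting of the short exact sequence $0\to A\to L\to B\to 0$, with $\pr_B:L\onto B$ the obvious projection. Applying Proposition \ref{Prop: main1continue} to this splitting produces a degree $(+1)$ derived Poisson algebra structure on $\tot \Omega^{\bullet}_A(\Lambda^\bullet B)$, with unary bracket $\dAB$ (for the Bott $A$-connection, which here coincides with the given $A$-action $\nabla$ on $B$), binary bracket generated by $\pr_B[u,v]_L$, $\pr_{A^\vee}(\LieDer_u\omega)$, $\rho_L(u)(f)$, and ternary bracket determined by $\trinarybracket{u}{v}{\omega}_3=\langle\pr_A[u,v]_L,\omega\rangle$ for $u,v\in\Gamma(B)$ and $\omega\in\Gamma(A^\vee)$, all higher brackets being zero.

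The crucial step is then to observe that, since $B$ is by hypothesis a Lie subalgebroid of $L$, the bracket $[u,v]_L=[u,v]_B$ lies entirely in $\Gamma(B)$ for all $u,v\in\Gamma(B)$. Consequently $\pr_A[u,v]_L=0$, which forces $\trinarybracket{u}{v}{\omega}_3=0$ on generators. Since the ternary bracket is determined by the Leibniz rule from this relation (together with the vanishing on the remaining three types of triples listed in Proposition \ref{Prop: main1continue}(3a)), we conclude that $\trinarybracket{\argument}{\argument}{\argument}_3\equiv 0$ on the whole of $\tot \Omega^{\bullet}_A(\Lambda^\bullet B)$. Combined with the vanishing of all $\lambda_l$ for $l\geq 4$, this means that the derived Poisson structure has only nontrivial unary and binary brackets.

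Finally, I would note that a degree $(+1)$ derived Poisson algebra whose only nontrivial $L_\infty$ brackets are $\lambda_1$ and $\lambda_2$ is precisely a differential Gerstenhaber algebra: the $L_\infty[-1]$ Jacobi identity for $\lambda_2$ becomes the strict graded Jacobi identity because the homotopy witness $\lambda_3$ vanishes, while the $d\lambda_2 = \lambda_2(d\argument,\argument)\pm\lambda_2(\argument,d\argument)$ compatibility is encoded in the quadratic $L_\infty$ relation between $\lambda_1$ and $\lambda_2$. Unwinding the generating relations of Proposition \ref{Prop: main1continue}(2) identifies $\lambda_2$ with the natural extension to $\Omega^\bullet_A(\Lambda^\bullet B)$ of the Lie bracket on $\Gamma(B)$ and the $B$-action $\Delta$ on $\Gamma(A^\vee)$, i.e.\ the standard Gerstenhaber bracket attached to the matched pair. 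There is essentially no hard step; the only point requiring care is bookkeeping of the Leibniz extension to check that the ternary bracket on decomposable tensors really collapses to zero once the generator-level identity $\pr_A[u,v]_L=0$ is invoked.
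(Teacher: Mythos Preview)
Your proposal is correct and follows exactly the approach the paper takes: the paper remarks just before the theorem that applying Proposition~\ref{Prop: main1continue} and noting that the ternary bracket vanishes (because $\pr_A[u,v]_L=0$ when $B$ is a Lie subalgebroid) yields a dgla structure on $\tot\Omega^\bullet_A(\Lambda^\bullet B)$. Your write-up simply fleshes out this one-line observation in more detail.
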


\begin{theorem}\label{Thm: matchedpairHG}
For any given matched pair $(A,B)$ of Lie algebroids, the Chevalley--Eilenberg
hypercohomology
$\mathbb{H}(\Omega^{\bullet}_A(\Lambda^\bullet B),d_A^\nabla)$
admits a canonical Gerstenhaber algebra structure.
\end{theorem}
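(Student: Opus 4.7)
The proof is essentially a corollary of results already in place, and the plan is simply to combine them. First I would observe that, given a matched pair $(A,B)$ of Lie algebroids, the direct sum $L = A\oplus B$ carries a canonical Lie algebroid structure for which $A$ is a Lie subalgebroid, so $(L,A)$ is a genuine Lie pair with $L/A \cong B$ and whose Bott $A$-connection on $L/A$ coincides with the given action $\nabla$ of $A$ on $B$. Hence both Theorem~\ref{thm: main1} and Theorem~\ref{thm: main2} apply verbatim to this particular Lie pair, and the hypercohomology appearing in the statement of Theorem~\ref{Thm: matchedpairHG} is literally the one appearing in Theorem~\ref{thm: main2}.

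Thus the first route is simply to specialize Theorem~\ref{thm: main2} to $(A\oplus B, A)$, which already yields the desired Gerstenhaber algebra structure on $\mathbb{H}\bigl(\Omega^{\bullet}_A(\Lambda^\bullet B), d_A^\nabla\bigr)$ without any further work. The second route, which is the one anticipated by the preceding theorem, is to note that in the matched pair setting the ternary bracket $\{\cdot,\cdot,\cdot\}_3$ of Proposition~\ref{Prop: main1continue} vanishes identically: indeed, its only nontrivial generating value is $\{u,v,\omega\}_3 = \langle \pr_A[u,v]_L,\omega\rangle$ for $u,v\in\Gamma(B)$ and $\omega\in\Gamma(A^\vee)$, and for a matched pair one has $[u,v]_L = [u,v]_B\in\Gamma(B)$, so $\pr_A[u,v]_L = 0$. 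Together with the vanishing of all higher brackets, this shows that the degree $(+1)$ derived Poisson structure on $\tot\Omega^{\bullet}_A(\Lambda^\bullet B)$ reduces to a strict differential Gerstenhaber algebra, which is precisely the previous theorem.

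Once one has a strict differential Gerstenhaber algebra at the cochain level, the induced structure on cohomology is immediate from the classical descent of a dg graded Lie bracket and a dg graded commutative product compatible by the Leibniz rule; this is the content of Proposition~\ref{Prop: kshiftedPinfinitygiveskshiftedPoisson} applied with all higher brackets equal to zero. Canonicity is inherited from the canonicity part of Theorem~\ref{thm: main1}: any two realizations of the degree $(+1)$ derived Poisson structure on $\tot\Omega^{\bullet}_A(\Lambda^\bullet B)$ are related by an isomorphism whose first Taylor coefficient is the identity, and such an isomorphism necessarily descends to the identity on cohomology at the level of the binary bracket and the wedge product.

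There is essentially no obstacle to be overcome; the only thing worth being careful about is keeping the two different degree conventions (the cohomological grading on $\Omega^k_A$ versus the shift by $-l$ on $\Lambda^l B$) consistent when specializing Theorem~\ref{thm: main2}, but that has already been done in the cited statement. The proof therefore reduces to a single invocation.
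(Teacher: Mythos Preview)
Your proposal is correct and matches the paper's own reasoning: the paper states this theorem immediately after observing (via Proposition~\ref{Prop: main1continue}) that the ternary bracket vanishes for a matched pair, so that $\tot\Omega^\bullet_A(\Lambda^\bullet B)$ is a strict differential Gerstenhaber algebra, whence the cohomology statement is a direct specialization of Theorem~\ref{thm: main2}. No separate proof is given in the paper, and your argument supplies exactly the intended justification.
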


As an application, consider a complex manifold $X$.
Set $A=T^{0,1}_X$ and $B=T^{1,0}_X$.
Then $(A,B)$ is a matched pair of Lie algebroids over $\CC$,
and its direct sum $A\bowtie B$ is isomorphic, as a Lie algebroid, to $T_X\otimes\CC$.
It is simple to see that $\tot \Omega^{\bullet}_A(\Lambda^\bullet B)=
\tot \Omega^{0,\bullet}_X(T^{\bullet,0}_X)$,
the differential $d_A^\nabla$ is the standard $\bar{\partial}$-operator,
and the Lie bracket $\lceil\argument,\argument\rceil$ is the Schouten bracket. Therefore,
$\tot (\Omega^{0,\bullet}_X(T^{\bullet,0}_X),\wedge,\lceil\argument,\argument\rceil,\bar{\partial})$ is a differential Gerstenhaber algebra.
The corresponding hypercohomology is isomorphic to the sheaf cohomology $\mathbb{H} (X,\Lambda^\bullet \Theta )$ of holomorphic polyvector fields.
The Gerstenhaber algebra structure in Theorem \ref{Thm: matchedpairHG} becomes the standard Gerstenhaber algebra structure on $\mathbb{H} (X,\Lambda^\bullet \Theta )$.

Another example of matched pairs arises from {$\frkg$-manifolds}.
Let $\frkg$ be a Lie algebra, $M$ a $\frkg$-manifold with infinitesimal
action $\phi: \frkg\to\XXa (M)$.
Let $A=M\rtimes\frkg$ be the associated action Lie algebroid and $B=T_M$.
Then $A$ and $B$ form a Lie algebroid matched pair, with mutual actions
\[ \nabla_{a}Y=[\phi(a),Y],\quad\Delta_Y a=0 ,\]
where $a\in\frkg$ is considered as a constant section in $A$, and $Y\in\XXa (M)$.
It is clear that $\tot \Omega^{\bullet}_A (\Lambda^\bullet B)\cong
\tot \Lambda^\bullet\frkg^\vee \otimes\XX^\bullet_{\poly}(M)$,
and $d_A^\nabla$ is the standard Chevalley--Eilenberg differential, and the Lie bracket $\lceil\argument,\argument\rceil$ is the
extension of the Schouten bracket.
Hence $\tot \Lambda^\bullet\frkg^\vee \otimes\XX^\bullet_{\poly}(M)$ is a
differential Gerstenhaber algebra and its hypercohomology
$\mathbb{H}_{\mathrm{CE}}(\frkg,\XX^\bullet_{\poly}{(M)})$ is a Gerstenhaber algebra.
See \cite[Lemma 3.1]{MR3650387}.

\subsubsection{Semisimple Lie algebras}
Let $\frkg$ be a complex semisimple Lie algebra, and $\frkh$ a Cartan subalgebra in $\frkg$.
Let $\frkg=\frkh \oplus \bigoplus_{\alpha\in \Delta} \mfg_\alpha$
be its root decomposition,
where $\Delta\subset \frkh^\vee $ is the root system of $\frkg$.
It is standard that there exist
$h_\alpha\in\frkh$ and $x_\alpha\in\mfg_{\alpha}$,
such that \[ [h_\alpha,x_\alpha]=2 x_\alpha,\qquad [h_\alpha,x_{-\alpha}]=-2 x_{-\alpha},\quad [x_{\alpha},x_{-\alpha}]=h_\alpha ,\]
for all $\alpha\in \Delta^+$.

Consider the Lie pair $(\frkg, \frkh)$.
Applying Theorems \ref{thm: main1} and \ref{thm: main2}, we have the following
\begin{theorem}
Let $\frkg$ be a complex semisimple Lie algebra
with the root decomposition $\frkg=\frkh\oplus\bigoplus_{\alpha\in \Delta}\mfg_\alpha$.
Then $\tot \Lambda^\bullet\frkh^\vee \otimes\Lambda^{\bullet} (\oplus_{\alpha \in \Delta}\frkg_\alpha)$ admits a canonical degree $(+1)$
derived Poisson algebra structure,
where the multiplication is the wedge product, and the
shifted $L_\infty$-brackets are given as follows:
\begin{itemize}
\item[(1)] The unary bracket is the Chevalley--Eilenberg differential
\[ d_\frkh: \Lambda^\bullet \frkh^\vee \otimes\Lambda^{\bullet} (\oplus_{\alpha \in \Delta}\frkg_\alpha)
\to\Lambda^{\bullet+1}\frkh^\vee \otimes\Lambda^{\bullet} (\oplus_{\alpha \in \Delta}\frkg_\alpha) .\]
Here the $\frkh$-module structure on $\Lambda^\bullet (\oplus_{\alpha \in \Delta}\frkg_\alpha)$ is the natural extension of the $\frkh$-action
on $\oplus_{\alpha \in \Delta}\frkg_\alpha$.
\item[(2)] The binary bracket $\binarybracket{\argument}{\argument}_2$
is generated by
\[ \binarybracket{x_\alpha}{x_\beta}_2=c_{\alpha,\beta}x_{\alpha+\beta}, \quad\forall\alpha,\beta\in\Delta,\alpha+\beta\in\Delta .\]
Here $c_{\alpha,\beta}$ is the standard structure constant (see \cite[Chapter VII]{MR499562}).
\item[(3)] The ternary bracket $\trinarybracket{\argument}{\argument}{\argument}_3$ is generated by
\[ \trinarybracket{x_\alpha}{x_{-\alpha}}{\xi}_3=\pairing{h_\alpha}{\xi}, \qquad\forall\xi\in\frkh^\vee ,\]
with all other situations being trivial.
\item[(4)] All higher brackets vanish.
\end{itemize}
\end{theorem}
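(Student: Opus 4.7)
The plan is to apply Theorem~\ref{thm: main1} together with the explicit formulas in Proposition~\ref{Prop: main1continue} to the Lie pair $(\frkg,\frkh)$ over the one-point base $M=\{*\}$, and then to evaluate the resulting brackets on the generators $x_\alpha$ and $\xi\in\frkh^\vee$. The root-space decomposition $\frkg=\frkh\oplus\bigoplus_{\alpha\in\Delta}\frkg_\alpha$ furnishes a canonical splitting $\splitting:B\to L$ of the exact sequence $0\to\frkh\to\frkg\to\frkg/\frkh\to 0$, where $B=\frkg/\frkh$ is identified with $\bigoplus_{\alpha\in\Delta}\frkg_\alpha$. Using this splitting, Proposition~\ref{Prop: main1continue} produces a degree $(+1)$ derived Poisson algebra structure on $\tot\OmegaAwedgeB\cong\tot\Lambda^\bullet\frkh^\vee\otimes\Lambda^\bullet(\oplus_{\alpha\in\Delta}\frkg_\alpha)$.

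First, the unary bracket is by construction the Chevalley--Eilenberg differential $d_\frkh$ associated with the $\frkh$-action on $\Lambda^\bullet(\oplus_\alpha\frkg_\alpha)$ coming from the Bott connection, which here is nothing but the natural adjoint $\frkh$-action extended multiplicatively; this gives item (1). Next, for the binary bracket, the generating relation (2a) of Proposition~\ref{Prop: main1continue} reads $[x_\alpha,x_\beta]_2=\pr_B[x_\alpha,x_\beta]_\frkg$. If $\alpha+\beta\in\Delta$, then $[x_\alpha,x_\beta]_\frkg=c_{\alpha,\beta}x_{\alpha+\beta}\in\frkg_{\alpha+\beta}\subset B$, and the projection equals itself; if $\beta=-\alpha$, then $[x_\alpha,x_{-\alpha}]_\frkg=h_\alpha\in\frkh$, whose $B$-component vanishes; otherwise $[x_\alpha,x_\beta]_\frkg=0$. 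For the mixed case (2b), $[x_\alpha,\xi]_2=\pr_{\frkh^\vee}\LieDer_{x_\alpha}\xi$; but for $y\in\frkh$ one has $[x_\alpha,y]\in\frkg_\alpha$, on which $\xi\in\frkh^\vee$ vanishes, so $(\LieDer_{x_\alpha}\xi)|_\frkh=0$ and this bracket is zero. Finally, (2c) is vacuous since $\cinf{M}=\CC$ and (2d) is trivially zero. This yields (2).

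Third, the ternary bracket is determined by (3b) of Proposition~\ref{Prop: main1continue}, namely $\trinarybracket{x_\alpha}{x_\beta}{\xi}_3=\pairing{\pr_\frkh[x_\alpha,x_\beta]_\frkg}{\xi}$. Since $[x_\alpha,x_\beta]_\frkg$ has a nonzero $\frkh$-component only when $\beta=-\alpha$, in which case $\pr_\frkh[x_\alpha,x_{-\alpha}]_\frkg=h_\alpha$, we obtain $\trinarybracket{x_\alpha}{x_{-\alpha}}{\xi}_3=\pairing{h_\alpha}{\xi}$ and all other restrictions to the generating subspaces in (3a) vanish. This gives (3), and (4) is automatic from Proposition~\ref{Prop: main1continue}. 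The canonicity assertion follows from the uniqueness in Theorem~\ref{thm: main1}: although the construction uses the root-space splitting, any other splitting produces an isomorphic derived Poisson algebra whose first Taylor coefficient is the identity.

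The only mildly delicate point, and the one place where a careful check is required, is the verification that the Lie derivative $\LieDer_{x_\alpha}\xi$ indeed has vanishing $\frkh^\vee$-component; this reduces, as above, to the elementary fact that $[\frkh,\frkg_\alpha]\subset\frkg_\alpha$ is disjoint from $\frkh$. Everything else is a direct transcription of Proposition~\ref{Prop: main1continue} to the setting of a reductive splitting.
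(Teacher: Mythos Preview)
Your proposal is correct and follows exactly the approach the paper takes: the paper simply states that the theorem follows by ``applying Theorems~\ref{thm: main1} and~\ref{thm: main2}'' to the Lie pair $(\frkg,\frkh)$, without spelling out the verification of the generating relations. Your write-up supplies precisely those details---specializing each clause of Proposition~\ref{Prop: main1continue} to the root-space splitting---and the computations (including the check that $\pr_{\frkh^\vee}(\LieDer_{x_\alpha}\xi)=0$) are all correct.
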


\begin{theorem}Let $\frkg =\frkh \oplus \bigoplus_{\alpha\in \Delta} \mfg_\alpha$ be a complex semisimple Lie algebra,
the Chevalley--Eilenberg hypercohomology
$\mathbb{H}( \Lambda^\bullet \frkh^\vee \otimes\Lambda^{\bullet}(\oplus_{\alpha \in \Delta}\frkg_\alpha), d_\frkh)$
admits a canonical Gerstenhaber algebra structure.
\end{theorem}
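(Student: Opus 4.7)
The plan is to obtain this theorem as a direct corollary of Theorem~\ref{thm: main2} applied to the Lie pair $(\frkg,\frkh)$. Recall that, as pointed out in the introduction, a Lie pair over the one-point manifold $M=\{*\}$ is simply an inclusion of Lie algebras. Hence, with $L=\frkg$ and $A=\frkh$, the pair $(\frkg,\frkh)$ is a $\CC$-Lie pair in the sense of the paper, and every result established in Sections~\ref{dandelion}--\ref{melon} specializes to this setting. In particular, the quotient $L/A=\frkg/\frkh$ is canonically identified with $\bigoplus_{\alpha\in\Delta}\frkg_\alpha$ via the root space decomposition, and the Bott $\frkh$-connection on $L/A$ reduces, under this identification, to the natural $\frkh$-action on $\bigoplus_{\alpha\in\Delta}\frkg_\alpha$ coming from the ad-action in $\frkg$.

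First, I would specialize the Chevalley--Eilenberg complex $\big(\Omega^{\bullet}_A(\Lambda^\bullet(L/A)),d_A^{\Bott}\big)$ to this case: it becomes exactly the complex $\big(\Lambda^\bullet\frkh^\vee\otimes\Lambda^{\bullet}(\oplus_{\alpha\in\Delta}\frkg_\alpha),d_\frkh\big)$ appearing in the statement, and its hypercohomology coincides with the usual Chevalley--Eilenberg cohomology since $M$ is a point. Next, Theorem~\ref{thm: main1} endows $\tot\Omega^\bullet_\frkh(\Lambda^\bullet(\oplus_{\alpha\in\Delta}\frkg_\alpha))$ with a canonical degree $(+1)$ derived Poisson algebra structure whose unary bracket is $d_\frkh$; the explicit formulas for the binary and ternary brackets in the preceding theorem are precisely the specialization of the generating relations of Proposition~\ref{Prop: main1continue} to the Lie pair $(\frkg,\frkh)$.

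At this point the statement follows by invoking Theorem~\ref{thm: main2}, whose proof (given at the end of Section~\ref{melon}) consists in applying Proposition~\ref{Prop: kshiftedPinfinitygiveskshiftedPoisson}: the binary bracket $\lambda_2$ of any degree $(+1)$ derived Poisson algebra descends to a genuine degree $(+1)$ Poisson (i.e.\ Gerstenhaber) bracket on the cohomology of the unary bracket $\lambda_1$, compatible with the induced commutative product. Since the derived Poisson structure on $\tot\Omega^\bullet_\frkh(\Lambda^\bullet(\oplus_{\alpha\in\Delta}\frkg_\alpha))$ is canonical up to isomorphism with identity first Taylor coefficient (by the uniqueness part of Theorem~\ref{thm: main1}), the induced Gerstenhaber algebra structure on $\mathbb{H}(\Lambda^\bullet\frkh^\vee\otimes\Lambda^\bullet(\oplus_{\alpha\in\Delta}\frkg_\alpha),d_\frkh)$ is canonical.

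There is no real obstacle here: the entire content of the result is that $(\frkg,\frkh)$ is a genuine instance of a Lie pair, a fact that is immediate from the definition, so that the heavy lifting has already been done in the general theorems. The only verification that one might wish to carry out explicitly is the identification of the specialized structure maps on $\Lambda^\bullet\frkh^\vee\otimes\Lambda^\bullet(\oplus_{\alpha\in\Delta}\frkg_\alpha)$ with those appearing in the previous theorem, which amounts to unwinding the definitions of $\pr_A[u,v]_L$, $\pr_B[u,v]_L$, and $\LieDerivative_u\omega$ on the root vectors $x_\alpha$ and on $\frkh^\vee$ — a routine computation using the relations $[x_\alpha,x_{-\alpha}]=h_\alpha$ and $[x_\alpha,x_\beta]=c_{\alpha,\beta}x_{\alpha+\beta}$.
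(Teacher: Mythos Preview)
Your proposal is correct and matches the paper's own approach exactly: the paper simply states that this theorem follows by applying Theorems~\ref{thm: main1} and~\ref{thm: main2} to the Lie pair $(\frkg,\frkh)$, with no further argument given. Your elaboration on the identifications (of $L/A$ with $\bigoplus_{\alpha}\frkg_\alpha$, of the Bott connection with the adjoint action, and of the Chevalley--Eilenberg complex) is accurate and more detailed than what the paper provides.
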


\subsubsection{Transitive Lie algebroids}
Let $(\LADL,\baL{\argument}{\argument},\anchorL)$ be a transitive Lie algebroid over $M$, i.e.\ $\anchorL : \LADL\to T_M$ is surjective.
Let $\mfk=\ker\anchorL$ be the adjoint bundle of $\LADL$, which is a Lie algebra bundle over $M$.
Then $(L,\mfk)$ is a Lie pair. The quotient $\LADL/\mfk \cong T_M$ has the trivial $\mfk $-module structure.

Following Mackenzie \cite{MR2157566},
a connection is a bundle map $\gamma: T_M\to\LADL$ such that $\anchorL\circ\gamma=\id_{T_M}$.
A connection always exists.
The curvature of $\gamma$ is the bundle map $R : \wedge^2 T_M \to\mfk$:
\[ R(X,Y)=\baL{\gamma(X)}{\gamma(Y)}-\gamma[X,Y], \qquad\forall X,Y\in\XXa (M) .\]

There also induces a $T_M$-connection on $\mfk $ defined by
\[ \nabla_X u=\baL{\gamma(X)}{u}, \qquad\forall X\in\XXa (M),
u\in\sections{\mfk } .\]

By choosing such a connection, $L$ can be identified with $\mfk \oplus T_M$ as
a vector bundle over $M$.
The Lie algebroid structure on $\LADL$ can be described as follows:
the anchor is the projection to the second component,
and the Lie bracket is
\[ \baL{(u,X)}{(v,Y)}=( R(X,Y)+[u,v]_{\mfk }+\nabla_Xv-\nabla_Yu,[X,Y]), \quad\forall u,v\in\sections{\mfk }, X,Y\in\XX(M) .\]

Now for the Lie pair $(\LADL,\mfk )$, by Proposition \ref{Prop: main1continue}, Theorems \ref{thm: main1} and \ref{thm: main2}, we have
\begin{theorem}\label{Thm: transitivemain}
Let $L$ be a transitive Lie algebroid over $M$ with anchor $\anchorL$, and $\mfk =\ker\anchorL$.
Then, up to degree $(+1)$ derived Poisson algebra isomorphisms whose first
Taylor coefficient is the identity map,
$\sections{\Lambda^{\bullet}\mfk ^\vee\otimes\XX^\bullet_{\poly}(M)}$
admits a unique degree $(+1)$ derived Poisson algebra structure,
where the multiplication is the wedge product, and shifted $L_\infty$ brackets
are given as follows:
\begin{itemize}
\item[(1)] The unary bracket
\[ d : \sections{\Lambda^{\bullet}\mfk ^\vee\otimes\XX^\bullet_{\poly}(M)}
\to\sections{\Lambda^{\bullet+1}\mfk ^\vee\otimes\XX^\bullet_{\poly}(M)} \]
is equal to $d_{\mfk }\otimes 1$, where $ d_{\mfk }: \sections{\Lambda^{\bullet}\mfk ^\vee}\to \sections{\Lambda^{\bullet+1}\mfk ^\vee}$ is
the Chevalley--Eilenberg differential of $\mfk $.
\item[(2)] The binary bracket is generated by the following relations:
\begin{eqnarray*}
&& \binarybracket{X}{ Y}_2=[X,Y] ,\\
&& \binarybracket{X}{\omega}_2=\nabla_X\omega ,\\
&& \binarybracket{\omega}{\eta}_2=0
,\end{eqnarray*}
$\forall X,Y\in\XXa (M),\omega,\eta\in\sections{\mfk ^\vee}$.
\item[(3)] The ternary bracket is generated by the following relations:
\[ \trinarybracket{X}{Y}{\omega}_3=\pairing{R^\gamma(X,Y)}{\omega}, \quad\forall X,Y\in\XX(M),\omega\in\sections{\mfk ^\vee} ,\]
and otherwise vanishes.
\item[(4)] All the higher brackets vanish.
\end{itemize}
\end{theorem}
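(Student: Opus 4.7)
The plan is to apply Theorem~\ref{thm: main1} and Proposition~\ref{Prop: main1continue} directly to the Lie pair $(L,\mfk)$, where $\mfk=\ker\anchorL$. First I would verify that this is indeed a Lie pair with quotient $B=L/\mfk\cong T_M$ (the anchor $\anchorL$ being surjective descends to an isomorphism). A key simplification is that the Bott $\mfk$-connection on $L/\mfk\cong T_M$ is trivial: for $u\in\sections{\mfk}$ and a lift $l\in\sections{L}$ of $X\in\XXa(M)$, one has $\anchorL([u,l]_L)=[\anchorL(u),\anchorL(l)]=[0,X]=0$, so $\nabla^{\Bott}_uX=0$. Consequently the Chevalley--Eilenberg differential $\dAB$ on $\Omega^\bullet_\mfk(\Lambda^\bullet T_M)$ reduces to $d_\mfk\otimes\id$, and one has the identification $\tot\Omega^\bullet_\mfk(\Lambda^\bullet T_M)\cong\sections{\Lambda^\bullet\mfk^\vee\otimes\XXX^\bullet_{\poly}(M)}$.

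Next I would fix a connection $\gamma: T_M\to L$, which provides a splitting $L\cong\mfk\oplus T_M$ of the short exact sequence $0\to\mfk\to L\to T_M\to 0$. Applying Proposition~\ref{Prop: main1continue} to this splitting produces a degree $(+1)$ derived Poisson algebra structure on $\sections{\Lambda^\bullet\mfk^\vee\otimes\XXX^\bullet_{\poly}(M)}$, and Theorem~\ref{thm: main1} guarantees that the resulting structure is unique up to derived Poisson isomorphism with identity first Taylor coefficient.

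It then remains to match the generating relations from Proposition~\ref{Prop: main1continue} against those displayed in the theorem. For the binary bracket, the relations on $\sections{T_M}\times\sections{T_M}$, $\sections{T_M}\times\sections{\mfk^\vee}$ and $\sections{T_M}\times\cinf{M}$ are read off from $\baL{\gamma(X)}{\gamma(Y)}=R^\gamma(X,Y)+\gamma[X,Y]$ after projection to $T_M$, from the $\mfk^\vee$-component of the Lie derivative $\LieDer_{\gamma(X)}\omega$ (which is precisely the dual connection $\nabla_X\omega$), and from $\anchorL(\gamma(X))f=Xf$ respectively; vanishing on $\sections{\mfk^\vee}\times\sections{\mfk^\vee}$ is immediate. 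For the ternary bracket, the only nontrivial generating relation is $\trinarybracket{X}{Y}{\omega}_3=\pairing{\pr_{\mfk}\baL{\gamma(X)}{\gamma(Y)}}{\omega}=\pairing{R^\gamma(X,Y)}{\omega}$, which is exactly the curvature. All higher brackets vanish because the corresponding formulas in Proposition~\ref{Prop: main1continue} are empty.

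The steps above are essentially routine identifications, and the only point requiring minor care is the sign/normalisation convention in translating the abstract maps $\baB{\argument}{\argument}$, $\baBtoA{\argument}{\argument}$, $\baABtoB{\argument}{\argument}$, $\baBAtoA{\argument}{\argument}$ of Section~\ref{Sec: resultofoutaliepair} into the geometric data $([\argument,\argument],\nabla,R^\gamma)$ of the transitive setting; this is the principal bookkeeping obstacle but involves no new ideas beyond those already deployed in Proposition~\ref{Prop: main1continue}.
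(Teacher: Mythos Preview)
Your proposal is correct and follows exactly the approach of the paper: the theorem is obtained by specializing Proposition~\ref{Prop: main1continue} and Theorem~\ref{thm: main1} to the Lie pair $(L,\mfk)$ with splitting given by the connection $\gamma$, and then identifying the structure maps $\baB{\argument}{\argument}$, $\baBAtoA{\argument}{\argument}$, $\baBtoA{\argument}{\argument}$ with the Lie bracket on $T_M$, the dual connection $\nabla$ on $\mfk^\vee$, and the curvature $R^\gamma$, respectively. Your explicit verification that the Bott $\mfk$-connection on $T_M$ is trivial (so that $\dAB=d_\mfk\otimes 1$) is precisely the key observation the paper records just before stating the theorem.
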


\begin{theorem}
Under the same hypothesis as in Theorem \ref{Thm: transitivemain},
the space
$\hypercohomology_{\mathrm{CE}}\big(\mfk , \XX^\bullet_{\poly}(M) \big)$ 
admits a canonical Gerstenhaber algebra structure.
\end{theorem}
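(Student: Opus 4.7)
The plan is to obtain this result as a direct application of Theorem \ref{thm: main2} to the Lie pair $(L,\mfk)$. Since $\mfk=\ker\anchorL$, the anchor induces a bundle isomorphism $L/\mfk\xrightarrow{\sim} T_M$, and the Bott $\mfk$-connection on $L/\mfk\cong T_M$ is the trivial connection: for $u\in\sections{\mfk}$ and $X\in\sections{L}$, the class of $[u,X]_L$ in $L/\mfk$ depends only on $\anchorL(X)$ through the Lie derivative along the zero vector field $\anchorL(u)=0$. Consequently the natural extension of the Bott action to $\Lambda^\bullet(L/\mfk)$ is trivial, and one has
\[ \Omega^{\bullet}_{\mfk}\bigl(\Lambda^\bullet(L/\mfk)\bigr) \;\cong\; \sections{\Lambda^\bullet \mfk^\vee\otimes \XX^\bullet_{\poly}(M)}, \]
with Chevalley--Eilenberg differential $d_{\mfk}^{\Bott}$ corresponding to $d_{\mfk}\otimes 1$, cf.\ part (1) of Theorem \ref{Thm: transitivemain}.

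Passing to hypercohomology, the left-hand side of the above isomorphism computes $\hypercohomology\bigl(\Omega^{\bullet}_{\mfk}(\Lambda^\bullet(L/\mfk)),d_{\mfk}^{\Bott}\bigr)$, while the right-hand side computes, by definition, $\hypercohomology_{\mathrm{CE}}\bigl(\mfk,\XX^\bullet_{\poly}(M)\bigr)$. Invoking Theorem \ref{thm: main2} for the Lie pair $(L,\mfk)$ therefore endows $\hypercohomology_{\mathrm{CE}}\bigl(\mfk,\XX^\bullet_{\poly}(M)\bigr)$ with a canonical Gerstenhaber algebra structure. Explicitly, the cup product is the wedge product inherited from $\sections{\Lambda^\bullet\mfk^\vee\otimes\XX^\bullet_{\poly}(M)}$, while the degree $(-1)$ Lie bracket is induced by the binary bracket $\binarybracket{\argument}{\argument}_2$ of Theorem \ref{Thm: transitivemain} via Proposition \ref{Prop: kshiftedPinfinitygiveskshiftedPoisson}.

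The one point worth confirming is that the induced Gerstenhaber structure on cohomology is independent of the auxiliary choices (splitting of the sequence, connection $\gamma$) appearing in Theorem \ref{Thm: transitivemain}; this is ensured by the uniqueness statement of that theorem, which guarantees that any two such chain-level degree $(+1)$ derived Poisson structures are related by a derived Poisson isomorphism whose first Taylor coefficient is the identity. Such an isomorphism acts as the identity on associative algebras modulo exact terms, and transports the binary bracket to itself up to homotopies that vanish on cohomology; thus the induced bracket on $\hypercohomology_{\mathrm{CE}}\bigl(\mfk,\XX^\bullet_{\poly}(M)\bigr)$ is canonical. There is no genuine obstacle in this proof: the theorem is essentially a specialisation of Theorem \ref{thm: main2} together with the elementary identifications recorded above.
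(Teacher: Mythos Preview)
Your proposal is correct and follows essentially the same approach as the paper: the paper presents this theorem (without a written proof) as an immediate consequence of applying Theorem~\ref{thm: main2} to the Lie pair $(L,\mfk)$, after noting that $L/\mfk\cong T_M$ carries the trivial $\mfk$-module structure so that $\Omega^\bullet_{\mfk}(\Lambda^\bullet(L/\mfk))\cong\sections{\Lambda^\bullet\mfk^\vee\otimes\XX^\bullet_{\poly}(M)}$ with differential $d_{\mfk}\otimes 1$. Your additional paragraph on independence of choices makes explicit what the paper leaves implicit in the word ``canonical.''
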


\subsubsection{Foliations}
Consider a regular foliation $F$ on a smooth manifold $M$.
Then $(T_M,T_F)$ is a Lie pair, where $T_F$ is the
integrable distribution corresponding to $F$.
Let $N_F=T_M/T_F$ be the associated normal bundle.
Applying Theorems \ref{thm: main1} and \ref{thm: main2}, we have

\begin{theorem}\label{Thm: foliationmain}
Let $F$ be a regular foliation on $M$.
Then, up to degree $(+1)$ derived Poisson algebra isomorphisms whose first Taylor coefficient is the identity map,
$\tot \sections{\Lambda^{\bullet}T_F^\vee\otimes\Lambda^\bullet N_F}$ admits a
canonical degree $(+1)$ derived Poisson algebra structure,
where the multiplication is the wedge product,
and the shifted $L_\infty$ brackets are
given as follows:
\begin{itemize}
\item[(1)] The unary bracket is the leafwise de Rham differential
\[ d_{dR}^{F} : \sections{\Lambda^{\bullet} T_F^\vee\otimes\Lambda^\bullet N_F} \to\sections{\Lambda^{\bullet+1}T_F^\vee\otimes\Lambda^\bullet N_F} .\]
\item[(2)]Choose a
complementary subbundle to $T_F$ in $T_M$ so that $T_M\cong T_F\oplus N_F$.
The binary bracket
\[ \binarybracket{\argument}{\argument}_2 : \sections{\Lambda^{i} T_F^\vee\otimes\Lambda^j N_F} \times \sections{\Lambda^{p} T_F^\vee\otimes\Lambda^l N_F} \to \sections{\Lambda^{i+p} T_F^\vee\otimes\Lambda^{j+l-1} N_F} \]
is generated by the following relations:
\begin{itemize}
\item[a)] $ \binarybracket{u}{v}_2=\pr_{N_F} [{u},{v}]$,\ \ \ $\forall u,v\in\sections{N_F}$;
\item[b)] $\binarybracket{u}{\omega}_2 =\pr_{T_F^\vee}(\LieDerivative_{u}{\omega})$, \ \ \ $\forall u\in \sections{N_F}$, $\omega\in \sections{T_F^\vee}$;
\item[c)] $\binarybracket{u}{f}_2=u(f)$, \ \ \ $\forall u\in \sections{N_F}$, $ f\in \cinf{M}$;
\item[d)] $\binarybracket{\omega_1}{\omega_2}_2=0$,\ \ \ $\forall \omega_1,\omega_2\in\sections{\Lambda^\bullet T_F^\vee}$.
\end{itemize}
\item[(3)] The ternary bracket
\[ \trinarybracket{\argument}{\argument}{\argument}_3 :
\sections{\Lambda^{i} T_F^\vee\otimes\Lambda^j N_F} \times \sections{\Lambda^{p} T_F^\vee\otimes\Lambda^l N_F} \times \sections{\Lambda^{r} T_F^\vee\otimes\Lambda^s N_F} \to\sections{\Lambda^{i+p+r-1} T_F^\vee\otimes\Lambda^{j+l+s-2} N_F} \]
is $\cinf{M}$-linear in each entry and generated by the following relations:
\begin{itemize}
\item[a)] $\trinarybracket{\argument}{\argument}{\argument}_3$
vanishes when being restricted to $\sections{N_F} \times \sections{N_F} \times \sections{N_F}$,
$\sections{N_F} \times \sections{T_F^\vee} \times \sections{T_F^\vee}$,
and $\sections{T_F^\vee} \times \sections{T_F^\vee} \times \sections{T_F^\vee}$;
\item[b)]
$\trinarybracket{u}{v}{\omega}_3=\pairing{\pr_{T_F}[{u},{v}] }{\omega}$, for all $u,v\in \sections{N_F}$ and $\omega\in \sections{T_F^\vee}$.
\end{itemize}
\item[(4)] All the rest of higher brackets vanish.
\end{itemize}
\end{theorem}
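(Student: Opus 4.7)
The plan is to apply Theorem \ref{thm: main1} and Proposition \ref{Prop: main1continue} directly to the Lie pair $(L,A) = (T_M, T_F)$, where $T_F \subset T_M$ is the integrable subbundle corresponding to the regular foliation $F$. Under these identifications, the quotient $B = L/A = T_M/T_F$ is precisely the normal bundle $N_F$, and the Bott $A$-connection on $B$ becomes the standard Bott connection on $N_F$ in the sense of \cite{MR0362335}. Since $(T_M,T_F)$ is a Lie pair, Theorem \ref{thm: main1} immediately yields the existence and the uniqueness (up to an isomorphism of degree $(+1)$ derived Poisson algebras with identity as first Taylor coefficient) of the degree $(+1)$ derived Poisson algebra structure on $\tot \Omega^{\bullet}_{T_F}(\Lambda^\bullet N_F) = \tot \sections{\Lambda^{\bullet} T_F^\vee \otimes \Lambda^\bullet N_F}$.

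Next, I would identify the Chevalley--Eilenberg differential $\dAB$ associated with the Lie algebroid $T_F$ acting on $\Lambda^\bullet N_F$ via the (extended) Bott connection with the leafwise de Rham differential $d_{dR}^F$, thereby confirming statement (1) of the theorem. This is a standard computation: on $\Omega^{\bullet}_{T_F}(M) = \sections{\Lambda^\bullet T_F^\vee}$ the Chevalley--Eilenberg differential of $T_F$ is tautologically the leafwise de Rham differential, and the extension to $\Lambda^\bullet N_F$ coefficients via the Bott connection reproduces the usual formula for $d_{dR}^F$ acting on foliated forms with values in a $T_F$-representation.

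To obtain the explicit generating relations (2)--(4), I would choose a splitting $T_M \cong T_F \oplus N_F$ of the short exact sequence $0 \to T_F \to T_M \to N_F \to 0$ and apply Proposition \ref{Prop: main1continue} verbatim. The generating relations (2a)--(2d) in Proposition \ref{Prop: main1continue} use the projection maps $\pr_B$ and $\pr_{A^\vee}$ which translate directly to $\pr_{N_F}$ and $\pr_{T_F^\vee}$; the anchor $\anchorL$ is simply the identity of $T_M$ in this case, so $\anchorL(u)f$ becomes $u(f)$. Similarly, the relation $\trinarybracket{u}{v}{\omega}_3 = \pairing{\pr_A [u,v]_L}{\omega}$ from Proposition \ref{Prop: main1continue}(3b) translates to $\trinarybracket{u}{v}{\omega}_3 = \pairing{\pr_{T_F}[u,v]}{\omega}$ for $u,v \in \sections{N_F}$ and $\omega \in \sections{T_F^\vee}$. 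The vanishing patterns in (3a) and (4) are inherited verbatim from Proposition \ref{Prop: main1continue}.

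The only subtle point — and arguably the ``main obstacle'' — is that the explicit formulas in (2) and (3) of the theorem depend on the auxiliary choice of splitting $T_M \cong T_F \oplus N_F$, whereas the uniqueness statement asserts canonicity only up to isomorphism. This is not a genuine issue: Theorem \ref{thm: main1} guarantees that any two choices of splitting yield degree $(+1)$ derived Poisson algebra structures that are isomorphic via an $L_\infty$ isomorphism with identity first Taylor coefficient, so the statement of Theorem \ref{Thm: foliationmain} should be read as describing a representative of this canonical isomorphism class, with the choice of splitting made explicit in items (2) and (3). With this understood, the proof reduces to the verifications outlined above and no further argument is required.
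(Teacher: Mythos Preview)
Your proposal is correct and matches the paper's approach exactly: the theorem is presented in the paper as a direct specialization of Theorem~\ref{thm: main1} and Proposition~\ref{Prop: main1continue} to the Lie pair $(T_M,T_F)$, with no additional argument beyond the dictionary translations you spell out.
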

\begin{theorem}\label{Thm: distributionGalgebra}
Let $F$ be a regular foliation on $M$.
The leafwise de Rham hypercohomology $\mathbb{H}_{\mathrm{dR}}(F, { \Lambda^\bullet N_F})$
admits a canonical Gerstenhaber algebra structure.
\end{theorem}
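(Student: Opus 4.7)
The plan is to derive Theorem~\ref{Thm: distributionGalgebra} as an immediate specialization of Theorem~\ref{thm: main2} to the Lie pair $(T_M, T_F)$ associated to the foliation $F$. First I would verify that the hypercohomology appearing in the statement is literally the one in Theorem~\ref{thm: main2}: with $L = T_M$ and $A = T_F$, the quotient $L/A$ is the normal bundle $N_F$, and the Chevalley--Eilenberg differential $d_A^{\Bott}$ on $\sections{\Lambda^\bullet T_F^\vee \otimes \Lambda^\bullet N_F}$ (using the natural extension of the Bott $A$-connection from $N_F$ to $\Lambda^\bullet N_F$) is by definition the leafwise de Rham differential with coefficients in the Bott-flat bundle $\Lambda^\bullet N_F$. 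Hence $\mathbb{H}(\OmegaAwedgeB, \dAB)$ is precisely $\mathbb{H}_{\mathrm{dR}}(F, \Lambda^\bullet N_F)$.

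Next, Theorem~\ref{Thm: foliationmain}, which is Theorem~\ref{thm: main1} specialized to the foliation setting, already supplies the degree $(+1)$ derived Poisson algebra structure on $\tot \sections{\Lambda^\bullet T_F^\vee \otimes \Lambda^\bullet N_F}$, with the wedge product as associative multiplication and $d_{\mathrm{dR}}^{F}$ as unary bracket. Applying Proposition~\ref{Prop: kshiftedPinfinitygiveskshiftedPoisson} with $k = +1$, the binary bracket $\lambda_2$ induces on the cohomology a degree $(+1)$ graded Poisson bracket. Combined with the graded commutative product induced by the wedge product, this is exactly a Gerstenhaber algebra structure on $\mathbb{H}_{\mathrm{dR}}(F, \Lambda^\bullet N_F)$.

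Finally, to establish that this Gerstenhaber algebra structure is canonical, I would invoke the uniqueness clause of Theorem~\ref{thm: main1}: any two degree $(+1)$ derived Poisson algebra structures on $\tot \sections{\Lambda^\bullet T_F^\vee \otimes \Lambda^\bullet N_F}$ produced by the constructions of this paper are related by a derived Poisson isomorphism whose first Taylor coefficient is the identity. Such an isomorphism acts as the identity on the unary cochain complex and intertwines the binary brackets up to chain homotopies. It therefore descends to the identity on hypercohomology, and so the induced binary bracket on $\mathbb{H}_{\mathrm{dR}}(F, \Lambda^\bullet N_F)$ is independent of all auxiliary choices used in Proposition~\ref{Prop: main1} (notably, of the splitting of $0 \to T_F \to T_M \to N_F \to 0$).

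The only step that requires any care, rather than a pure formal invocation of the earlier results, is the comparison between $d_A^{\Bott}$ and the leafwise de Rham differential with values in $\Lambda^\bullet N_F$; this is standard once one recalls that the Bott $A$-connection is flat and that its Chevalley--Eilenberg complex for a Lie algebroid coming from an integrable distribution reduces to the leafwise de Rham complex. I do not anticipate any genuine obstacle: the theorem is a corollary, the real work having been carried out in Theorems~\ref{thm: main1} and~\ref{thm: main2}, and in the derivation of Theorem~\ref{Thm: foliationmain} in the previous subsection.
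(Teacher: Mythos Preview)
Your proposal is correct and matches the paper's approach exactly: the paper simply states that Theorems~\ref{Thm: foliationmain} and~\ref{Thm: distributionGalgebra} follow by ``Applying Theorems~\ref{thm: main1} and~\ref{thm: main2}'' to the Lie pair $(T_M,T_F)$, without further detail. If anything, you have supplied more justification than the paper itself, which offers no separate proof for this corollary.
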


\begin{remark}
According to Proposition~\ref{Prop: main1},
a choice of a complementary subbundle to $T_F$ in $T_M$
induces an $L_\infty$ algebroid structure on
$T_F[1]\oplus N_F \to T_F[1]$.
In terms of purely algebraic term, this amounts to saying that the pair
$(\sections{\Lambda^\bullet T_F^\vee},\sections{\Lambda^\bullet T_F^\vee \otimes N_F})$
is an $LR_\infty$ algebra. This result is due to Vitagliano \cite{MR3277952}.
\end{remark}

\appendix

\section{$L_\infty$ algebroids and dg manifolds}

We recall some standard notions and results which we used throughout
the paper. We mainly follow the conventions of Bruce \cite{MR2840338}, Voronov \cite{MR2163405}, and Lada--Markl \cite{MR1327129}.

The following proposition reveals a close relation between
$L_\infty$ algebroids and dg manifolds. See \cite{MR2840338, arXiv:1808.10049}.	

\begin{proposition}\label{Prop: LinfinityalgebroidandQ}
Let $\cL\to \cM$ be a vector bundle of $\ZZ$-graded manifolds.
Then $\cL$ is an $L_\infty$ algebroid if and only if $\cL[1]$ is a dg manifold whose homological vector field $Q$ is tangent to the zero section $\cM \xhookrightarrow{0} \cL[1]$.
\end{proposition}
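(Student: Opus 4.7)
The plan is to set up the standard dictionary between fiberwise polynomials on $\cL[1]$ and multilinear maps out of $\sections{\cL}$, and then match the weight decomposition of a homological vector field against the structure maps $(\lambda_l,\rho_{l-1})_{l\geq 1}$ of an $L_\infty$ algebroid, following Voronov \cite{MR2163405} and Bruce \cite{MR2840338}. First I would identify $\cinf{\cL[1]} \cong \sections{\hat S(\cL^\vee[-1])}$ and declare the \emph{weight} of an element of $\sections{S^p(\cL^\vee[-1])}$ to be $p$. Fiber-linear tensors $\rho_l\colon \Lambda^l \cL \to T_\cM$ of degree $(1-l)$ correspond, by duality, to $\cinf{\cM}$-linear maps $\Omega^1(\cM) \to \sections{S^l(\cL^\vee[-1])}$ of the appropriate degree, and brackets $\lambda_l\colon \Lambda^l\sections{\cL} \to \sections{\cL}$ of degree $(2-l)$ correspond to $\cinf{\cM}$-linear maps $\sections{\cL^\vee[-1]} \to \sections{S^l(\cL^\vee[-1])}$ of degree $+1$. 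Assembling these gives a degree $(+1)$ derivation of $\cinf{\cL[1]}$, i.e.\ a vector field $Q$, whose weight-$(l-1)$ component encodes exactly the pair $(\lambda_l,\rho_{l-1})$.

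Next I would explain why tangency to the zero section matches the absence of a weight $(-1)$ component. In local coordinates $(x^i,\xi^a)$ on $\cL[1]$ with $\xi^a$ fiber coordinates, the ideal $\mathcal I$ of $\cM\hookrightarrow\cL[1]$ is generated by the $\xi^a$, so $Q(\mathcal I)\subset\mathcal I$ forbids any pure-base piece $P^a(x)\partial_{\xi^a}$ in $Q$; these are precisely the weight $(-1)$ derivations. Conversely, any $Q=\sum_{k\geq 0}Q_k$ with only non-negative weights is tangent to $\cM$. This shows that tangency to the zero section is equivalent to having no $\lambda_0$ (a would-be distinguished element of $\sections{\cL}$), which is what the $L_\infty$ algebroid definition assumes.

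Then I would unpack the cohomological condition $Q^2=0$ by weight. The weight-$k$ component of $Q^2$ is a finite sum over pairs $(i,j)$ with $i+j=k$ of $[Q_i,Q_j]$, and splitting each $Q_m$ into its $\partial_\xi$ part (the coderivation piece $\lambda_{m+1}$) and its $\partial_x$ part (the anchor piece $\rho_m$) yields exactly (i) the generalized Jacobi identities for the $\lambda_l$, (ii) the $L_\infty$-morphism condition $(\lambda_l)\to \XXX(\cM)$ for the $\rho_l$, and (iii) the compatibility \eqref{pbracketwithf} of $\lambda_l$ with the algebra structure on $\cinf{\cM}$. Indeed, the Leibniz rule \eqref{pbracketwithf} is automatic once one requires $Q$ to be a derivation of $\cinf{\cL[1]}$, because insertion of $f\in\cinf{\cM}$ is computed by the Leibniz rule on fiber-linear generators. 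Conversely, the $L_\infty$ algebroid axioms, transcribed into coderivation language via d\'ecalage, reassemble into a degree $(+1)$ derivation on $\cinf{\cL[1]}$ squaring to zero.

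The routine but delicate step is the bookkeeping of Koszul signs when one passes back and forth between the antisymmetric brackets $\lambda_l$ on $\sections{\cL}$ and the symmetric-algebra picture on $\cL^\vee[-1]$; this is exactly the d\'ecalage already used in the paper (cf.\ the passage between Definitions \ref{def:derivedpoisson} and \ref{Defn: kshiftedPinfinityalgebra}). The main obstacle, therefore, is not conceptual but notational: one must verify that the weight decomposition of $Q^2=0$ reproduces the compatibility axioms with the correct signs. Once this is checked in each weight, the correspondence between dg structures on $\cL[1]$ tangent to $\cM$ and $L_\infty$ algebroid structures on $\cL\to\cM$ is established in both directions.
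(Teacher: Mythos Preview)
Your proposal is correct, but it takes a genuinely different route from the paper's proof. You work directly with the weight decomposition of a degree $(+1)$ vector field on $\cL[1]$ and match the weight-$(l-1)$ component with the pair $(\lambda_l,\rho_{l-1})$, verifying the axioms of Definition~\ref{Defn: Linfinityalgebroid} by unpacking $Q^2=0$ weight by weight. The paper instead passes through Voronov's derived-bracket machinery: it first reformulates the $L_\infty$ algebroid axioms as an $L_\infty$ algebra structure on $\sections{\cL}\oplus\cinf{\cM}[k]$ with three side conditions (Lemma~\ref{Lem: Linftyaldequivdefn}), then realizes this $L_\infty$ algebra as the derived brackets associated with the V-data $\big(\diffopoo{\cL[1]},\,\sections{\cL[1]}\oplus\cinf{\cM},\,P,\,Q\big)$, invoking Voronov's theorem \cite{MR2163405} to get the generalized Jacobi identities for free. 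Tangency to the zero section is encoded as $Q\in\ker P$ via a separate lemma, rather than as the absence of a weight $(-1)$ component in coordinates. Your argument is more elementary and self-contained, essentially the Va\u{\i}ntrob viewpoint; the paper's approach has the advantage that it yields the explicit derived-bracket formulas \eqref{Eqt: Qtoanchors}--\eqref{Eqt: Qtobrackets} for $\rho_l$ and $\lambda_l$ in terms of $Q$, which are used verbatim later (e.g.\ in the analysis of the $L_\infty$ algebroid $A[1]\oplus B\to A[1]$). One small presentational point: in your third paragraph you list the compatibility \eqref{pbracketwithf} among the consequences of $Q^2=0$, then immediately note it actually follows from $Q$ being a derivation; it would be cleaner to separate that out from the start.
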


The rest of the section is devoted to the proof of this proposition, where
the set-up was used in the paper. We first start with the following

\begin{lemma}
\label{Lem: Linftyaldequivdefn}
Let $\cL\to\cM$ be a vector bundle of $\ZZ$-graded manifolds,
and let $k$ be a fixed integer.
An $L_\infty$ algebroid structure on $\cL\to\cM$ is equivalent to an
$L_\infty$ algebra structure on the $\kk$-vector space
$\sections{\cL}\oplus\cinf{\cM}[k]$
with structure maps \[ \barlambda_l: \Lambda^l\big(\sections{\cL}\oplus\cinf{\cM}[k]\big) \to \big(\sections{\cL}\oplus\cinf{\cM}[k]\big)[2-l] \]
satisfying the Leibniz rule
\begin{multline}\label{pbracketwithf2}
\barlambda_l({w_1,w_2,\cdots,w_{l-1},fw_l}) = \barlambda_l({w_1,w_2,\cdots,w_{l-1}, f})w_l \\
+\minuspower{(l+\degree{w_1}+\cdots+\degree{w_{l-1}}) \degree{f}} f \barlambda_l({w_1,w_2,\cdots,w_{l-1},w_l})
,\end{multline}
for all $w_1,\cdots,w_l\in\sections{\cL}\oplus\cinf{\cM}[k]$,
and $f\in\cinf{\cM}$,
and the following conditions:
\begin{enumerate}
\item $\sections{\cL}$ is an $L_\infty$ subalgebra of
$\sections{\cL}\oplus\cinf{\cM}[k]$;
\item $\cinf{\cM}[k]$ is an $L_\infty$ ideal of $\sections{\cL}\oplus\cinf{\cM}[k]$,
i.e.
\ $\barlambda_l(w_1,w_2,\cdots,w_l)\in\cinf{\cM}[k]$
if at least one of the arguments $w_1,w_2,\dots,w_l$ is in $\cinf{\cM}[k]$; and
\item $\cinf{\cM}[k]$ is abelian,
i.e.\ $\barlambda_l(w_1,w_2,\cdots,w_l)=0$ if at least two of the arguments
$w_1,w_2,\dots,w_l$ are in $\cinf{\cM}[k]$.
\end{enumerate}
\end{lemma}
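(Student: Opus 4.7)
The plan is to construct an explicit bijection between $L_\infty$ algebroid data $(\lambda_l,\rho_l)$ on $\cL\to\cM$ and extended structures $(\barlambda_l)$ on $V := \sections{\cL}\oplus\cinf{\cM}[k]$ satisfying (1)-(3), then verify that the defining axioms correspond. In the forward direction, I define $\barlambda_l$ case-by-case according to how many arguments lie in $\cinf{\cM}[k]$: on pure-section arguments, $\barlambda_l := \lambda_l$; with exactly one function argument $f$, graded-symmetrically placed into the final slot, $\barlambda_l(a_1,\ldots,a_{l-1},f) := \rho_{l-1}(a_1,\ldots,a_{l-1})(f)$; with two or more function arguments, $\barlambda_l := 0$. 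Conditions (1)-(3) hold by construction, and the Leibniz rule \eqref{pbracketwithf2} reduces in each case either to \eqref{pbracketwithf}, to the $\cinf{\cM}$-linearity of the bundle map $\rho_{l-1}$, to the derivation property of the vector field $\rho_{l-1}(a_1,\ldots,a_{l-1})$ on $\cinf{\cM}$, or to the trivial identity $0=0$.

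The heart of the proof is the verification of the higher $L_\infty$ Jacobi identities for $(\barlambda_l)$, which I would carry out by stratifying them according to the number of function arguments. Identities involving only sections restate the $L_\infty$-algebra relations for $(\lambda_l)$ on $\sections{\cL}$. Identities with exactly one function argument $f$ unfold, using the definition above, into precisely the compatibilities that make $(\rho_l)$ an $L_\infty$-morphism from $\sections{\cL}$ to $\XXa(\cM)$, because every nested $\barlambda_p(\ldots,\barlambda_q(\ldots),\ldots)$ containing $f$ becomes either an anchor acting on an inner bracket of sections or a nested composition of anchors acting on $f$, matching the standard formulas for $L_\infty$-morphism compatibility. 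Identities with two or more function arguments are automatic: the ideal property forces every term to involve either an outer $\barlambda$ or an inner $\barlambda$ with at least two function arguments, which vanishes by condition (3).

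The reverse direction runs the same dictionary backwards: from $(\barlambda_l)$ satisfying (1)-(3) and \eqref{pbracketwithf2}, define $\lambda_l := \barlambda_l|_{\sections{\cL}}$ and $\rho_l(a_1,\ldots,a_l)(f) := \barlambda_{l+1}(a_1,\ldots,a_l,f)$. The Leibniz rule in the function slot makes $\rho_l(a_1,\ldots,a_l)$ a derivation of $\cinf{\cM}$, hence a vector field; the Leibniz rule in a section slot, together with the vanishing of $\barlambda_l$ on two functions, makes $\rho_l$ $\cinf{\cM}$-multilinear, hence a bundle map $\Lambda^l\cL \to T_\cM$. The same stratification of Jacobi identities then yields the $L_\infty$-algebra axioms for $(\lambda_l)$ and the $L_\infty$-morphism axioms for $(\rho_l)$. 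The main obstacle will be pure sign bookkeeping: although the Koszul signs in \eqref{pbracketwithf} and \eqref{pbracketwithf2} coincide in form, one must carefully track d\'ecalage when permuting the function argument across other slots and confirm that the unfolded expressions reproduce the sign conventions built into ``$L_\infty$-morphism to $\XXa(\cM)$''; the shift $k$ plays no conceptual role in the equivalence, affecting only degree bookkeeping that falls out once the case split is performed.
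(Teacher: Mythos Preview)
Your proposal is correct and follows essentially the same approach as the paper: the paper defines $\barlambda_l$ by exactly the same three-case dictionary you describe, asserts that verifying the required properties is ``straightforward'', and says the converse is proved ``by going backwards''. Your outline simply fills in the details the paper leaves implicit, in particular the stratification of the generalized Jacobi identities by number of function arguments and the identification of the one-function case with the $L_\infty$-morphism relations for the anchors.
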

\begin{proof}
Assume that $\cL$ is an $L_\infty$ algebroid with
multi-brackets $(\lambda_l)_{l\geq 1}$ and multi-anchor maps
$(\rho_l)_{l\geq 0}$
as in Definition \ref{Defn: Linfinityalgebroid}.
Define a sequence $(\barlambda_l)_{l\geq 1}$ of $\kk$-multilinear maps
\[ \barlambda_l: \Lambda^l\big(\sections{\cL}\oplus\cinf{\cM}[k]\big) \to \big(\sections{\cL}\oplus\cinf{\cM}[k]\big)[2-l] \]
by the following relations:
\begin{itemize}
\item $\barlambda_l(x_1,\cdots,x_l)=0$ if at least two of the arguments
$x_1,\dots,x_l$ are in $\cinf{\cM}[k]$;
\item $\barlambda_l(a_1,\cdots,a_l)=\lambda_l(a_1,\cdots,a_l)$, for all $a_1,\cdots,a_l\in\sections{\cL}$; and
\item $\barlambda_{l+1}(a_1,\cdots,a_l,f)=\rho_{l}(a_1,\cdots,a_l)f$,
for all $a_1,\cdots,a_l\in\sections{\cL}$ and $f\in\cinf{\cM}[k]$.
\end{itemize}
It is straightforward to verify that $(\barlambda_l)_{l\geq 1}$
satisfy all the required properties.

The converse can be proved by going backwards.
\end{proof}

Given a vector bundle $\cE\xto{\pi}\cM$ of $\ZZ$-graded manifolds,
consider the graded Lie algebra $\diffopoo{\cE}$ of first-order differential operators on $\cE$.
It can be identified to $\XXa(\cE)\oplus C^\infty(\cE)$ in a canonical way.
Since $C^\infty(\cE) \cong \sections{\hat{S}(\cE\dual)}$, the contraction
operator $\contraction{s}$ by
a section $s\in\sections{\cE}$ defines a derivation of $C^\infty(\cE)$,
i.e.\ a vector field on $\cE$.
The inclusion $ \sections{\cE}\oplus C^\infty(\cM)\into\XXa(\cE)\oplus
C^\infty(\cE)$ sending $ s+f$ to $ \contraction{s}+\pi^*(f)$
embeds $\sections{\cE}\oplus C^\infty(\cM)$ into an abelian Lie subalgebra of $\diffopoo{\cE}$.
We proceed to define a projection $P: \diffopoo{\cE}\onto\sections{\cE}\oplus
C^\infty(\cM)$.
Given a vector field $X\in\XXa(\cE)$, consider the composition
\[ \sections{\cE\dual}\into\sections{\hat{S}(\cE\dual)}\cong C^\infty(\cE)\xto{X}C^\infty(\cE)\xto{0^*}C^\infty(\cM) ,\]
where $0^*$ denotes the pullback of functions through the zero section of the
vector bundle $\cM \xhookrightarrow{0} \cL$.
There exists a unique $X^\uparrow\in\sections{\cE}$ such that $\pairing{\xi}{X^\uparrow} = 0^*\big(X(\xi)\big)$, for all $\xi\in\sections{\cE\dual}$.
Define $P: \diffopoo{\cE}\onto\sections{\cE}\oplus C^\infty(\cM)$ by
$P(X+f)=X^\uparrow+0^*(f)$, for all $X\in\XXa(\cE)$ and $f\in C^\infty(\cE)$.
Note that the projection operator $P$ satisfies
\begin{equation}\label{Eqt: Pcondition}
P\big(\lie{x}{y}\big) =
P\big(\lie{ P(x)}{y}+\lie{x}{ P(y)}\big),
\quad \forall x,y\in \diffopoo{\cE}.
\end{equation}

The following lemma is easily verified, and is left to the reader.

\begin{lemma}\label{snake}
For any $Q\in\XX (\cE)$, the following assertions are equivalent.
\begin{enumerate}
\item The vector field $Q$ is tangent to the zero section of $\cE\xto{\pi}\cM$.
\item There exists a unique vector field $\Xi$ on $\cM$ such that the diagram
\[ \begin{tikzcd}
C^\infty(\cE) \arrow[r, "Q"] \arrow[d, "0^*"'] & C^\infty(\cE) \arrow[d, "0^*"] \\
C^\infty(\cM) \arrow[r, "\Xi"'] & C^\infty(\cM)
\end{tikzcd} \]
commutes.
\item The ideal $\ker(0^*)$ of $C^\infty(\cE)$ is $Q$-stable.
\item $Q\in\ker( P)$
\end{enumerate}
\end{lemma}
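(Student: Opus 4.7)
The plan is to establish $(1) \Leftrightarrow (3) \Leftrightarrow (2)$ first, all three being equivalent to the statement that $Q$ preserves the ideal $I := \ker(0^*)$ of $C^\infty(\cE)$, and then to handle $(1) \Leftrightarrow (4)$ by unwinding the explicit form of the projection $P$.

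For $(1) \Leftrightarrow (3)$, I would invoke the standard fact, valid also in the graded setting, that a vector field is tangent to a submanifold precisely when it sends the vanishing ideal of that submanifold into itself; here the submanifold is the zero section and the vanishing ideal is $I = \ker(0^*)$. For $(3) \Leftrightarrow (2)$, if $Q(I) \subseteq I$ then $Q$ descends to a well-defined derivation on the quotient $C^\infty(\cE)/I \cong C^\infty(\cM)$, and this derivation is by construction the unique vector field $\Xi$ making the square commute. Conversely, given such a $\Xi$, if $f \in I$ then $0^*(Q f) = \Xi(0^* f) = 0$, so $Q f \in I$.

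For $(1) \Leftrightarrow (4)$, I would use the definition of $P$ recalled just before the lemma. The subspace $\sections{\cE^\vee}$ sits in $C^\infty(\cE) \cong \sections{\hat{S}(\cE^\vee)}$ as the fibrewise-linear functions, and $Q^\uparrow = P(Q)$ is characterized by $\pairing{\xi}{Q^\uparrow} = 0^*(Q(\xi))$ for $\xi \in \sections{\cE^\vee}$. Hence $P(Q) = 0$ iff $Q(\xi) \in I$ for every $\xi \in \sections{\cE^\vee}$. Since $I$ is the ideal generated by $\sections{\cE^\vee}$, and $Q$ is a graded derivation, the Leibniz rule promotes this pointwise condition to $Q(I) \subseteq I$, which is condition (3), and the other direction is trivial.

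The only mildly delicate point is the ideal-generation claim in the graded setting: one must verify, using local coordinates on $\cE$ arising from a trivialisation and a local frame of $\cE^\vee$, that every element of $I$ lies in the $C^\infty(\cE)$-span of $\sections{\cE^\vee}$. This follows from the identification $C^\infty(\cE) \cong \sections{\hat{S}(\cE^\vee)}$, under which $I$ corresponds to $\sections{\hat{S}^{\geqslant 1}(\cE^\vee)}$. No further obstacle is expected; the whole argument is essentially bookkeeping around the zero-section ideal, which matches the author's remark that the lemma is easily verified.
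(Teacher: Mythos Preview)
The paper does not actually supply a proof of this lemma; it states that it ``is easily verified, and is left to the reader.'' Your argument is correct and is exactly the kind of verification the authors had in mind: the equivalences $(1)\Leftrightarrow(2)\Leftrightarrow(3)$ are the standard ideal-theoretic characterisation of tangency to a closed submanifold (here the zero section with vanishing ideal $I=\ker(0^*)\cong\sections{\hat S^{\geqslant 1}(\cE^\vee)}$), and your treatment of $(3)\Leftrightarrow(4)$ correctly unwinds the definition of $P$---since $P(Q)=Q^\uparrow$ is determined by $\langle\xi,Q^\uparrow\rangle=0^*(Q(\xi))$, vanishing of $P(Q)$ is exactly the condition $Q(\sections{\cE^\vee})\subset I$, which by the Leibniz rule and the fact that $I$ is generated by $\sections{\cE^\vee}$ is equivalent to $Q(I)\subset I$.
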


\begin{proof}[Proof of Proposition~\ref{Prop: LinfinityalgebroidandQ}]
Consider the vector bundle $\cE\xto{\pi}\cM$ of $\ZZ$-graded manifolds, where
$\cE=\cL[1]$.
Assume that $Q$ is a homological vector field on $\cL[1]$ tangent to the zero section of $\cL[1]\xto{\pi}\cM$.
According to Lemma \ref{snake}, we have $Q\in\ker( P)$.

The graded Lie algebra $\mathfrak{A}=\diffopoo{\cL[1]}$, its abelian
Lie subalgebra $\mathfrak{a}= \sections{\cL[1]}\oplus C^\infty(\cM) $,
the projection $ P: \mathfrak{A}\to \mathfrak{a}$ of $\diffopoo{\cL[1]}$ onto $ \sections{\cL[1]}\oplus C^\infty(\cM) $,
and together with the vector field $Q\in\ker( P)$
constitute an $L_\infty[1]$ algebra Voronov data \cite[Theorem 1 and Corollary 1]{MR2163405}.
The multibrackets $(\mu_l)_{l\geq 1}$ are given
by a sequence of derived brackets:
\begin{equation}\label{Eqt: VoronovtoLinftyone}
\mu_l(z_1,z_2,\cdots,z_l) = P \big(\lie{\lie{\lie{\lie{Q}{z_1}}{z_2}}{\cdots}}{z_l}\big)
,\end{equation}
for all $z_1,z_2,\dots,z_l\in\sections{\cL[1]}\oplus C^\infty(\cM)$.

Applying the d\'ecalage isomorphism, we obtain an
$L_\infty$ algebra on $\sections{\cL}\oplus C^\infty(\cM)[-1]$ with multibrackets $(\barlambda_l)_{l\geq 1}$.
The multibrackets $(\mu_l)_{l\geq 1}$ and $(\barlambda_l)_{l\geq 1}$ are related as follows:
\[ \barlambda_l(w_1,w_2,\cdots,w_l) =\minuspower{\star} \mu_l(w_1,w_2,\cdots,w_l) ,\]
where $\star=(l-1)\abs{w_{ 1}}+(l-2)\abs{w_{2}}+\cdots+\abs{w_{ {l-1}}}$
for all homogeneous $w_1,w_2,\dots,w_l\in\sections{\cL}\oplus
C^\infty(\cM)[-1]$.

It is straightforward to verify that the $L_\infty$ algebra structure $(\barlambda_l)_{l\geq 1}$ on $\sections{\cL}\oplus C^\infty(\cM)[-1]$
satisfies the four conditions listed in Lemma \ref{Lem: Linftyaldequivdefn}.
Therefore, $\cL\to\cM$ is an $L_\infty$ algebroid.
Its multi-anchor maps $\rho_l: \Lambda^l {\cL}\to T_\cM$ (with $l\geq 0$)
and multi-brackets $\lambda_l: \Lambda^l\sections{\cL}\to
\sections{\cL}$ (with $l\geq 1$) are defined by the relations:
\begin{gather}
\rho_l(a_1,a_2,\cdots,a_l) f= \minuspower{\flat } 0^*\big( \lie{\lie{\lie{\lie{Q}{\contraction{a_1}}}{\contraction{a_2}}}{\cdots}}{\contraction{a_l}}
(\pi^* f) \big),
\label{Eqt: Qtoanchors}
\end{gather}
where $\flat=l\abs{a_{ 1}}+(l-1)\abs{a_{2}}+\cdots+\abs{a_{ {l}}}$
and
\begin{gather}
\pairing{\lambda_l(a_1,a_2,\cdots,a_l)}{\xi} = \minuspower{\sharp } 0^*\big( \lie{\lie{\lie{\lie{Q}{\contraction{a_1}}}{\contraction{a_2}}}{\cdots}}{\contraction{a_l}} (\xi) \big)
\label{Eqt: Qtobrackets}
\end{gather}
where $\sharp=(l-1)\abs{a_{ 1}}+(l-2)\abs{a_{2}}+\cdots+\abs{a_{ {l-1}}}$
for all $\xi\in\sections{\cL^\vee},a_1,a_2,\dots,a_l\in\sections{\cL}$ and $f\in C^\infty(\cM)$.

Conversely, given an $L_\infty$ algebroid $\cL\to\cM$ with multi-anchors
$(\rho_l)_{l\geq 0}$ and multibrackets $(\lambda_l)_{l\geq 1}$,
one can recover the corresponding homological vector field $Q$ on $\cL[1]$
satisfying the desired properties.

The algebra $C^\infty(\cL[1])$ admits the direct product decomposition
\[ C^\infty(\cL[1]) = \prod_{k=0}^\infty \sections{S^k(\cL\dual[-1])} .\]
We will refer to $\sections{S^k(\cL\dual[-1])}$ as the
weight $k$ component of $C^\infty(\cL[1])$.
Note that $\pi^* C^\infty(\cM)$ (the component of weight $0$) and $\sections{\cL\dual[-1]}$ (the component of weight $1$)
generate the associative algebra $C^\infty(\cL[1])$.
A vector field $Q$ on $\cL[1]$ is necessarily of the form
$Q=\sum_{l=-1}^\infty D_l$,
where $D_l$ is
a derivation on $C^\infty(\cL[1])$ of weight $l$:
\[ D_l: \sections{\hat{S}^{\bullet}(\cL\dual[-1])}\to\sections{\hat{S}^{\bullet+l}(\cL\dual[-1])} .\]

Since $Q$ is tangent to $\cM$, i.e.\ we want $Q\in\ker( P)$,
its weight $(-1)$ component $D_{-1}$ must vanish.
Choose a local coordinate chart $(x^j)_{j\in J}$ on $\cM$; a local frame $(s_k)_{k\in K}$ for $\cL[1]$; and the dual local frame $(\xi^k)_{k\in K}$ for $\cL\dual[-1]$,
the derivation $D_l$ can be written as
\[ D_l=\frac{1}{l!}\sum_{j\in J} \xi^{i_l}\cdots\xi^{i_{2}}\xi^{i_1} \pi^*(\tilde{Q}^j_{i_1,i_2,\cdots,i_l}) \frac{\partial}{\partial x^j}
+ \frac{1}{(l+1)!}\sum_{k\in K} \xi^{i_l}\cdots\xi^{i_1}\xi^{i_0} \pi^*(Q^k_{i_0,i_1,\cdots,i_l}) \frac{\partial}{\partial \xi^k} ,\]
where
\begin{gather*}
\tilde{Q}^j_{i_1,i_2,\cdots,i_l} = \minuspower{\diamond} \rho_l(s_{i_1},s_{i_2},\cdots,s_{i_l}) x_j
\end{gather*}
with $\diamond=(l-1)\abs{s_{i_1}}+(l-2)\abs{s_{i_2}}+\cdots+\abs{s_{i_{l-1}}}$,
and
\[ Q^k_{i_0,i_1,\cdots,i_l} = \minuspower{\dag} \pairing{\lambda_{l+1}(s_{i_0},s_{i_1},\cdots,s_{i_l})}{\xi^k} \]
with $\dag=(l+1)\abs{s_{i_0}}+l\abs{s_{i_1}}+\cdots+\abs{s_{i_{l}}}$.
One checks that $D_l$, and therefore $Q$, is well defined. In summary,
the multi-anchors and multi-brackets of the $L_\infty$ algebroid $\cL\to\cM$
determine through Equations \eqref{Eqt: Qtoanchors} and \eqref{Eqt: Qtobrackets}
a vector field $Q$ of degree $+1$ on $\cL[1]$, which is
tangent to the zero section $\cM$.

The $L_\infty[1]$ algebra structure $(\mu_l)_{l\geq 1}$ on $\sections{\cL[1]}\oplus C^\infty(\cM)$ determined by the $L_\infty$ algebroid $\cL\to\cM$
as per Lemma \ref{Lem: Linftyaldequivdefn} is related to the vector field $Q$ through Equation \eqref{Eqt: VoronovtoLinftyone}.
It follows from the generalized Jacobi identity, Equation \eqref{Eqt: VoronovtoLinftyone}, and repetitive use of Equation \eqref{Eqt: Pcondition} that
\[ 0 = \sum_{p+r=l+1}(\mu_p\circ\mu_r)(z_1,\cdots,z_l) = P \big(\lie{\lie{\lie{\lie{\lie{Q}{Q}}{z_1}}{z_2}}{\cdots}}{z_l}\big) ,\]
for all $z_1,z_2,\dots,z_l\in\sections{\cL[1]}\oplus C^\infty(\cM)$.
Therefore $\lie{Q}{Q}=0$, i.e. $Q$ is homological.
\end{proof}

\section{Shifted Poisson algebras}
\begin{definition}
A degree $k$ Poisson algebra is a $\ZZ$-graded commutative and associative algebra $\mathfrak{R}$ with a degree $k$ Poisson bracket,
denoted by $[\argument,\argument]$ (i.e.\ $[\mathfrak{R}^i,\mathfrak{R}^j]\subset \mathfrak{R}^{i+j+k}$), satisfying 

\begin{enumerate}
\item $[a,b]=-\minuspower{ \abs{a}^{[k]} \abs{b}^{[k]} }[b,a]$,
\item $[a,[b,c]]=[[a,b],c]+\minuspower{ \abs{a}^{[k]} \abs{b}^{[k]}}[b,[a,c]]$,
\item $[a,bc]=[a,b]c+\minuspower{ \abs{a}^{[k]}\abs{b}}b[a,c]$,
\end{enumerate}
for all homogeneous elements $a,b,c\in\mathfrak{R}$.
\end{definition}

Note that Conditions (1) and (2) are equivalent to that $\mathfrak{R}[-k]$ is a
$\ZZ$-graded Lie algebra, while (3) means that the Lie bracket is a biderivation.

Also note that degree $0$ Poisson algebras are usual Poisson algebras, and degree $(+1)$ Poisson algebras are Gerstenhaber algebras.
In the meantime, one can obtain a Poisson algebra of degree $k$ out of a graded Lie algebra as indicated in the following
\begin{proposition}\label{Prop: LtoSLk}
Let $\frkg$ be a graded Lie algebra.
Then the symmetric product $\Sbullet(\frkg[k])$ (similarly $\hat{S} (\frkg[k])$) admits a unique degree $k$ Poisson algebra structure
which extends the original Lie bracket on $\frkg$.
\end{proposition}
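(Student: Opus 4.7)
The plan is to define the degree $k$ Poisson bracket on $\Sbullet(\frkg[k])$ by extending the graded Lie bracket of $\frkg$ through repeated application of the Leibniz rule, and then to verify graded skew-symmetry and the Jacobi identity. The degrees match: for $x,y\in\frkg\subset\frkg[k]\subset\Sbullet(\frkg[k])$, the Lie bracket $[x,y]_\frkg\in\frkg^{\abs{x}+\abs{y}}$ sits inside $\Sbullet(\frkg[k])$ in degree $\abs{x}+\abs{y}-k$, which matches the output degree of a degree $k$ bracket on inputs of degrees $\abs{x}-k,\abs{y}-k$. Under the décalage convention $\abs{x}^{[k]}=\abs{x}+k$, the graded skew-symmetry and Jacobi identity for the desired Poisson bracket on generators reduce exactly to the corresponding identities for $\frkg$.

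Uniqueness is immediate: if $\binarybracket{\argument}{\argument}$ is any degree $k$ Poisson bracket on $\Sbullet(\frkg[k])$ extending the Lie bracket of $\frkg$, the biderivation axiom determines $\binarybracket{a}{b}$ for all $a,b\in\Sbullet(\frkg[k])$ from its values on generators by induction on the total weight $\mathrm{wt}(a)+\mathrm{wt}(b)$, where $\mathrm{wt}\bigl(S^n(\frkg[k])\bigr)=n$. For existence, I first define $\binarybracket{a}{y_1\odot\cdots\odot y_n}$ for $a\in\frkg[k]$ recursively by
\[ \binarybracket{a}{y_1\odot\cdots\odot y_n}=\sum_{i=1}^{n}(-1)^{\epsilon_i}\,y_1\odot\cdots\odot\binarybracket{a}{y_i}\odot\cdots\odot y_n, \]
with the Koszul sign $\epsilon_i$ dictated by the graded Leibniz rule; well-definedness with respect to the graded commutativity of $\Sbullet(\frkg[k])$ is a routine sign verification. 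I then extend to arbitrary left argument by declaring graded skew-symmetry in the shifted sense, or equivalently by iterating the Leibniz rule on the left, and verify that these two prescriptions coincide.

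The main obstacle is the graded Jacobi identity
\[ \binarybracket{a}{\binarybracket{b}{c}}=\binarybracket{\binarybracket{a}{b}}{c}+(-1)^{\abs{a}^{[k]}\abs{b}^{[k]}}\binarybracket{b}{\binarybracket{a}{c}}. \]
I would establish it by induction on $\mathrm{wt}(a)+\mathrm{wt}(b)+\mathrm{wt}(c)$: since each side is a biderivation in every slot, applying the Leibniz rule on the highest-weight argument reduces a Jacobi identity at total weight $N$ to several Jacobi identities at total weight $N-1$, modulo terms that cancel by the inductive hypothesis and by graded commutativity. The base case, in which $a,b,c\in\frkg[k]$, is precisely the graded Jacobi identity for the Lie bracket of $\frkg$ after décalage of signs. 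The careful bookkeeping of Koszul signs in this induction is the delicate step, but the underlying content is purely the original Jacobi identity of $\frkg$.

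Finally, since $\binarybracket{S^p(\frkg[k])}{S^q(\frkg[k])}\subset S^{p+q-1}(\frkg[k])$, the bracket is compatible with the $\mathfrak{m}$-adic filtration and extends continuously to $\hat{\Sbullet}(\frkg[k])$, yielding the analogous degree $k$ Poisson algebra structure there; uniqueness in the completed case follows at once, as any two such extensions agree on the dense subalgebra $\Sbullet(\frkg[k])$.
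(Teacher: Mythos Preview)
The paper does not actually give a proof of this proposition; it is stated in Appendix~B as a standard fact and immediately followed by the next section. Your argument is correct and is precisely the standard one: extend the bracket from generators by the Leibniz rule, check well-definedness against graded commutativity, and prove the Jacobi identity by induction on total weight, with the base case being the Jacobi identity in $\frkg$ after d\'ecalage. Your observation that $\binarybracket{S^p(\frkg[k])}{S^q(\frkg[k])}\subset S^{p+q-1}(\frkg[k])$ is exactly what is needed to pass to the $\mathfrak{m}$-adic completion. There is nothing to compare here; you have supplied the proof the paper omits.
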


\section{Shifted polyvector fields}
\label{Appendix: shiftedpoly}

Let $\cM$ be a $\mathbb{Z}$-graded manifold.
A degree $l$ vector field $X\in \XXa(\cM)$ is a derivation
$\cinf{\cM}\xto{X}\cinf{\cM}$ of degree $l$.
The degree of $X$ is denoted by $\abs{X}=l$.

The commutator in $\XXa (\cM)$ is standard:
\[ [X,Y]=X\circ Y-\minuspower{\abs{X}\abs{Y}}Y\circ X ,\]
for all homogeneous $X,Y\in\XXa (\cM)$.
It is obvious that $\XXa (\cM)$ is a left $\cinf{\cM}$-module, and the pair
$(\XXa (\cM),\cinf{\cM})$ forms a $\ZZ$-graded Lie--Rinehart algebra.

Let $n\in\mathbb{Z}$ be a fixed integer. Following Pridham \cite{MR3653066,arXiv:1804.07622}, let $\XX^0_{\poly}(\cM,n)=\cinf{\cM}$, and for each $m\geq 1$,
\[ \XX^{m}_{\poly}(\cM,n) ={S}^m_{\cinf{\cM}}\bigl(\Gamma (\cM; T_\cM \mbox{$[n+1]$}\bigr) .\]
Elements in $\XX ^{m}_{\poly}(\cM,n) $ are called $n$-shifted $m$-polyvector fields on $\cM$.
Then the space \[ \XX ^{\bullet}_{\poly}(\cM,n) =\oplus_{m\geq 0}\XX ^{m}_{\poly}(\cM,n) \]
is called the \emph{$n$-shifted Schouten--Nijenhuis algebra} of $\cM$.
Its completion is denoted by $\hat{\XX}^\bullet_{\poly}(\cM,n)$.
It is simple to see that
\[ \hat{\XX}^\bullet_{\poly}(\cM,n)\cong \cinf{T^\vee_\cM[-n-1]} ,\]
the space of functions on the $(-n-1)$-shifted cotangent bundle $T^\vee_\cM$.

Since $T_\cM$ is a Lie algebroid, 
$T^\vee_\cM$ is a canonical Poisson manifold, which is
in fact symplectic.
According to Proposition \ref{Thm: LinfinitytotshiftedPoisson},
we have the following

\begin{lemma}\label{Lem: AppendixPoissonNatural}
The space $\XX ^{\bullet}_{\poly}(\cM,n) $ admits a degree $(n+1)$
Poisson algebra structure, similarly $\hat{\XX}^\bullet_{\poly}(\cM,n)$.
\end{lemma}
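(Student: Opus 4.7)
The plan is to apply Proposition \ref{Thm: LinfinitytotshiftedPoisson} directly to the tangent bundle $T_\cM \to \cM$, regarded as an $L_\infty$ algebroid concentrated in arity two. First I would observe that $T_\cM$ carries the structure of an ordinary Lie algebroid (with Lie bracket of vector fields as binary bracket $\lambda_2$, the identity map as unary anchor $\rho_1$, and all other multi-brackets and multi-anchors vanishing); in particular, it qualifies as an $L_\infty$ algebroid in the sense of Definition \ref{Defn: Linfinityalgebroid}. Next, setting $k = n+1$ in Proposition \ref{Thm: LinfinitytotshiftedPoisson}, I would conclude that $\sections{\hat{S}(T_\cM[n+1])} \cong \hat{\XX}^\bullet_{\poly}(\cM,n)$ carries a degree $(n+1)$ derived Poisson algebra structure, whose $l$-th bracket has weight $(1-l)$.

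The key observation is then that, because $T_\cM$ is an ordinary (non-$L_\infty$) Lie algebroid, the unary bracket $\lambda_1$ vanishes and all higher brackets $\lambda_l$ with $l \geq 3$ vanish on $\sections{T_\cM}$; by the generating rules \eqref{eq: Opera}--\eqref{eq: Louver} used in the proof of Proposition \ref{Thm: LinfinitytotshiftedPoisson}, these brackets likewise vanish on the whole symmetric algebra $\sections{\hat{S}(T_\cM[n+1])}$. Hence the derived Poisson structure collapses to an honest degree $(n+1)$ Poisson algebra, whose only non-trivial bracket $\lambda_2$ is the unique extension by the graded Leibniz rule of the Lie bracket of vector fields and the anchor action on $\cinf{\cM}$.

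For the uncompleted space $\XX^\bullet_{\poly}(\cM,n) = \Sbullet_{\cinf{\cM}}(\sections{T_\cM[n+1]})$, I would simply note that the bracket constructed above decreases total weight by one, so it restricts to a degree $(n+1)$ Poisson structure there as well. As a sanity check (and an alternative viewpoint), one can identify $\hat{\XX}^\bullet_{\poly}(\cM,n) \cong \cinf{T^\vee_\cM[-n-1]}$ and use the canonical shifted symplectic structure on the shifted cotangent bundle to produce the same Poisson bracket; the two constructions coincide on the generators $\cinf{\cM}$ and $\sections{T_\cM[n+1]}$, hence everywhere by the Leibniz rule. No significant obstacle is expected: the only subtle point is recognizing that the reduction from derived to strict Poisson is automatic from the arity structure of the ordinary Lie algebroid $T_\cM$.
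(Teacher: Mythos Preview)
Your proposal is correct and takes essentially the same approach as the paper: the paper simply notes that $T_\cM$ is a Lie algebroid and invokes Proposition~\ref{Thm: LinfinitytotshiftedPoisson} with $k=n+1$, exactly as you do. Your additional remark that all brackets except $\lambda_2$ vanish (so the derived Poisson structure is strict) and your observation that the bracket restricts to the uncompleted algebra are details the paper leaves implicit, but the argument is the same.
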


This degree $(n+1)$ Poisson bracket is also known as the $n$-shifted 
Schouten--Nijenhuis bracket.
When $n=0$, the space of $0$-shifted polyvector fields $\XX ^{\bullet}_{\poly}(\cM,0) $,
coincides with the usual Schouten--Nijenhuis algebra on $\cM$, 
which is simply denoted by $\XX^\bullet_{\poly}(\cM)$.
When $n=-1$, the space of $(-1)$-shifted polyvector fields
$\XX^{\bullet}_{\poly}(\cM,-1)$ is the Poisson algebra $\mbox{Pol}({T^\vee_\cM})$. 
Its completion $\hat{\XX}^\bullet_{\poly}(\cM,-1)\cong \cinf{T^\vee_\cM}$.

Any element in $\XX ^{m}_{\poly}(\cM,n)$ is a finite sum of
homogeneous elements of the form:
\[ \Pi=\bar{X}_1\odot\bar{X}_2\odot\cdots\odot\bar{X}_m ,\]
where $X_i\in\XXa (\cM)$, and $\bar{X}_i\in\XXa (\cM)[n+1]$ denotes
the corresponding element with shifted degree.
The number $\abs{\Pi}=\abs{X_1}+\cdots+\abs{X_m}$ is called the \emph{pure degree} of $X$, whereas $m$ is called the \emph{weight}.
By \[ \totalabs{\Pi}_{n}=\abs{\Pi}-m(n+1) ,\]
we denote the \emph{total degree} of $\Pi$.
The following lemma provides an alternative description of shifted polyvector fields.

\begin{lemma}
\label{Lem: polyasmultiderivative}
A homogeneous $n$-shifted $m$-polyvector field $\Pi$ on $\cM$
is equivalently to a $m$-ary operation of degree $\abs{\Pi}$
(the pure degree of $\Pi$):
\[ \Pi: {(\cinf{\cM})}^{\otimes m}\to\cinf{\cM} \]
satisfying the following properties:
\begin{itemize}
\item[1)] $\Pi$ is symmetric multilinear on $\cinf{\cM}[-n-1]$:
\[ \Pi(f_1,\cdots,f_{i-1},f_i,f_{i+1},f_{i+2},\cdots,f_m) = \minuspower{\abs{{f}_i}^{[n+1]}\abs{{f}_{i+1}}^{[n+1]}}\Pi(f_1,\cdots,f_{i-1}, f_{i+1},f_{i},f_{i+1},\cdots,f_m) ;\]
\item[2)] $\Pi$ is a derivation of degree $\abs{\Pi}$:
\begin{align*}
& \Pi(f_1,\cdots,f_{m-1},f_mf'_m) \\
=& \Pi(f_1,\cdots,f_{m-1},f_m)f'_m +\minuspower{(\abs{\Pi}+\abs{f_1}+\cdots+\abs{f_{m-1}})\abs{f_m}} f_m\Pi(f_1,\cdots,f_{m-1},f'_m)
.\end{align*}
\end{itemize}
\end{lemma}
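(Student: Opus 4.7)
The proof proceeds by constructing an explicit bijection between the two descriptions of $n$-shifted $m$-polyvector fields. The plan mirrors the well-known unshifted case (where $m$-polyvector fields correspond to alternating $m$-derivations on $\cinf{\cM}$), with the graded/shifted sign conventions from the Notations section carefully tracked.

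For the forward direction, I would send a pure element $\Pi=\bar{X}_1\odot\cdots\odot\bar{X}_m\in S^m_{\cinf{\cM}}(\Gamma(T_\cM[n+1]))$ to the operation
\[
\Pi(f_1,\ldots,f_m)=\sum_{\sigma\in\mathfrak{S}_m}\epsilon_\sigma\, X_{\sigma(1)}(f_1)\,X_{\sigma(2)}(f_2)\cdots X_{\sigma(m)}(f_m),
\]
where $\epsilon_\sigma$ is the Koszul sign that accounts for (a) rearranging $\bar{X}_1,\ldots,\bar{X}_m$ (as elements of the shifted space $\XXa(\cM)[n+1]$) into the order $\bar{X}_{\sigma(1)},\ldots,\bar{X}_{\sigma(m)}$, and (b) moving each $X_{\sigma(i)}$ past $f_1,\ldots,f_{i-1}$ (with the $f_j$ regarded as elements of $\cinf{\cM}[-n-1]$, so that $\bar{X}_i$ and $f_j$ carry matching shifted degrees). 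I would verify that this formula descends to the symmetric product (i.e.\ is invariant under graded-symmetric reshuffling of the $\bar{X}_i$) and is compatible with the $\cinf{\cM}$-linearity of the symmetric tensor product, via a direct sign computation.

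Next I would verify the two claimed properties of the resulting operation. The graded symmetry in conditions (1) is tautological once signs are in place: swapping $f_i$ with $f_{i+1}$ amounts to a relabelling of the summation index $\sigma$ and produces precisely the sign $(-1)^{|f_i|^{[n+1]}|f_{i+1}|^{[n+1]}}$. The Leibniz rule in condition (2) follows immediately from the fact that each $X_{\sigma(i)}$ is an honest derivation on $\cinf{\cM}$, applied in the slot where $f_mf'_m$ appears; one just checks that the accumulated Koszul sign matches the one stated.

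For the reverse direction, I would work locally. In a coordinate chart $(x^j)$ on $\cM$ with dual frame $\partial_j$ for $T_\cM$, the Leibniz rule applied repeatedly in each argument forces any operation $\Pi$ satisfying (1) and (2) to have the local form
\[
\Pi(f_1,\ldots,f_m)=\sum_{j_1,\ldots,j_m}\Pi^{j_1\cdots j_m}\,\partial_{j_1}(f_1)\cdots\partial_{j_m}(f_m),
\]
for unique coefficients $\Pi^{j_1\cdots j_m}\in\cinf{\cM}$; the graded symmetry condition (1) then translates into graded symmetry of these coefficients in the shifted indices, so they assemble into a section of $S^m_{\cinf{\cM}}(\Gamma(T_\cM[n+1]))$. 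A standard patching argument shows that this local reconstruction is independent of the chart, producing a global inverse to the forward map.

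The main obstacle I expect is bookkeeping rather than conceptual: reconciling the Koszul signs coming from the shifted symmetric product $\odot$ on $\XXa(\cM)[n+1]$ with the signs produced by the derivation action of each $X_i$ on $\cinf{\cM}$, and then matching both to the $|f|^{[n+1]}$-graded symmetry stipulated in (1). Once a consistent convention is fixed (for instance, by always converting $\odot$ and input slots to the shifted degree $[-n-1]$ via \emph{d\'ecalage}), both directions of the bijection, and the identification of pure degree with the degree of the multi-derivation, follow in parallel with the classical unshifted proof.
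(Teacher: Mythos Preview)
Your approach is correct and is exactly what the paper has in mind: the paper does not give a proof but simply states that it ``is completely analogous to the usual unshifted polyvector fields on ordinary smooth manifolds,'' and your proposal is precisely that classical argument (explicit multi-derivation from a decomposable polyvector via symmetrization, and local reconstruction via coordinates) with the shifted Koszul signs tracked. There is nothing to add beyond noting that the paper omits the details you have supplied.
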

The proof is omitted as it is completely analogous to the usual unshifted polyvector fields on ordinary smooth manifolds.

Finally, we need a technical lemma
for an explicit formula describing
the $(n+1)$-shifted Poisson bracket in $\XX ^{\bullet}_{\poly}(\cM,n)$.
For any $\Pi\in \XX^p_{\poly}(\cM,n) $ and $\Lambda\in\XX^q_{\poly}(\cM,n)$,
let $\Pi\circ \Lambda$ be the $(p+q-1)$-ary operation
$({\cinf{\mathcal{M}}})^{\otimes{p+q-1}}\to\cinf{\mathcal{M}}$ given by
\begin{eqnarray*}
&& (\Pi\circ \Lambda)(f_1,\cdots,f_{p+q-1}) \\
&=& \sum_{\sigma\in \mathrm{Sh}(p,q-1)}{\epsilon^{[n+1]}}(\sigma)\Pi(\Lambda(f_{\sigma(1)},\cdots,f_{\sigma(q)}),f_{\sigma(q+1)},\cdots,f_{\sigma(p+q-1)})
.\end{eqnarray*}
Here ${\epsilon}^{[n+1]}(\sigma)$ denotes the Koszul sign with respect to
the shifted degrees $\abs{{f}_1}^{[n+1]}$, $\cdots$, $\abs{{f}_{p+q}}^{[n+1]}$.

\begin{lemma}
\label{eq: Recollects}
For any $\Pi\in\XX ^{p}_{\poly}(\cM,n) $ and
$\Lambda\in\XX ^{q}_{\poly}(\cM,n) $, the degree $(n+1)$
Poisson bracket $[\Pi,\Lambda]$ in
$\XX ^{\bullet}_{\poly}(\cM,n)$ as in Lemma \ref{Lem: AppendixPoissonNatural}
coincides with the graded commutator:
\[ [\Pi,\Lambda]=\Pi\circ \Lambda-\minuspower{(\totalabs{\Pi}_{n}+n+1)(\totalabs{\Lambda}_{n}+n+1)} \Lambda\circ \Pi .\]
\end{lemma}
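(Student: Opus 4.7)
The plan is to verify the identity on generators of $\XX^\bullet_{\poly}(\cM, n)$ and then extend by the graded Leibniz rule. As a graded commutative algebra under the symmetric product $\odot$, $\XX^\bullet_{\poly}(\cM, n)$ is generated by the weight-$0$ part $\cinf{\cM}$ and the weight-$1$ part $\sections{T_\cM[n+1]}$. By Lemma~\ref{Lem: AppendixPoissonNatural}, together with Proposition~\ref{Thm: LinfinitytotshiftedPoisson} applied to the Lie algebroid $T_\cM$, the degree $(n+1)$ Poisson bracket on the left-hand side is uniquely characterized by (i) its values on pairs of generators, (ii) graded skew-symmetry, and (iii) being a graded biderivation with respect to $\odot$. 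Thus I would show that the right-hand side, viewed as a binary operation on $\XX^\bullet_{\poly}(\cM, n)$, satisfies the same three properties and agrees with the Poisson bracket on generators.

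Verification on generators is immediate from the multi-derivation interpretation of shifted polyvectors (Lemma~\ref{Lem: polyasmultiderivative}). For $f, g \in \cinf{\cM}$ both sides vanish for arity reasons. For a shifted vector field $X \in \sections{T_\cM[n+1]}$ and a function $f$, the $\circ$-commutator reduces to $X(f)$, matching the canonical Hamiltonian action of $X$ on $T^\vee_\cM[-n-1]$. For two shifted vector fields $X, Y$, it recovers the graded commutator $[X, Y]$ of vector fields, which is exactly the bracket induced by the canonical symplectic structure on $T^\vee_\cM[-n-1]$. Graded skew-symmetry of the $\circ$-commutator is built into the symmetrization $\Pi \circ \Lambda - (\pm) \Lambda \circ \Pi$.

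The key step is to establish the Leibniz rule for the right-hand side. I would expand $\Pi \circ (\Lambda_1 \odot \Lambda_2)$ by partitioning each shuffle according to which of $\Lambda_1, \Lambda_2$ receives each argument, distinguishing the case in which the output fed into the first slot of $\Pi$ comes from $\Lambda_1$ versus $\Lambda_2$. Combined with the analogous expansion of $(\Lambda_1 \odot \Lambda_2) \circ \Pi$, where one instead splits according to which of the two factors contains the first slot whose output goes into $\Pi$, the signed combination telescopes to
\[ [\Pi, \Lambda_1 \odot \Lambda_2]_\circ = [\Pi, \Lambda_1]_\circ \odot \Lambda_2 + \minuspower{(\totalabs{\Pi}_n + n+1)\totalabs{\Lambda_1}_n} \Lambda_1 \odot [\Pi, \Lambda_2]_\circ, \]
where $[\argument,\argument]_\circ$ denotes the $\circ$-commutator on the right-hand side of the claimed formula.

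The principal obstacle will be the careful coordination of two parallel sign conventions: the shuffle Koszul signs $\epsilon^{[n+1]}(\sigma)$ computed with respect to shifted degrees $\abs{f_i}^{[n+1]}$ of test functions, versus the total-degree signs $\minuspower{(\totalabs{\Pi}_n + n+1)(\totalabs{\Lambda}_n + n+1)}$ in the commutator. The bookkeeping identity $\totalabs{\Pi}_n + (n+1) = \abs{\Pi} - (m-1)(n+1)$ for an $m$-polyvector field $\Pi$, together with the observation that the shifted degree of $\Pi$ as a multi-derivation distributes additively across its arguments, is the essential tool that lets the two sign systems be matched and closes the argument.
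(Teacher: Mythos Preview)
The paper states this lemma without proof; it appears at the very end of Appendix~\ref{Appendix: shiftedpoly} and is immediately followed by the Acknowledgments, with no argument given. So there is nothing to compare your proposal against.

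Your strategy is the standard and correct one: the degree $(n+1)$ Poisson bracket of Lemma~\ref{Lem: AppendixPoissonNatural} is, by construction, the unique biderivation of the $\odot$-product extending the Lie bracket of vector fields and the action of vector fields on functions, so it suffices to check that the $\circ$-commutator has these same properties. Your verification on generators is fine. One point you use implicitly but should make explicit is the formula expressing $\Lambda_1\odot\Lambda_2$ as a multi-derivation: under the identification of Lemma~\ref{Lem: polyasmultiderivative}, the $\odot$-product becomes the shuffle product of multi-derivations,
\[
(\Lambda_1\odot\Lambda_2)(f_1,\ldots,f_{q_1+q_2})=\sum_{\sigma\in\shuffle{q_1}{q_2}}\epsilon^{[n+1]}(\sigma)\,\Lambda_1(f_{\sigma(1)},\ldots,f_{\sigma(q_1)})\,\Lambda_2(f_{\sigma(q_1+1)},\ldots,f_{\sigma(q_1+q_2)}),
\]
and this is what drives the ``partitioning each shuffle'' expansion you describe. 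With that formula in hand, the Leibniz identity for the $\circ$-commutator follows by a routine (if sign-heavy) reindexing of shuffles, exactly as you outline. Your remark about the bookkeeping identity $\totalabs{\Pi}_n+(n+1)=\abs{\Pi}-(m-1)(n+1)$ is the right tool for reconciling the two sign conventions.
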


\section*{Acknowledgments}
We would like to thank
Martin Bordemann, Oliver Elchinger,
Camille Laurent-Gengoux,
Pavol \v{S}evera,
Jim Stasheff,
and Luca Vi\-ta\-glia\-no
for fruitful discussions and useful comments.

\bibliography{references}
\end{document}